\newtheorem{claim}{\bf \t}[part]
\newtheorem{Corollary}{Corollary}[part]
\newtheorem{Definition}{Definition}[part]
\newtheorem{Lemma}{Lemma}[part]
\newtheorem{Proposition}{Proposition}[part]
\newtheorem{Remark}{Remark}[part]
\newtheorem{Theorem}{Theorem}[part]
\numberwithin{Assumption}{section} \numberwithin{Corollary}{section}
\numberwithin{Definition}{section} \numberwithin{equation}{section}
\numberwithin{Example}{section} \numberwithin{Lemma}{section}
\numberwithin{Proposition}{section} \numberwithin{Remark}{section}
\numberwithin{Theorem}{section}
\def\t{\rho}
\def\text#1{{\rm #1}}
\begin{document}
	\date{}
	\title{\Large \bf Global well-posedness of 3D inhomogenous incompressible Navier-Stokes equations with density-dependent viscosity}
	\author{\small \textbf{Dongjuan Niu},$^{a,1}$	
		\thanks{The research is partially supported by the key research project of National Natural Science Foundation (Grant No. 11931010). E-mail: djniu@cnu.edu.cn.}\quad
		\textbf{Lu Wang},$^{a,2}$
		\thanks{E-mail: wlu1130@163.com.}}
	
	\maketitle
	\small $^a$ School of Mathematical Sciences, Capital Normal University, Beijing
	100048, P. R. China\\[2mm]

	{\bf Abstract:}  The issue of global well-posedness for the 3D inhomogenous incompressible Navier-Stokes equations was first addressed by Kazhikov \cite{1974K} in 1974. In this manuscript, we obtain its global well-posedness for the system with density-dependent viscosity under the smallness assumption of initial velocity in the critical space $\dot{B}^{-1+\frac 3p}_{p,1}$ with $p\in ]1,9/2]$. To the best of our knowledge, this is the first result about the global well-posedness for which one does not assume any smallness condition on the density when the initial density is far away from vacuum.

    \vskip0.3cm
	
	{\bf Key Words:} Inhomogeneous Navier-Stokes system, Global well-posedness, Time-decay estimates.
	
	{\bf AMS Subject Classification:} 35Q30, 76D03.
	
	\section{Introduction } \setcounter{equation}{0}
	\setcounter{Assumption}{0} \setcounter{Theorem}{0}
	\setcounter{Proposition}{0} \setcounter{Corollary}{0}
	\setcounter{Lemma}{0}
	
	In this paper, we are interested in  the global well-posedness for the following 3D inhomogeneous incompressible Navier-Stokes system:
	\begin{equation}\label{a1}
	\left\{
	\begin{array}{l}
	\partial_{t}\rho+\operatorname{div}(\rho u)=0,\quad (t,x)\in \mathbb{R}^{+}\times\mathbb{R}^{3},                \\
	\partial_{t}(\rho u)+\operatorname{div}(\rho u\otimes u)-\operatorname{div}(2\mu(\rho)\mathbb{D}u)+\nabla\pi=0, \\
	\operatorname{div} u=0,                                                                                         \\
	(\rho,u)|_{t=0}=(\rho_0,u_0),
	\end{array}
	\right.
	\end{equation}
	where $\rho$ and $u=(u_{1}(t,x),u_{2}(t,x),u_{3}(t,x))$ stand for the density and velocity field of the fluid respectively. $\mathbb{D}u=\frac{1}{2}(\partial_{i}u_{j}+\partial_j u_i) $ is the deformation tensor. $\pi=\pi(t,x)$ is a scalar pressure function, and in general, the viscosity coefficient $\mu(\rho)$ is a smooth, positive function on $[0,\infty)$. 
	
	Such a system is usually used to describe a fluid that is incompressible but has a non-constant density due to the complex structure of the flow caused by a mixture or contamination. More information related to the background of (\ref{a1}) can be found in \cite{1996L} and so on.

    There are a lot of literatures devoted to the global well-posedness of \eqref{a1}. Precisely, the global existence of weak solution with finite energy was first investigated by Kazhikov \cite{1974K} in the case of constant viscosity.  Lions \cite{1996L} dealt with the similar result including vaccum for system \eqref{a1} with density-dependent  viscosity. However, even for the two-dimensional case, the uniqueness and regularity of such weak solutions remain open. Later, Desjardins \cite{1997D} proved the global weak solution with higher regularity for the two-dimensional system, provided that the viscosity $\mu(\rho)$ is a small perturbation of a positive constant in $L^{\infty}$-norm. Under the similar assumption, Abidi-Zhang \cite{2015az} generalized this result to strong solutions in $\mathbb{R}^2$. Gui-Zhang \cite{2009GZ} proved the global well-posedness of (\ref{a1}) with initial density satisfying that $\|\rho_0-1\|_{H^{s+1}}$ is sufficiently small for some $s>2$.  Very recently, He-Li-L${\rm \ddot{u}}$ \cite{2021HLL}  obtained the uniquely global solution  of three-dimensional system when the initial density contains vacuum under the small assumption that
	\begin{equation}\label{aa8}
	\|u_0\|_{\dot{H}^{\beta}}\leq \varepsilon_{0}\quad{\rm with}\quad\beta\in ]1/2,1].
	\end{equation}
	More results can be referred to \cite{2004CK,2014HW,2015HW,2019LS,2015Z} and so on.

	As well known,  similar to the classical incompressible Navier-Stokes system, i.e., $\rho=constant$ in $(\ref{a1})$,   the inhomogenous Navier-Stokes system also has the property of scaling invariance. That is, if $(\rho, u)$ solves (\ref{a1}) with initial datum $(\rho_{0},u_{0})$, then for $\forall~\lambda>0$,
	\begin{equation}\label{aa5}
	(\rho,u)_{\lambda}(t,x)=(\rho(\lambda^{2}t,\lambda x),\lambda u(\lambda^{2}t,\lambda x))
	\end{equation}
	is a solution of (\ref{a1}) with initial data~$(\rho_{0}(\lambda x),\lambda u_{0}(\lambda x))$. 
	A functional space for data $(\rho_{0},u_{0})$ or for solution $(\rho,u)$ is said to be scaling-invariant of the equation if its norm is invariant under transformation (\ref{aa5}). In particular, $\dot{B}^{\frac{3}{p}}_{p,r}(\mathbb{R}^{3})\times \dot{B}^{\frac{3}{p}-1}_{p,r}(\mathbb{R}^{3})$ is a  scaling-invariant space for \eqref{a1} according to (\ref{aa5}).   In addition,  as indicated in \cite{1996L}, system \eqref{a1}  with the initial density  $\rho_0$ being far away from vacuum has the following property:
	\begin{equation}\label{aa7}
	 \operatorname{meas}\left\{x \in \mathbb{R}^N \mid \alpha \leq \rho(t, x) \leq \beta\right\} \text {~is~independent~of}~t \geq 0,
	\end{equation}
	for any $0\leq\alpha\leq\beta.$ 
	
	Based on the above two properties of system \eqref{a1}, a lot of effort was spent on the global well-posedness of 3D inhomogenous Navier-Stokes equations with constant viscosity in the critical spaces when the initial density has the positive lower bound. Danchin \cite{2003D} established the global well-posedness of (\ref{a1}) under the smallness assumptions on $\|\rho_0-1\|_{\dot{B}^{\frac{N}{p}}_{p, 1}}$ and $\|u_0\|_{\dot{B}^{\frac{N}{p}-1}_{p,1}}$ for any $p\in[1,N],$ where $N=2,3.$ Abidi-Gui-Zhang \cite{2012AGZ,2013AGZ} removed the size restriction of the initial density and proved the global well-posedness of (\ref{a1}) only provided that $u_{0}$ is small in $\dot{B}^{-1+\frac{3}{p}}_{p,1}(\mathbb{R}^3)$ with $p\in [3, 4].$
	Zhang \cite{2020z} proved the global existence of weak solutions to (\ref{a1}) when $u_0$ is small in $\dot{B}^{\frac{1}{2}}_{2,1}$ and $\rho_0\in L^\infty$ without any additional density regularity. Danchin-Wang \cite{2022a} generalized this result under the condition that $\rho_0$ is close to a positive constant and $u_{0}$ is small in $\dot{B}^{-1+\frac{3}{p}}_{p,1}(\mathbb{R}^3)$ with $p\in]1,3[$.
	For the case that $\rho_0$ is allowed to vanish, Craig-Huang-Wang \cite{2013CHW} established the global existence and uniqueness of strong solutions to (\ref{a1}) provided that $u_0$ is small in $\dot{H}^{\frac{1}{2}}(\mathbb{R}^3).$ More recently, Abidi-Gui-Zhang proved the global existence of 2D Navier-Stokes equations with constant viscosity in \cite{2023AGZ}, assuming that $u_{0}\in L^2\cap\dot{B}^{-1+\frac{2}{p}}_{p,1}(\mathbb{R}^2)$ with $p\in [2,+\infty[$. See \cite{2011AGZ,2004D,2012DM,2017j} and so on for more results  on the well-posedness of the Navier-Stokes system with constant viscosity in critical spaces. 
	
	For the case with density-dependent viscosity, it becomes complicated. Until now, there are only a few results on this topic. For two-dimensional system, Huang-Paicu-Zhang \cite{2013HPZ} proved that there exists a global and unique solution 
	$(\rho,u)$ satisfying that $\rho-1 \in \mathcal{C}_b([0, \infty) ; \dot{B}_{q, 1}^{\frac{2}{q}}\left(\mathbb{R}^2\right))$ and $u \in \mathcal{C}_b([0, \infty) ; \dot{B}_{p, 1}^{-1+\frac{2}{p}}\left(\mathbb{R}^2\right)) \cap L^1(\mathbb{R}^{+} ; \dot{B}_{p, 1}^{1+\frac{2}{p}}\left(\mathbb{R}^2\right))$ for $1<q\leq p<4$ provided that 
		\begin{equation}\label{b1}
	\eta \stackrel{\text { def }}{=}\left\|\rho_0-1\right\|_{\dot{B}_{q, 1}^{\frac{2}{q}}} \exp \left\{C_0\left(1+\mu^2(1)\right) \exp \left(\frac{C_0}{\mu^2(1)}\left\|u_0\right\|_{\dot{B}_{p, 1}^{-1+\frac{2}{p}}}^2\right)\right\} \leq \frac{c_0 \mu(1)}{1+\mu(1)},	
	\end{equation}
	for some sufficiently small $c_0.$ For the $N$-dimensional system, assuming that $p\in(1,2N)$ with $N=2,3$ and $ 0<\underline{\mu}<\mu(\rho),$ Abidi \cite{2007A} proved that  inhomogeneous incompressible Navier-Stokes system has a uniquely global solution under the smallness condition that
	\begin{equation}\label{b2}
	\|\rho_0-1\|_{\dot{B}^{\frac{N}{p}}_{p,1}}+\|u_0\|_{\dot{B}^{\frac{N}{p}-1}_{p,1}}\leq \varepsilon_0.
	\end{equation}
When the initial density is strictly away from vacuum, Abidi-Zhang \cite{2015AZ} 
obtained the global well-posedness  of solution to (\ref{a1}) under 
\begin{equation}\label{aa1}
	\|\mu(\rho_0)-\mu(1)\|_{L^\infty}\leq \varepsilon_{0} \quad {\rm and}\quad \|u_0\|_{L^2}\|\nabla u_0\|_{L^2}\leq \varepsilon_1,
\end{equation}
where $\varepsilon_0, \varepsilon_1$ are sufficiently small constants. In particular, we mention that the small assumption of $\|\mu(\rho_0)-\mu(1)\|_{L^{\infty}}$ in \eqref{aa1} is slightly weaker than that of $\|\rho_0-1\|_{\dot{B}^{\frac{N}{p}}_{p,1}}$ in \eqref{b2}. However, all the above global well-posedness results for (\ref{a1}) with density-dependent viscosity in critical Besov spaces have been obtained on the basis of smallness restriction on the intial density. Indeed, accounting for the kind of assumption on the fluctuation of intial density, the momentum equation $\eqref{a1}_2$ can be regarded as the inhomogenous Navier-Stokes system with constant viscosity in terms of some small source term, i.e.,
	\begin{align}
	\partial_{t}(\rho u)+\operatorname{div}(\rho u\otimes u)-\Delta u +\nabla\pi=\operatorname{div}(2(\mu(\rho)-1)\mathbb{D}u).
	\end{align}
Because of above-mentioned views, it is possible to obtain the gobal well-posedness for \eqref{a1} in the same spirit of \cite{2013AGZ}.
	
	In this manuscript, we are concerned with the global well-posedness of system \eqref{a1} in critical spaces under the case that the initial density is far away from vaccum. Compared with the previous results, we are devoted to removing the small assumption on the fluctuation of initial  density. In other words, we will prove the global existence and uniqueness of solutions to system \eqref{a1} with variable viscosity $\mu(\rho)$ under the small assumption of intial velocity fields $u_0$ in the critical space $\dot{B}^{-1+\frac 3p}_{p,1}$.  Obviously, without the small restriction on the fluctuation of initial density, it is inevitable to encounter a lot of new challenges. 

	Before we state our main result, we introduce our main difficulties and strategies here. In fact, now we are not able to borrow the idea in deriving the global well-posedness of 3D inhomogenous Navier-Stokes equations with constant viscosity as mentioned earlier. Our first strategy is to fullly utilize the regularity of Stokes equatons with variable coefficients (see Lemma \ref{Lemma-5.1} for details) instead of the classical Stokes equations. Specifically, we rewrite $(\ref{a1})_{2}$ as
	\begin{equation}\label{aa13}
	-\operatorname{div}(2\mu(\rho)\mathbb{D}u)+\nabla\pi=-\rho u_t-\rho (u\cdot \nabla) u,
	\end{equation} 	
which implies the regularity of the velocity fields is coupled with the uniform bound on the $L^{\infty}(0,\infty;L^{r})$-norm of $\nabla\mu(\rho)$ (see \eqref{d24} below) due to the appearance of the variable viscosity.
At the same time, the regularity theory of transport equation tells us that the uniform bound on the $L^{\infty}(0,\infty;L^{r})$-norm of $\nabla\mu(\rho)$ strongly depends on  the $L^{1}(0,\infty;L^\infty)$-norm of $\nabla u$, since  $\mu(\rho)$ satisfies the transport equation
	\begin{align}
	\partial_t \mu(\rho)+u\cdot \nabla \mu(\rho)=0.
	\end{align}
At this stage, according to the bootstrapping argument, we have to prove that the $L^{1}(0,\infty;L^\infty)$-norm of $\nabla u$ is small enough to close the above estimates instead of the uniform bound on the $L^{1}(0,\infty;L^\infty)$-norm of $\nabla u$ in the previous work, which bring us  the second essential difficulty. Our novel ingredient here is the decomposition of the velocity fields, i.e., we split $u$ into $v$ and $w$ (resp. $\nabla\pi$ into $\nabla\pi_v$ and $\nabla\pi_w$), where $(v,\nabla\pi_v)$ satisfies the classical 3D Navier-Stokes equations
\begin{equation}\label{a11}
\left\{\begin{array}{l}
\partial_t v+v \cdot \nabla v-\Delta v+\nabla \pi_v=0, \\
\operatorname{div} v=0,                                \\
\left. v\right|_{t=t_0}=u\left(t_0\right),
\end{array}\right.
\end{equation}
and $(\rho,w,\nabla\pi_w)$ satisfies
\begin{equation}\label{a12}
\left\{\begin{array}{l}
\partial_t \rho+\operatorname{div}(\rho(v+w))=0,                                                                         \\
\rho \partial_t w+\rho(v+w) \cdot \nabla w-\operatorname{div}(2\mu(\rho)\mathbb{D}w)+\nabla \pi_w                        \\
\quad=(1-\rho)\left(\partial_t v+v \cdot \nabla v\right)-\rho w \cdot \nabla v+\operatorname{div}(2(\mu(\rho)-1)\mathbb{D}v), \\
\operatorname{div} w=0,                                                                                                  \\
\left.\rho\right|_{t=t_0}=\rho\left(t_0\right),\left.\quad w\right|_{t=t_0}=0.
\end{array}\right.
\end{equation}
As shown in \eqref{a11} and \eqref{a12}, we solve the two systems from the time $t_0$ instead of $0$. Specifically, according to the local well-posedness theory of \cite{2022q}, there exists a local solution $(\rho, u, \nabla\pi)$ of the system \eqref{a1} and some $t_0\in (0,1)$ such that
$$
\|u(t_0)\|_{\dot{B}^{-1+\frac{3}{p}}_{p,1}\cap\dot{B}^{1+\frac{3}{p}}_{p,1}}\leq C \|u_0\|_{ \dot{B}^{-1+\frac{3}{p}}_{p,1}}.
$$
Obviously, $u(t_0)$ possesses the more regularity than $u_0$ due to the smooth effect of the paraboic system.
For system \eqref{a11}, we easily know from \cite{2013AGZ} that for $s\in[-1+\frac{3}{p},1+\frac{3}{p}],$
\begin{equation}\label{bbb1}
\|v\|_{\widetilde{L}^{\infty}([t_0,\infty);\dot{B}^{s}_{p,1})}+\| v\|_{L^1([t_0,\infty);\dot{B}^{s+2}_{p,1})}\leq C\|u_0\|_{\dot{B}_{p,1}^{-1+\frac{3}{p}}}.
\end{equation}
As for system \eqref{a12}, it is more complicated than the original system. Taking advantage of the zero initial velocity and the higher regularity of $v$ implies that we have
\begin{equation}\label{bbbbb1}
\begin{aligned}
\|\nabla w\|_{L^{\infty}([t_0,\infty);L^2)}
\leq C\|v\|_{L^2([t_0,\infty);\dot{B}^{s+1}_{p,1})}\leq C \|u_0\|_{ \dot{B}^{-1+\frac{3}{p}}_{p,1}},
\end{aligned}
\end{equation}
which contributes to the $L^{\infty}([t_0,\infty);L^2)$-norm of $\nabla u.$ Based on this, we assumed that the initial velocity $u_0$ belongs to a negative Sobolev space $H^{-2\delta}$ in order to obtain the time-decay estimates of the velocity fields $u$ with different temporal and spatial norms. In fact, we emphasize that it is impossible to obtain the desired time-decay estimates of the velocity fields $u$ without the help of the decomposition of the velocity fields. See section 4 for more details. 

\par In the following, we concentrate on verifying the smallness of the $L^{1}(t_0,\infty;L^\infty)$-norm of $\nabla u$. In particular, by means of the interpolation theorem and time-decay estimates of $u$, it is sufficient to prove instead the smallness of the gradient of the velocity fields with respect to the $L^2(t_0,\infty;L^6)$-norm, which plays the key role in obtaining the global well-posedness of the solution for \eqref{a1} with variable viscosity. Roughly speaking, we expect to prove that
\begin{equation}\label{bbbb2}
\|\nabla u\|_{L^2([t_0,\infty);L^6)}\leq \|\nabla^2 w\|_{L^2([t_0,\infty);L^2)}+\|\nabla v\|_{L^2([t_0,\infty);L^6)}\leq C \|u_0\|_{ \dot{B}^{-1+\frac{3}{p}}_{p,1}}.
\end{equation}
In fact, according to \eqref{bbb1} and the embedding theorem, $\dot{B}^{\frac 3p+\frac 12}_{p,1}\hookrightarrow \dot{B}^{1}_{6,1}$, implies that
\begin{equation}\label{bbb2}
\|\nabla v\|_{L^2([t_0,\infty);L^6)}\leq C\|v\|_{\widetilde{L}^2([t_0,\infty);\dot{B}^{\frac 3p+\frac12}_{p,1})}\leq C\|u_0\|_{\dot{B}_{p,1}^{-1+\frac{3}{p}}}.
\end{equation}
The estimate
$$
\|\nabla^2 w\|_{L^2([t_0,\infty);L^2)}\leq C \|u_0\|_{ \dot{B}^{-1+\frac{3}{p}}_{p,1}},
$$
comes from the second-order estimates of $w$ on the basis of \eqref{bbbbb1}.  More details can be found in section 4. Besides, we mention that the propogation of  the negative Sobolev space also brings some  additional constraits such as the maximum value of $p$ is $\frac92$ instead of $6$ compared with  the local well-posedness work of \cite{2022q}. See Proposition \ref{Proposition-4.3} for more information.



	Throughout this paper, we shall always assume that $\rho_0$ having a positive lower bound and
	\begin{equation} \label{a6}
	0<\underline{\mu}\leq\mu(\rho_0)\leq\overline{\mu}, \quad \mu(\cdot)\in\mathrm{W}^{3,\infty}(\mathbb{R}^{+})\quad{\rm  and }\quad\mu(1)=1.
	\end{equation}
	\par The main result of this paper states as follows:
	
	\begin{Theorem}\label{Theorem-1} Let $q\in[1,2]$ and $p\in]1,9/2]$ with $\max\{\frac{1}{p},1-\frac{2}{p}\}\leq\frac{1}{q}\leq\frac{1}{p}+\frac{1}{3}.$ Assume that the initial data $(\rho_{0},u_{0})$ satisfying 
		$$\rho_{0}-1\in B^{\frac{3}{q}}_{q,1}(\mathbb{R}^{3}),~\nabla\mu(\rho_0)\in L^{r}(\mathbb{R}^{3}),~u_{0}\in \dot{B}^{-1+\frac{3}{p}}_{p,1}\cap\dot{H}^{-2\delta}(\mathbb{R}^{3}),$$
		where $r\in ]3,9[$ and $\delta\in]1/2,3/4[.$ Then there exists a small positive constant $\varepsilon$ depending on $\|\rho_{0}-1\|_{B^{\frac{3}{q}}_{q,1}},$ $\|\nabla\mu(\rho_0)\|_{L^{r}}$ and $\|u_0\|_{\dot{H}^{-2\delta}}$, which satisfies
		\begin{equation}\label{a7}
		\|u_{0}\|_{\dot{B}^{-1+\frac{3}{p}}_{p,1}}\leq\varepsilon,
		\end{equation}
	then the Cauchy problem $(\ref{a1})$ admits a unique and global strong solution $(\rho,u,\nabla\pi)$ with
	\begin{equation}
	\rho-1 \in \mathcal{C}_{b}\bigg(\mathbb{R}^{+} ;B^{\frac{3}{q}}_{q,1}\bigg)~and~
	u \in \mathcal{C}_{b}\bigg(\mathbb{R}^{+}; \dot{B}^{-1+\frac{3}{p}}_{p,1}\bigg)\cap L^{1}_{loc}\bigg(\mathbb{R}^{+}; \dot{B}^{1+\frac{3}{p}}_{p,1}\bigg).
	\end{equation}
	Moreover, 
	 there exists a positive time $t_0$ such that 
	\begin{equation}\label{aa18}
	\begin{aligned}
	&\sup_{t\in[t_{0},T]}\int^{T}_{t_{0}}t^{2\delta}|u|^2dx+\int^{T}_{t_{0}}\int_{\mathbb{R}^{3}}t^{2\delta_{-}}|\nabla u|^2dxdt+\int^{T}_{t_{0}}\int_{\mathbb{R}^{3}}t|u_{t}|^{2}dxdt\\
	&\quad+\sup_{t\in[t_{0},T]}\int_{\mathbb{R}^{3}}t|\nabla u|^{2} dx+\sup_{t\in[t_{0},T]}\int_{\mathbb{R}^{3}}t^2|u_{t}|^{2}dx+\int^{T}_{t_{0}}\int_{\mathbb{R}^{3}}t^{2}|\nabla u_{t}|^{2}dxdt\leq \widetilde{C},
	\end{aligned}
	\end{equation}
	where $\widetilde{C}$ depends only on $\underline{\mu}$, $\overline{\mu}$, $p$, $q$, $\|\nabla\mu(\rho_0)\|_{L^{r}}$ and $\|u_0\|_{\dot{H}^{-2\delta}}.$ 
\end{Theorem}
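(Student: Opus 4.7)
The plan is to combine a local existence result with global-in-time a priori estimates, then close everything by a standard continuation argument. Start by invoking the local well-posedness result of \cite{2022q}: under the stated hypotheses there exists a local strong solution $(\rho,u,\nabla\pi)$ on $[0,T^*)$ and a time $t_0\in(0,1)$ at which the velocity enjoys the improved regularity $\|u(t_0)\|_{\dot{B}^{-1+3/p}_{p,1}\cap \dot{B}^{1+3/p}_{p,1}}\leq C\|u_0\|_{\dot{B}^{-1+3/p}_{p,1}}$. The task thus reduces to propagating the appropriate norms from $t_0$ to $+\infty$ under the smallness hypothesis \eqref{a7}, together with establishing the weighted bounds \eqref{aa18}.

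At $t_0$ perform the decomposition $u=v+w$ of \eqref{a11}--\eqref{a12}. The homogeneous piece $v$ solves the classical 3D Navier-Stokes system with small initial data in $\dot{B}^{-1+3/p}_{p,1}$, so the results of \cite{2013AGZ} give the full parabolic estimate \eqref{bbb1} and, through the embedding $\dot{B}^{3/p+1/2}_{p,1}\hookrightarrow \dot{B}^1_{6,1}$, the bound \eqref{bbb2} on $\|\nabla v\|_{L^2(L^6)}$. For the remainder $w$, the zero initial datum and the improved regularity of $v$ yield the $L^\infty(L^2)$ estimate \eqref{bbbbb1}; one then runs a second-order energy estimate on \eqref{a12} -- testing with $-\Delta w$ and using the $L^\infty$ control of $\mu(\rho)$ provided by \eqref{a6} and the transport equation -- to produce $\|\nabla^2 w\|_{L^2([t_0,\infty);L^2)}\le C\|u_0\|_{\dot{B}^{-1+3/p}_{p,1}}$. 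Summing the two contributions gives \eqref{bbbb2}.

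Next, exploit the hypothesis $u_0\in \dot{H}^{-2\delta}$ to propagate the negative Sobolev norm through the system and derive the weighted time-decay estimates in \eqref{aa18}. This is where the decomposition is essential: applying the multipliers $t^{2\delta}$, $t^{2\delta_{-}}$, $t$, $t^2$ to the energy identities for $u$, $\nabla u$ and $u_t$ and integrating by parts, one extracts the decay from $v$ (classical Navier-Stokes with $\dot{H}^{-2\delta}$ datum, using interpolation between the energy class and \eqref{bbb1}), and uses the perturbative control on $w$ just obtained to absorb all cross terms; the variable-viscosity contribution $\operatorname{div}(2(\mu(\rho)-1)\mathbb{D}v)$ is handled by pairing the $L^\infty(L^r)$ bound on $\nabla\mu(\rho)$ with the parabolic gain on $v$. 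The upper bound $p\le 9/2$ will appear at exactly this step, since propagating $\dot{H}^{-2\delta}$ simultaneously with the critical $L^p$-scaling norm forces the Sobolev indices into this range.

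Finally, interpolate
\[
\|\nabla u\|_{L^1([t_0,\infty);L^\infty)} \lesssim \|\nabla u\|_{L^2([t_0,\infty);L^6)}^{\theta}\,\bigl(\text{weighted norm controlled by \eqref{aa18}}\bigr)^{1-\theta},
\]
so that shrinking $\varepsilon$ renders this quantity arbitrarily small. Insert this smallness into the transport equation $\partial_t\mu(\rho)+u\cdot\nabla\mu(\rho)=0$ to conclude a uniform $L^\infty(L^r)$ bound on $\nabla\mu(\rho)$ depending only on $\|\nabla\mu(\rho_0)\|_{L^r}$, then into the variable-coefficient Stokes regularity of Lemma \ref{Lemma-5.1} applied to \eqref{aa13}, to recover the full $L^1([t_0,\infty);\dot{B}^{1+3/p}_{p,1})$ regularity of $u$; uniqueness follows from the same estimate applied to the difference of two solutions. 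A standard bootstrap/continuation argument then shows the maximal interval of existence is $[0,\infty)$. The main obstacle is the simultaneous coupling in the bootstrap: the variable-coefficient Stokes estimate needs smallness of $\|\nabla u\|_{L^1(L^\infty)}$, which requires the decay estimates of \eqref{aa18}, which in turn require the decomposition and a uniform bound on $\nabla\mu(\rho)$; arranging these dependencies so that smallness of $\|u_0\|_{\dot{B}^{-1+3/p}_{p,1}}$ alone (with $\|\rho_0-1\|_{B^{3/q}_{q,1}}$, $\|\nabla\mu(\rho_0)\|_{L^r}$ and $\|u_0\|_{\dot{H}^{-2\delta}}$ kept arbitrary) suffices to close the loop is the heart of the argument.
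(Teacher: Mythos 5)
Your overall architecture --- local well-posedness from \cite{2022q} giving $u(t_0)\in\dot B^{-1+3/p}_{p,1}\cap\dot B^{1+3/p}_{p,1}$, the decomposition $u=v+w$ with $v$ solving classical Navier--Stokes (smallness via \cite{2013AGZ}) and $w$ the perturbation, time-decay obtained by propagating the $\dot H^{-2\delta}$ regularity (where $p\le 9/2$ arises, in the pressure estimates), interpolating to obtain smallness of $\|\nabla u\|_{L^1(L^\infty)}$, closing the loop through the transport of $\nabla\mu(\rho)$ and the variable-coefficient Stokes regularity of Lemma~\ref{Lemma-5.1} --- is exactly the paper's route, including the bootstrap structure in which only $\|u_0\|_{\dot B^{-1+3/p}_{p,1}}$ is taken small while $\rho_0-1$, $\nabla\mu(\rho_0)$, $u_0\in\dot H^{-2\delta}$ remain of arbitrary size.

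The one place where you genuinely depart from the paper is the second-order energy estimate on $w$: you propose to test $\eqref{a12}_2$ against $-\Delta w$, and you cite only ``the $L^\infty$ control of $\mu(\rho)$'' as the ingredient that makes this work. With variable viscosity this is not quite enough. Expanding $\operatorname{div}(2\mu(\rho)\mathbb D w)\cdot\Delta w=\mu(\rho)|\Delta w|^2+2(\nabla\mu(\rho)\cdot\mathbb D w)\cdot\Delta w$, the second term is controlled only with the $L^r$ information on $\nabla\mu(\rho)$ (interpolating $\nabla w$ between $L^2$ and $L^6$ and using Young with a large power of $\|\nabla\mu(\rho)\|_{L^r}$ times $\|\nabla w\|_{L^2}^2$, which is the small bootstrap quantity); $L^\infty$ control of $\mu(\rho)$ alone does nothing for it. In addition $\int\rho\,w_t\cdot(-\Delta w)$ produces $\nabla\rho$ commutator terms after integrating by parts. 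The paper avoids both issues by testing against $w_t$, which pairs naturally with $\rho\partial_t w$ and gives the parabolic identity $\|\sqrt\rho\,w_t\|_{L^2}^2+\frac{d}{dt}\int\mu(\rho)\mathbb D w:\mathbb D w$, and then recovering $\|\nabla^2 w\|_{L^2}$ through Corollary~\ref{c-2.1} (the elliptic estimate of Lemma~\ref{Lemma-5.1}), which already carries the $\|\nabla\mu(\rho)\|_{L^r}^{r/(r-3)}\|\nabla w\|_{L^2}$ penalty that the bootstrap then absorbs. Your route is salvageable but needs the $L^r$ bound on $\nabla\mu(\rho)$ --- not the $L^\infty$ bound on $\mu(\rho)$ --- plus a treatment of the $\rho_t$ commutator; the paper's $w_t$-plus-Stokes-regularity route is the cleaner choice because it isolates all the $\nabla\mu$ dependence into a single elliptic lemma tailored to the variable-coefficient operator.

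One smaller remark: the weighted decay estimates in \eqref{aa18} are in fact derived on $u$ directly (Corollary~\ref{c-4.2} and Lemmas~\ref{Lemma-5.4}--\ref{Lemma-5.5}), using the $\dot H^{-2\delta}$ propagation of Proposition~\ref{Proposition-4.3} and the smallness of $\nabla w$ only as a prior input; your phrasing of ``extracting decay from $v$ and absorbing the rest via $w$'' is a workable alternative organization but not quite how the paper does it. Also, the propagation of $\dot H^{-2\delta}\cap H^2$ regularity to $u(t_0)$ (Propositions~\ref{Proposition-4.3} and~\ref{Corollary-3.1}) is a substantial and case-heavy computation; your proposal correctly flags that this is where $p\le 9/2$ bites, but treats it rather lightly --- worth bearing in mind that this is where most of the hard Littlewood--Paley work sits.
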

	\begin{Remark}
		In fact, in Theorem \ref{Theorem-1} we show more detailed information about this unique and global solution. In particular, as $3<p\leq\frac{9}{2},$ Theorem \ref{Theorem-1} implies the global well-posedness of (\ref{a1}) with data $(\rho_{0}-1,u_0^{\epsilon})$ for $\varepsilon$ sufficiently small, where $\rho_0-1$ satisfies the assumptions of Theorem \ref{Theorem-1} and
		\begin{equation}
		u_0^{\varepsilon}=\sin(\frac{x_1}{\varepsilon})(0,-\partial_3 \phi,\partial_2 \phi), \quad {\rm for~any}~\phi\in\mathcal{S}(\mathbb{R}^3).
		\end{equation}
	\end{Remark}
\begin{Remark}
	When the initial density has a positive lower bound, (\ref{a1}) is analogous to a quasilinear parabolic system, where we do not have the exponential decay properties stated as He-Li-L${\rm \ddot{u}}$ \cite{2021HLL}. Therefore the assumption $u_0$ belonging to $\dot{H}^{-2\delta}$ of theorem \ref{Theorem-1} is necessary, which inherits the algebraic time-decay estimate (\ref{aa18}).
\end{Remark}
\begin{Remark}
In our recent work, we prove the similar result when $p=2$ in \cite{NW2023NS}.
\end{Remark}

	\par Let us complete this section with the notations we are going to use in this context.
	
	\par \textbf{Notations} We always denote $a_{-}$ to be any number strictly less than $a$. Let $A,$ $B$ be two operators, we denote $[A, B]=AB-BA,$ the commutator between $A$ and $B.$ For $a\lesssim b$, we mean that there is a uniform constant $C$, which may be different on different lines, such that $a\leq Cb$. We shall denote by $(a|b)$ the $L^2(\mathbb{R}^3)$ inner product of $a$ and $b.$ For $X$ a Banach space, we denote $\|(f,g)\|_{X}\stackrel{\Delta}{=}\|f\|_{X}+\|g\|_{X}.$ Finally, $(c_{j})_{j\in\mathbb{Z}}$ (resp. $(d_{j})_{j\in\mathbb{Z}}$) will be a generic element of $\ell^{2}(\mathbb{Z})$ (resp. $\ell^{1}(\mathbb{Z})$) so that $\sum_{j\in\mathbb{Z}}c^{2}_{j}=1$ (resp. $\sum_{j\in\mathbb{Z}}d_{j}=1$).

    \par This paper is organized as follows. In  section 2, we recall some basic Littlewood-Paley theory, as well as some necessary lemmas. We obtain the local-in-time existence  of system \eqref{a1} in section 3. With these estimates, we shall prove the global well-posedness of solution to system (\ref{a1}) in section 4.
	
\section{Littlewood-Paley theory and preliminary estimates}
In this section, we introduce some notations and conventions, and recall some standard lemmas of Besov space, which will be used throughout this paper, as well as some necessary linear estimates for density-dependent Stokes equations.

\subsection{Basic facts on Littlewood-Paley theory}
Since the proof of Theorem \ref{Theorem-1} requires a dyadic decomposition of the Fourier variables, we shall recall the Littlewood-Paley decomposition, such as the commutator's estimates. See \cite{2011BCD} for more details.

\begin{Definition}\label{Definition-1} (\cite{2011BCD}) Let $(p,r)\in[1,\infty]^{2},~s\in\mathbb{R}$ and $u\in\mathcal{S}'(\mathbb{R}^{3}),$ we set
	\begin{equation}\label{a13}
	\|u\|_{B^{s}_{p,r}}\stackrel{\mathrm{ def }}{=}(2^{qs}\|\Delta_{q}u\|_{L^{p}})_{l^{r}},\quad \|u\|_{\dot {B}^{s}_{p,r}}\stackrel{\mathrm{ def }}{=}(2^{qs}\|\dot{\Delta}_{q}u\|_{L^{p}})_{l^{r}}.
	\end{equation}
\end{Definition}

\begin{Definition}\label{Definition-2} (\cite{2011BCD}) Let $(p,\lambda,r)\in[1,\infty]^{3},$ $s\in\mathbb{R},$ $T\in(0,+\infty],$ and $u\in\mathcal{S}'(\mathbb{R}^{3}),$ we set
	\begin{equation}\label{a14}
	\|u\|_{\widetilde{L}^{\lambda}_{T}(B^{s}_{p,r})}\stackrel{\mathrm{ def }}{=}(2^{qs}\|\Delta_{q}u\|_{L^{\lambda}_{T}(L^{p})})_{l^{r}},\  \|u\|_{\widetilde{L}^{\lambda}_{T}(\dot{B}^{s}_{p,r})}\stackrel{\mathrm{ def }}{=}(2^{qs}\|\dot{\Delta}_{q}u\|_{L^{\lambda}_{T}(L^{p})})_{l^{r}}.
	\end{equation}
\end{Definition}

\begin{Lemma}\label{lemma-2.1} $(\cite{2011BCD})$ Let $\mathcal{C}$ be the annulus $\{\xi\in\mathbb{R}^{N}:3/4\leq|\xi|\leq 8/3\}.$ There exist radial functions $\chi$ and $\varphi$, valued in the interval $[0,1],$ belonging respectively to $\mathcal{D}(\mathcal{C})$, and such that
	\begin{equation}\label{a15}
	\begin{aligned}
	& \forall~\xi\in\mathbb{R}^{N},~\chi(\xi)+ \sum_{j\geq 0} {\varphi(2^{-j\xi})}=1,\\
	& \forall~\xi\in\mathbb{R}^{N},~ \sum_{j\in\mathbb{Z}}{ \varphi(2^{-j\xi})}=1,\\
	& |j-j'|\geq 2\Longrightarrow {\rm Supp}\varphi(2^{-j\xi\cdot})\cap{\rm Supp}\varphi(2^{-j'\xi\cdot})=\varnothing,\\
	& j\geq 1\Longrightarrow {\rm Supp}\chi\cap{\rm Supp}\varphi(2^{-j'\xi\cdot})=\varnothing.
	\end{aligned}
	\end{equation}
	the set $\widetilde{\mathcal{C}}=B(0,2/3)+\mathcal{C}$~is an annulus, and we have
	\begin{equation}\label{a16}
	|j-j'|\geq 5\Longrightarrow 2^{j'}\widetilde{\mathcal{C}}\cap 2^{j}\mathcal{C}=\varnothing.
	\end{equation}
\end{Lemma}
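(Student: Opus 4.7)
The plan is to give an explicit construction, following the standard Bahouri--Chemin--Danchin template cited by the authors. First I would fix a radial, nonincreasing smooth function $\theta:[0,\infty)\to[0,1]$ with $\theta(r)=1$ for $r\le 3/4$ and $\theta(r)=0$ for $r\ge 4/3$; existence is furnished by convolving the indicator of a suitable interval with a mollifier. Setting $\chi(\xi)=\theta(|\xi|)$ yields a radial $[0,1]$-valued smooth function with $\mathrm{supp}\,\chi\subset\{|\xi|\le 4/3\}$ and $\chi\equiv 1$ on $\{|\xi|\le 3/4\}$. I then define
\[
\varphi(\xi)\stackrel{\mathrm{def}}{=}\chi(\xi/2)-\chi(\xi).
\]
Radiality of $\varphi$ is inherited from $\chi$, and the monotonicity of $\theta$ gives $\chi(\xi/2)\ge\chi(\xi)$, so $\varphi\in[0,1]$. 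If $|\xi|\le 3/4$ then also $|\xi/2|\le 3/8\le 3/4$, hence $\chi(\xi)=\chi(\xi/2)=1$ and $\varphi(\xi)=0$; if $|\xi|\ge 8/3$ then $|\xi/2|\ge 4/3$ and both $\chi(\xi)$ and $\chi(\xi/2)$ vanish. Consequently $\mathrm{supp}\,\varphi\subset\mathcal{C}=\{3/4\le|\xi|\le 8/3\}$.

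Next I would verify the two partition identities by telescoping. Using $\varphi(2^{-j}\xi)=\chi(2^{-j-1}\xi)-\chi(2^{-j}\xi)$,
\[
\chi(\xi)+\sum_{j=0}^{N}\varphi(2^{-j}\xi)=\chi(2^{-N-1}\xi)\longrightarrow \chi(0)=1\quad\text{as }N\to\infty,
\]
which is the first identity of \eqref{a15}. For $\xi\neq 0$, a two-sided telescoping gives
\[
\sum_{j=-M}^{N}\varphi(2^{-j}\xi)=\chi(2^{-N-1}\xi)-\chi(2^{M}\xi)\longrightarrow 1-0=1\quad\text{as }M,N\to\infty,
\]
since $\chi$ is continuous at $0$ and compactly supported. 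That handles the second identity.

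Finally, I would check the three support-disjointness properties by straightforward bookkeeping on the dilated annuli $2^{j}\mathcal{C}=\{3\cdot 2^{j-2}\le|\xi|\le 2^{j+3}/3\}$. If $j'\ge j+2$ then $2^{j+3}/3<3\cdot 2^{j'-2}$ (since $2^{j'-j}\ge 4>32/9$), giving the second implication in \eqref{a15}; for the third, $j\ge 1$ forces $3\cdot 2^{j-2}\ge 3/2>4/3\ge\sup_{\mathrm{supp}\,\chi}|\xi|$, so $\mathrm{supp}\,\chi\cap\mathrm{supp}\,\varphi(2^{-j}\cdot)=\varnothing$. For \eqref{a16}, I would note that $\widetilde{\mathcal{C}}=B(0,2/3)+\mathcal{C}$ is an annulus contained in $\{1/12\le|\xi|\le 10/3\}$, and compare the outer radius of $2^{j}\mathcal{C}$ with the inner radius of $2^{j'}\widetilde{\mathcal{C}}$ to obtain disjointness once $2^{|j-j'|}$ is large enough, which happens for $|j-j'|\ge 5$.

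There is no real obstacle here; this is a purely constructive lemma whose content is bookkeeping on explicit supports. The only mildly delicate point is choosing the numerical thresholds $3/4$, $4/3$, $8/3$, and $2/3$ consistently so that every one of the support-disjointness inequalities closes with room to spare, which is why the construction is pinned to the specific annulus $\mathcal{C}$ stated in the lemma.
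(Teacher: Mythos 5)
Your construction is correct and is exactly the standard one from Bahouri--Chemin--Danchin, which the paper simply cites without reproducing a proof: take $\chi(\xi)=\theta(|\xi|)$ supported in $\{|\xi|\le 4/3\}$, set $\varphi(\xi)=\chi(\xi/2)-\chi(\xi)$, and verify the identities by telescoping and the disjointness claims by comparing radii of the dilated annuli. All your numerical checks close (the only tight spot is $|j-j'|=5$ in \eqref{a16}, where the outer radius of $2^{j}\mathcal{C}$ coincides with the inner radius of $2^{j'}\widetilde{\mathcal{C}}$; disjointness still holds because the Minkowski sum with the open ball $B(0,2/3)$ is open at its inner boundary), so there is nothing to add.
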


\begin{Lemma}\label{Lemma-2.2} $(\cite{2011BCD})$ 
	Let $\mathcal{C}$ be an annulus and $\mathcal{B}$ a ball. There exists a constant $C>0$ such that for any nonnegative integer $k,$ any couple  $(p,q)$ in $[1,\infty]^{2}$ with $q\geq p\geq 1$ and any $u$ of $L^{p}$ satisfying
	\begin{enumerate}
		\item If ${\rm Supp} \ \hat{u}\subset\lambda\mathcal{B},$ we have
		\begin{equation}\label{a17}
		\sup_{|\alpha|=k}\|\partial^{\alpha}u\|_{L^{q}}\leq C^{k+1}\lambda^{k+N(\frac{1}{p}-\frac{1}{q})}\|u\|_{L^{p}}.\end{equation}
		\item If ${\rm Supp} \ \hat{u}\subset\lambda\mathcal{C},$ we have
		\begin{equation}\label{a18} C^{-k-1}\lambda^{k}\|u\|_{L^{p}}\leq \|D^{k}u\|_{L^{p}}\leq C^{k+1}\lambda^{k}\|u\|_{L^{p}}.
		\end{equation}
	\end{enumerate} 
\end{Lemma}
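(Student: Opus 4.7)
\textbf{Proof proposal for Lemma \ref{Lemma-2.2}.} My plan is to reduce both parts to a standard Young-type convolution estimate, exploiting the Fourier localization by multiplying $\hat u$ by a smooth compactly supported cutoff equal to $1$ on the relevant set. Concretely, choose once and for all $\phi\in\mathcal{D}(\mathbb{R}^N)$ with $\phi\equiv 1$ on $\mathcal{B}$ (respectively $\widetilde\phi\in\mathcal{D}(\mathbb{R}^N\setminus\{0\})$ with $\widetilde\phi\equiv 1$ on $\mathcal{C}$ and support in a slightly larger annulus bounded away from the origin). Since $\operatorname{Supp}\hat u\subset \lambda\mathcal{B}$ implies $\hat u(\xi)=\phi(\xi/\lambda)\hat u(\xi)$, taking the inverse Fourier transform gives $u=h_\lambda\ast u$ with $h_\lambda(x)=\lambda^N h(\lambda x)$ and $h=\mathcal{F}^{-1}\phi\in\mathcal{S}(\mathbb{R}^N)$.

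For part (1), I would differentiate the identity $u=h_\lambda\ast u$ to get $\partial^\alpha u=(\partial^\alpha h_\lambda)\ast u$, then compute
\begin{equation*}
\partial^\alpha h_\lambda(x)=\lambda^{N+|\alpha|}(\partial^\alpha h)(\lambda x),\qquad \|\partial^\alpha h_\lambda\|_{L^r}=\lambda^{|\alpha|+N(1-1/r)}\|\partial^\alpha h\|_{L^r}.
\end{equation*}
With $r$ chosen from $1+1/q=1/p+1/r$ (so $N(1-1/r)=N(1/p-1/q)$), Young's convolution inequality yields $\|\partial^\alpha u\|_{L^q}\leq \lambda^{|\alpha|+N(1/p-1/q)}\|\partial^\alpha h\|_{L^r}\|u\|_{L^p}$. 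It then suffices to bound $\sup_{|\alpha|=k}\|\partial^\alpha h\|_{L^r}$ by $C^{k+1}$ for some $C$ independent of $k$; this is the standard fact that the Schwartz seminorms of a fixed function in $\mathcal{D}$ grow at most geometrically in the order of the derivative, proved by estimating $\partial^\alpha h$ in $L^\infty\cap L^1$ via the size of $\operatorname{Supp}\phi$ and the product rule applied to $\xi^\alpha\phi(\xi)$ after differentiating the defining integral.

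For the upper bound in part (2), I would run the same argument with $\widetilde\phi$ in place of $\phi$ and $p=q$ (so $r=1$), summing the resulting pointwise bounds over $|\alpha|=k$. The lower bound is the genuinely new ingredient: on $\mathcal{C}$ the symbol $|\xi|$ is bounded below by a positive constant, so $g(\xi):=|\xi|^{-2m}\widetilde\phi(\xi)$ is a Schwartz function for any integer $m\geq 0$. Writing $k=2m$ (even case) I use
\begin{equation*}
\hat u(\xi)=\widetilde\phi(\xi/\lambda)\hat u(\xi)=\lambda^{-2m}g(\xi/\lambda)\,\widehat{(-\Delta)^m u}(\xi),
\end{equation*}
so $u=\lambda^{-2m}(\mathcal{F}^{-1}g)_\lambda\ast(-\Delta)^m u$ and Young's inequality with $r=1$ gives $\|u\|_{L^p}\leq C^{2m+1}\lambda^{-2m}\|D^{2m}u\|_{L^p}$, where the geometric growth of $\|\mathcal{F}^{-1}g\|_{L^1}$ in $m$ is again handled by Schwartz-seminorm bookkeeping. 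For odd $k=2m+1$ I combine this with the upper bound at order one, $\|Du\|_{L^p}\leq C\lambda\|u\|_{L^p}$ applied to a single derivative, to reduce to the even case, yielding the claimed lower estimate $C^{-k-1}\lambda^k\|u\|_{L^p}\leq\|D^k u\|_{L^p}$.

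The calculations are routine once the cutoffs are set up; the only delicate point, and the main thing to watch, is tracking the constants so that $C$ is independent of $k$. I would keep an explicit geometric bound $\|\partial^\alpha h\|_{L^r}\leq C_0^{|\alpha|+1}$ (and analogously for $\mathcal{F}^{-1}g$) throughout, which is where a uniform $C$ in the statement ultimately comes from.
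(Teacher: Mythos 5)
The paper does not prove this lemma; it is quoted verbatim from \cite{2011BCD} (the classical Bernstein inequalities), and your argument is precisely the standard proof given there: convolution with a dilated cutoff plus Young's inequality for the upper bounds, and division by the symbol on the annulus for the lower bound. Your proposal is correct — including the reduction of odd $k=2m+1$ to the even case via $\|u\|_{L^p}\lesssim\lambda^{-2m-2}\|D^{2m+2}u\|_{L^p}$ and $\|D^{2m+2}u\|_{L^p}\lesssim\lambda\|D^{2m+1}u\|_{L^p}$ — so there is nothing to add.
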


\par In the rest of the paper, we shall frequently use homogeneous Bony's decomposition:
\begin{equation}\label{a19}
uv=T_{u} v+T'_{v}u=T_{u} v+T_{v} u+R(u,v),
\end{equation}
where
\begin{equation}\label{a20}
\begin{aligned}
& T_{u}v= \sum_{q\in\mathbb{Z}}\dot{S}_{q-1}u\dot{\Delta}_{q}v,\quad T'_{v}u=\sum_{q\in\mathbb{Z}}\dot{S}_{q+2}v\dot{\Delta}_{q}u,\\
& R(u,v)=\sum_{q\in\mathbb{Z}}\dot{\Delta}_{q}u\widetilde{\dot{\Delta}}_{q}v,\quad \widetilde{\dot{\Delta}}_{q}v=\sum_{|q'-q|\leq1}\dot{\Delta}_{q'}v.
\end{aligned}
\end{equation}
Similarly, we can obtain inhomogeneous Bony's decomposition \cite{2011BCD}.

\begin{Lemma}\label{Lemma-2.3} $(\cite{2011BCD})$
	Let $s \in \mathbb{R},$ $1 \leq r \leq \infty$, and $1 \leq p \leq p_1 \leq \infty$. Let $v$ be a vector field over $\mathbb{R}^d$. Assume that
	$$
	s>-d \min \left\{\frac{1}{p_1}, \frac{1}{p^{\prime}}\right\} \quad \text { or } \quad s>-1-d \min \left\{\frac{1}{p_1}, \frac{1}{p^{\prime}}\right\} \quad \text { if } \quad \operatorname{div} v=0,	$$
	where $p^{\prime}$ is the conjugate exponent of $p$. There exists a constant $C$, depending continuously on $p,$ $p_1,$ $s$ and $d$, such that
	\begin{equation}
	\left\|\left(2^{j s}\left\|[v \cdot \nabla; \Delta_j] f\right\|_{L^p}\right)_j\right\|_{\ell^r} \lesssim \|\nabla v\|_{B_{p_1, \infty}^{\frac{d}{p_1}} \cap L^{\infty}}\|f\|_{B_{p, r}^s} \quad\text { if }\quad s<1+\frac{d}{p_1}.
	\end{equation}
	In the limit case $s=-\min (\frac{d}{p_1}, \frac{d}{p^{\prime}})$ $($or $s=-1-\min (\frac{d}{p_1}, \frac{d}{p^{\prime}})$, if $\operatorname{div} v=0$$)$, we have
	\begin{equation}
	\sup _{j \geq-1} 2^{j s}\left\|[v \cdot \nabla; \Delta_j] f\right\|_{L^p} \lesssim \|\nabla v\|_{B_{p_1, 1}^{\frac{d}{p_1}}}\|f\|_{B_{p, \infty}^s}.
	\end{equation}
\end{Lemma}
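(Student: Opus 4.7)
The plan is to prove this commutator estimate via Bony's paraproduct decomposition applied to $v\cdot\nabla f$, then split $[v\cdot\nabla;\Delta_j]f$ into three distinct pieces that are handled by separate arguments. Write
\begin{equation*}
v^k\partial_k f \;=\; T_{v^k}\partial_k f \;+\; T_{\partial_k f} v^k \;+\; R(v^k,\partial_k f),
\end{equation*}
and subtract this from $v^k\partial_k(\Delta_j f)=T_{v^k}\partial_k\Delta_jf+T_{\partial_k\Delta_jf}v^k+R(v^k,\partial_k\Delta_jf)$. Thus
\begin{equation*}
[v\cdot\nabla;\Delta_j]f \;=\; \underbrace{[T_{v^k},\Delta_j]\partial_k f}_{I_j} \;+\; \underbrace{\bigl(T_{\partial_k\Delta_j f}v^k-\Delta_j T_{\partial_k f}v^k\bigr)}_{II_j} \;+\; \underbrace{\bigl(R(v^k,\partial_k\Delta_j f)-\Delta_j R(v^k,\partial_k f)\bigr)}_{III_j}.
\end{equation*}
I would estimate the three pieces separately and then take the $\ell^r(2^{js})$ norm.

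For $I_j$, the key is the classical first-order commutator trick: writing $\Delta_j$ as convolution with $2^{jd}h(2^j\cdot)$ and using a Taylor expansion $S_{q-1}v^k(x-y)-S_{q-1}v^k(x)=-y\cdot\int_0^1\nabla S_{q-1}v^k(x-\tau y)\,d\tau$, one gains the factor $2^{-j}$ needed to absorb the derivative on $\partial_k f$. Spectral localization restricts the sum to $|q-j|\le 4$, so
\begin{equation*}
\|I_j\|_{L^p}\;\lesssim\;\|\nabla v\|_{L^\infty}\sum_{|q-j|\le 4}\|\Delta_q f\|_{L^p},
\end{equation*}
which contributes an acceptable $\|\nabla v\|_{L^\infty}\|f\|_{B^s_{p,r}}$ bound for every $s\in\mathbb R$ after taking $\ell^r(2^{js})$ norms (with no restriction on $s$ coming from this term).

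For $II_j$, spectral localization forces the two pieces to live essentially in the same frequency annulus; in fact the difference is $\sum_{|q-j|\le N_0}\Delta_q(\partial_k T_{S_{q-1}\Delta_j f - \Delta_j S_{q-1}f}v^k)$ plus analogous terms, each controlled by paraproduct estimates using Bernstein's inequality ($L^{p_1}\hookrightarrow L^\infty$ for the high frequencies of $v$), yielding a bound of the form $\|\nabla v\|_{B^{d/p_1}_{p_1,\infty}\cap L^\infty}\|f\|_{B^s_{p,r}}$ provided $s<1+d/p_1$ (this is where the upper restriction on $s$ appears, to keep the $\ell^r$ summation over low frequencies of $f$ convergent when $\partial_k$ hits $f$).

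The piece $III_j$ is the source of the lower restriction on $s$. A direct paraproduct bound gives $\|R(v^k,\partial_k f)\|$ in a Besov space at regularity $s+1-d/p_1-1=s-d/p_1$ (roughly), and the remainder converges provided $s+1>0$ combined with $s>-d\min(1/p_1,1/p')$ so that one can absorb the derivative $\partial_k$. When $\operatorname{div} v=0$, I would instead write $R(v^k,\partial_k f)=\partial_k R(v^k,f)$, which effectively shifts $s$ by $1$ and relaxes the condition to $s>-1-d\min(1/p_1,1/p')$; this is the crucial integration-by-parts that yields the improved range in the divergence-free case. Summing the three pieces in $\ell^r(2^{js})$ produces the announced inequality.

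The main obstacle I expect is pinning down the endpoint case $s=-\min(d/p_1,d/p')$ (resp.\ $s=-1-\min(d/p_1,d/p')$ with $\operatorname{div} v=0$), where the $\ell^r$ summability of the remainder $III_j$ fails for $r<\infty$. The remedy is to work with $\ell^\infty(2^{js})$ on $f$ and $\ell^1(2^{jd/p_1})$ on $\nabla v$, estimating $R(v^k,\partial_k f)$ by $\sum_{q\ge j-N_0}\|\Delta_q v\|_{L^{p_1}}\|\widetilde\Delta_q\nabla f\|_{L^p}$ and using that the decay of $2^{qs}\|\Delta_q f\|_{L^p}$ is only a uniform bound, not a summable one; balancing the two Besov exponents on $\nabla v$ and $f$ then just produces the borderline regularity and explains why the norm on $\nabla v$ must strengthen to $B^{d/p_1}_{p_1,1}$ and the norm on $f$ weakens to $B^s_{p,\infty}$. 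Careful bookkeeping of the conjugate exponents in the remainder term (which produces the $\min(1/p_1,1/p')$ appearing in the hypothesis) finishes the proof.
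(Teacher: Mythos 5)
Your outline is correct and reproduces the standard proof of this commutator estimate (Lemma 2.100 in Bahouri--Chemin--Danchin), which is exactly the source the paper cites: the paper itself gives no proof, referring to \cite{2011BCD}. Your decomposition into the paraproduct commutator (handled by the first-order Taylor trick), the high-frequency-$v$ paraproduct terms (source of $s<1+d/p_1$), and the remainder (source of the lower bound on $s$, improved in the divergence-free case by writing $R(v^k,\partial_k f)=\partial_k R(v^k,f)$), together with the endpoint modification trading $\ell^r$ for $\ell^\infty$ on $f$ and $\ell^1$ on $\nabla v$, is precisely the argument in that reference.
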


\par Then we shall derive some conclusions about the transport equation, which will be frequently used throughout the succeeding sections. The proof process can be referred to \cite{2011BCD}.

\begin{Lemma}\label{Lemma-2.4} $(\cite{2011BCD})$
	Let $(p, q) \in[1, \infty]^2,$ $-1-\frac{3}{p}<s<1+3 \min \{\frac{1}{p}, \frac{1}{q}\}$ if $\frac{1}{p}+\frac{1}{q} \leq 1,$ or $-1-\frac{3}{q^{\prime}}<s<1+3 \min \{\frac{1}{p}, \frac{1}{q}\}$ if $\frac{1}{p}+\frac{1}{q} \geq 1,$ and $r \in[1, \infty]$ $($resp. $s=1+3 \min \left\{\frac{1}{p}, \frac{1}{q}\right\}$ with $r=1$$),$ where $q^{\prime}$ is the conjugate exponent of $q$. Let $v$ be a divergence free vector field with $\nabla v \in L^1 (0, T; \dot{B}_{p, r}^{\frac{3}{p}} \cap L^{\infty}(\mathbb{R}^3))$ $($resp. $v \in L^1 (0, T; \dot{B}_{p,1}^{1+\frac{3}{p}} (\mathbb{R}^3))$$).$ Given $a_0 \in \dot{B}_{q, r}^s$ and $f \in L^1 (0, T;$ $\dot{B}_{q, r}^s (\mathbb{R}^3)),$ the following transport equation
	\begin{equation}
	\left\{\begin{array}{l}
	\partial_t a+u \cdot \nabla a=f \\
	\left.a\right|_{t=0}=a_0
	\end{array}\right.
	\end{equation}
	has a unique solution $a \in \mathcal{C}([0, T] ; \dot{B}_{q, r}^s(\mathbb{R}^3)).$ Moreover, there holds for all $t\in[0,T]$
	\begin{equation}\label{a-equation}
	\|a\|_{\widetilde{L}_t^{\infty}(\dot{B}_{q, r}^s)} \leq\|a_0\|_{\dot{B}_{q, r}^s}+C\bigg\{\int_0^t\|a(\tau)\|_{\dot{B}_{q, r}^s} V^{\prime}(\tau) d \tau+\|f\|_{L_t^1(\dot{B}_{q, r}^s)}\bigg\},
   \end{equation}
	where $V(t) \stackrel{\text { def }}{=} \int_0^t\|\nabla v\|_{\dot{B}_{p, r}^{\frac{3}{p}} \cap L^{\infty}} d \tau$ $($resp. $V(t) \stackrel{\text { def }}{=} \int_0^t \|v\|_{\dot{B}_{p,1}^{1+\frac{3}{p}}} d \tau$$).$ Similar inequality holds for the inhomogeneous Besov spaces.
\end{Lemma}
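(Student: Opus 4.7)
My plan is to prove the a priori estimate by frequency localisation and then recover existence and uniqueness by a routine approximation scheme. First I would apply the dyadic block $\dot\Delta_j$ to the equation, writing
\begin{equation*}
\partial_t \dot\Delta_j a+v\cdot\nabla \dot\Delta_j a=\dot\Delta_j f+[v\cdot\nabla,\dot\Delta_j]a.
\end{equation*}
Because $v$ is divergence free, an $L^q$ energy estimate (multiplying by $|\dot\Delta_j a|^{q-2}\dot\Delta_j a$ when $q<\infty$, or using the standard duality/sign argument when $q=\infty$) eliminates the transport term, and one obtains
\begin{equation*}
\|\dot\Delta_j a(t)\|_{L^q}\le \|\dot\Delta_j a_0\|_{L^q}+\int_0^t\|\dot\Delta_j f(\tau)\|_{L^q}\,d\tau+\int_0^t\|[v\cdot\nabla,\dot\Delta_j]a(\tau)\|_{L^q}\,d\tau.
\end{equation*}
Multiplying by $2^{js}$ and taking the $\ell^r$ norm in $j$ gives control of $\|a\|_{\widetilde L^\infty_t(\dot B^s_{q,r})}$ by the initial data, the source term, and the sum in $j$ of the commutator terms.

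The heart of the matter is the commutator, which is exactly what Lemma~2.3 controls: in the range $-1-3\min(1/p_1,1/p')<s<1+3/p_1$ for divergence-free $v$, one has
\begin{equation*}
\Big\|\bigl(2^{js}\|[v\cdot\nabla,\dot\Delta_j]a\|_{L^q}\bigr)_j\Big\|_{\ell^r}\lesssim \|\nabla v\|_{\dot B^{3/p}_{p,r}\cap L^\infty}\|a\|_{\dot B^s_{q,r}}.
\end{equation*}
Choosing $p_1=p$ and $p'=q'$ is precisely what produces the range of $s$ stated in the lemma: the cases $1/p+1/q\le 1$ and $1/p+1/q\ge 1$ correspond to which of $1/p$ or $1/q'$ is the smaller, hence which constraint binds from below. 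Integrating in $\tau$ and inserting this bound yields
\begin{equation*}
\|a\|_{\widetilde L^\infty_t(\dot B^s_{q,r})}\le \|a_0\|_{\dot B^s_{q,r}}+\|f\|_{L^1_t(\dot B^s_{q,r})}+C\int_0^t\|a(\tau)\|_{\dot B^s_{q,r}}V'(\tau)\,d\tau,
\end{equation*}
which is the announced inequality. A Grönwall step (not required in the stated form) would then close the argument.

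For the limit case $s=1+3\min(1/p,1/q)$ with $r=1$, the endpoint commutator estimate from Lemma~2.3 degenerates, and one has to use the second statement of that lemma: a uniform-in-$j$ bound in the slightly larger index $s'=s-1$ combined with a logarithmic interpolation / an $\ell^1$-summation against $2^j$. This forces $r=1$ in the assumption and explains why $V$ has to involve the stronger norm $\|v\|_{\dot B^{1+3/p}_{p,1}}$ rather than $\|\nabla v\|_{\dot B^{3/p}_{p,r}\cap L^\infty}$. Existence is produced by a Friedrichs / mollification scheme: replace $v$ by $v_n=\dot S_n v$ and $a_0,f$ by their spectral truncations, solve the smooth transport ODE along the flow of $v_n$, verify that the above estimate is uniform in $n$, and pass to the limit using compactness in $\mathcal C([0,T];\dot B^{s'}_{q,r})$ for some $s'<s$ together with the Fatou property of $\widetilde L^\infty_t(\dot B^s_{q,r})$. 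Uniqueness follows by applying the same estimate to the difference of two solutions in a space with index one less (where the difference $f-\tilde f=0$ while the initial datum vanishes), using that $s-1$ still lies in the admissible range by the assumption $s>-3/p-1$ (or $s>-1-3/q'$); continuity in time is obtained in the standard way by proving convergence of the partial sums of the Littlewood–Paley series to $a$ in $\mathcal C([0,T];\dot B^s_{q,r})$.

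The main obstacle will be the endpoint case $s=1+3\min(1/p,1/q)$ with $r=1$, where the naïve commutator estimate fails and the scheme must be replaced by the limit statement of Lemma~2.3; inhomogeneous Besov spaces require also handling the low-frequency block $\Delta_{-1}$, which is done by an ordinary $L^q$ energy estimate on $\Delta_{-1}a$ without any commutator, producing only an extra $\|v\|_{L^\infty}$-type term that is absorbed into $V'(t)$ via the embedding $\dot B^{3/p}_{p,r}\cap L^\infty\hookrightarrow L^\infty$.
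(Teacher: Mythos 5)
Your proposal is correct and follows exactly the standard argument (frequency localisation, $L^q$ energy estimate using $\operatorname{div}v=0$, the commutator estimate of Lemma \ref{Lemma-2.3} with $p_1=p$, $\ell^r$ summation, then Friedrichs approximation for existence and a lower-index estimate for uniqueness), which is the proof in \cite{2011BCD} that the paper cites rather than reproducing. No discrepancies worth noting.
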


\subsection{Regularity results}
\par  The purpose of this subsection is to give the \emph{a priori} estimates for smooth solutions of the linearised equations. In order to derive the higher order \emph{a priori} estimates, we first recall the regularity results for the density-dependent Stokes equations as follows. 
\begin{equation}\label{d23}
\left\{\begin{array}{l}
-\operatorname{div} (2\mu(\rho)\mathbb{D}U)+\nabla\Pi=F,\ \ \ \ \ \ \ \ \ \ \  (t,x)\in\mathbb{R}_{+}\times\mathbb{R}^{3}, \\
\operatorname{div} U=0, \ \ \ \ \ \ \ \ \ \ \ \ \ \ \ \ \ \ \  \ \ \ \ \ \ \ \ \ \ \ \ \ \ \ (t,x)\in\mathbb{R}_{+}\times\mathbb{R}^{3}.
\end{array}\right.
\end{equation}	

\begin{Lemma}\label{Lemma-5.1} $(\cite{NW2023NS})$
	For positive constants $\underline{\mu}, \bar{\mu},$ and $r\in(3,\infty),$ in addition, assume that $\mu(\rho)$ satisfies 
	\begin{equation}\label{d22}
	\nabla\mu(\rho)\in L^{r},\quad 0<\underline{\mu}\leq\mu(\rho)\leq\bar{\mu}<\infty.
	\end{equation}
	Then, if $F\in L^{m}$ with $m\in[\frac{2r}{r+2},r],$ there exists some positive $C$ depending only on $\underline{\mu},\bar{\mu},r$ and $m$ such that the unique weak solution $(U,\Pi)\in H^{1}\times L^{2}$ to the Cauchy problem (\ref{d23}) satisfies
	\begin{equation}\label{d24}
	\|\nabla^{2} U\|_{L^{2}}\leq C \|F\|_{L^{2}}+C\|\nabla \mu(\rho)\|^{\frac{r}{r-3}}_{L^{r}}\|\nabla U\|_{L^{2}},
	\end{equation}
	and
	\begin{equation}\label{d25}
	\|\nabla^{2} U\|_{L^{m}}\leq C\|F\|_{L^{m}}+C\|\nabla\mu(\rho)\|^{\frac{r(5m-6)}{2m(r-3)}}_{L^{r}}\Bigg\{\|\nabla U\|_{L^{2}}+\|(-\Delta)^{-1}\operatorname{div}F\|_{L^{2}}\Bigg\}.
	\end{equation}
\end{Lemma}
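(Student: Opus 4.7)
The plan is to reduce the density-dependent Stokes system \eqref{d23} to the classical constant-coefficient Stokes system with a perturbative right-hand side, then exploit standard Calder\'on-Zygmund regularity together with Gagliardo-Nirenberg interpolation and Young's inequality. Since $\operatorname{div} U=0$, one has $\operatorname{div}(2\mu(\rho)\mathbb{D}U)=\mu(\rho)\Delta U+2\nabla\mu(\rho)\cdot\mathbb{D}U$, so \eqref{d23} rewrites as $-\mu(\rho)\Delta U+\nabla\Pi=F+2\nabla\mu(\rho)\cdot\mathbb{D}U$. Dividing by $\mu(\rho)$ and using $\nabla\Pi/\mu=\nabla(\Pi/\mu)+\Pi\nabla\mu/\mu^{2}$ then recasts this as the classical Stokes system
$$-\Delta U+\nabla\widetilde\Pi=\widetilde F,\quad\operatorname{div}U=0,\quad\widetilde\Pi:=\Pi/\mu(\rho),$$
with right-hand side $\widetilde F:=F/\mu+2\nabla\mu\cdot\mathbb{D}U/\mu-\Pi\nabla\mu/\mu^{2}$. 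The task is now to bound $\|\widetilde F\|_{L^{q}}$ for $q\in\{2,m\}$, extract a small coefficient in front of $\|\nabla^{2}U\|_{L^{q}}$, and absorb.

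For \eqref{d24}, apply the $L^{2}$ Stokes estimate $\|\nabla^{2}U\|_{L^{2}}+\|\nabla\widetilde\Pi\|_{L^{2}}\lesssim\|\widetilde F\|_{L^{2}}$ and bound each piece separately. The term $\|F/\mu\|_{L^{2}}\leq\underline{\mu}^{-1}\|F\|_{L^{2}}$ is immediate, while the critical term $\nabla\mu\cdot\mathbb{D}U/\mu$ is treated by H\"older with exponents $r$ and $2r/(r-2)$ followed by the 3D Gagliardo-Nirenberg inequality
$$\|\mathbb{D}U\|_{L^{2r/(r-2)}}\leq C\|\nabla U\|_{L^{2}}^{1-3/r}\|\nabla^{2}U\|_{L^{2}}^{3/r}.$$
The assumption $r>3$ guarantees a positive exponent on the lower-order factor, so Young's inequality with conjugate exponents $r/3$ and $r/(r-3)$ produces a small multiple of $\|\nabla^{2}U\|_{L^{2}}$, absorbed on the left, plus $C\|\nabla\mu(\rho)\|_{L^{r}}^{r/(r-3)}\|\nabla U\|_{L^{2}}$, which is precisely the second term in \eqref{d24}. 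The pressure piece $\Pi\nabla\mu/\mu^{2}$ is handled by Sobolev embedding: controlling $\|\Pi\|_{L^{2r/(r-2)}}$ through the just-obtained bound on $\|\nabla\widetilde\Pi\|_{L^{2}}$, combined with an $L^{2}$-estimate on $\Pi$ derived from the weak formulation, closes the loop.

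The argument for \eqref{d25} is identical in structure but set in $L^{m}$: apply $L^{m}$ Calder\'on-Zygmund regularity to the Stokes system for $\widetilde F$, and interpolate $\|\mathbb{D}U\|_{L^{mr/(r-m)}}$ between $L^{2}$ (lower order) and $L^{3m/(3-m)}$ (Sobolev embedding of $W^{1,m}$ in three dimensions). The admissibility of this interpolation chain is exactly what dictates the range $m\in[2r/(r+2),r]$, and Young's inequality then produces the explicit power $r(5m-6)/(2m(r-3))$ on $\|\nabla\mu\|_{L^{r}}$. The extra factor $\|(-\Delta)^{-1}\operatorname{div}F\|_{L^{2}}$ comes from an $L^{2}$-bound on the pressure: splitting $(U,\Pi)=(U_{0},\Pi_{0})+(U_{1},\Pi_{1})$ where $(U_{0},\Pi_{0})$ solves the classical Stokes problem driven by $F$ gives $\|\Pi_{0}\|_{L^{2}}=\|(-\Delta)^{-1}\operatorname{div}F\|_{L^{2}}$ directly, while $\Pi_{1}$ is controlled by $\|\nabla U\|_{L^{2}}$ through energy-type testing.

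The principal obstacle is the pressure-velocity coupling introduced by $\Pi\nabla\mu/\mu^{2}$ in $\widetilde F$, which has no analogue in the constant-viscosity Stokes theory and forces an independent $L^{2}$-bound on $\Pi$; this is precisely the structural source of the $\|(-\Delta)^{-1}\operatorname{div}F\|_{L^{2}}$ factor in \eqref{d25}. A secondary technical point is verifying that all H\"older, Sobolev, and Young exponents remain jointly admissible throughout $m\in[2r/(r+2),r]$, which is exactly what pins down that range.
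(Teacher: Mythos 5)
The paper does not prove Lemma~\ref{Lemma-5.1} here; it is cited from \cite{NW2023NS}, so I can only assess your argument on its own merits.

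Your overall strategy (rewrite \eqref{d23} as a constant-coefficient Stokes system with perturbed source, then H\"older, Gagliardo--Nirenberg and Young) is the right one, and your exponent bookkeeping is correct: with
$\|\mathbb{D}U\|_{L^{2r/(r-2)}}\lesssim\|\nabla U\|_{L^{2}}^{1-3/r}\|\nabla^{2}U\|_{L^{2}}^{3/r}$
and Young with conjugate pair $(r/3,\,r/(r-3))$ you indeed recover the power $r/(r-3)$ in \eqref{d24}, and the analogous $L^{m}$ interpolation does give $1/(1-\theta)=r(5m-6)/(2m(r-3))$ in \eqref{d25}. You also correctly identify $\|(-\Delta)^{-1}\operatorname{div}F\|_{L^{2}}$ as arising from an $L^{2}$ control of the pressure in the $L^{m}$ estimate.

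The gap is in \eqref{d24}. Dividing by $\mu$ introduces the term $\Pi\nabla\mu/\mu^{2}$ into $\widetilde F$, and your treatment of it (``controlling $\|\Pi\|_{L^{2r/(r-2)}}$ through \dots combined with an $L^{2}$-estimate on $\Pi$ derived from the weak formulation'') is vague and, if carried out, is circular in a way you cannot close cleanly. Interpolating $\|\Pi\|_{L^{2r/(r-2)}}$ between $\|\Pi\|_{L^{2}}$ and $\|\nabla\Pi\|_{L^{2}}$ leaves $\|\Pi\|_{L^{2}}$ in the estimate, and the only bound for $\|\Pi\|_{L^{2}}$ from the equation (take $\operatorname{div}$ and invert $-\Delta$) is $\|\Pi\|_{L^{2}}\lesssim\|\nabla U\|_{L^{2}}+\|(-\Delta)^{-1}\operatorname{div}F\|_{L^{2}}$; since $(-\Delta)^{-1}\operatorname{div}$ is not bounded $L^{2}\to L^{2}$ in $\mathbb{R}^{3}$, this extra term cannot be removed. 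Your argument would therefore produce \eqref{d24} with an additional $\|(-\Delta)^{-1}\operatorname{div}F\|_{L^{2}}$ on the right-hand side, which the stated \eqref{d24} does not have. The remedy for the $L^{2}$ estimate is not to divide by $\mu$ at all, but to test the equation $-\operatorname{div}(2\mu(\rho)\mathbb{D}U)+\nabla\Pi=F$ against $-\Delta U$. Since $\operatorname{div}U=0$ gives $\int\nabla\Pi\cdot\Delta U\,dx=-\int\Pi\,\Delta(\operatorname{div}U)\,dx=0$, the pressure disappears identically, and using $\operatorname{div}(2\mu\mathbb{D}U)=\mu\Delta U+2\nabla\mu\cdot\mathbb{D}U$ one obtains $\underline{\mu}\|\Delta U\|_{L^{2}}\le\|F\|_{L^{2}}+2\|\nabla\mu\cdot\mathbb{D}U\|_{L^{2}}$; since $\|\nabla^{2}U\|_{L^{2}}=\|\Delta U\|_{L^{2}}$, your Gagliardo--Nirenberg/Young step then closes \eqref{d24} exactly, with no pressure contribution.
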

\par More information can be found in \cite{NW2023NS}, we omit it here for simplicity. Then we introduce auxiliary corollaries, which are a consequence of the
	$\dot{W}^{2,2}$ estimates in Lemma \ref{Lemma-5.1}. Assuming that 
	\begin{equation}\label{mu}
	\sup_{t\in[0,T]}\|\nabla\mu(\rho)\|_{L^{r}\cap L^{3}}\leq 4\|\nabla\mu(\rho_0)\|_{L^{r}\cap L^3}
	\end{equation}
	is valid, then in the light of the above inequality, the following corollaries naturally follow, which will be used repeatedly in the proof of the global existence of the solution.
	
	\begin{Corollary}\label{Corollary-5.1}
	Suppose $(\rho,u,\pi)$ is a unique local strong solution of (\ref{a1}) on $[0,T]$ and satisfies (\ref{mu}), then it holds that
	\begin{equation}\label{dd36}
	\|\nabla^{2} u\|_{L^{2}}\leq C \|\rho u_t\|_{L^{2}}+C\|\nabla u\|_{L^{2}}+C\|\nabla u\|^{3}_{L^{2}},
	\end{equation}
	where $C$ depends only on $\underline{\mu}$, $\overline{\mu}$, $r$ and $\|\nabla\mu(\rho_0)\|_{L^{r}}$.
	\end{Corollary}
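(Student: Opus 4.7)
The plan is to put the momentum equation into the form of the Stokes system \eqref{d23} and then apply Lemma \ref{Lemma-5.1}. Using the continuity equation $\partial_t\rho+\operatorname{div}(\rho u)=0$ and $\operatorname{div} u=0$, the second equation of \eqref{a1} can be rewritten as
\begin{equation*}
-\operatorname{div}(2\mu(\rho)\mathbb{D}u)+\nabla\pi=-\rho u_t-\rho(u\cdot\nabla)u,
\end{equation*}
which is precisely \eqref{d23} with $F=-\rho u_t-\rho(u\cdot\nabla)u$. Since the assumption \eqref{mu} gives $\|\nabla\mu(\rho)\|_{L^r}\le 4\|\nabla\mu(\rho_0)\|_{L^r}$, the factor $C\|\nabla\mu(\rho)\|_{L^r}^{r/(r-3)}$ appearing in \eqref{d24} can be absorbed into a single constant depending only on $\underline\mu,\overline\mu,r$ and $\|\nabla\mu(\rho_0)\|_{L^r}$.

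First I would apply \eqref{d24} directly to obtain
\begin{equation*}
\|\nabla^2 u\|_{L^2}\le C\|\rho u_t\|_{L^2}+C\|\rho(u\cdot\nabla)u\|_{L^2}+C\|\nabla u\|_{L^2}.
\end{equation*}
The main task is then to estimate the convective term $\|\rho(u\cdot\nabla)u\|_{L^2}$ in the right shape. Using the $L^\infty$ bound on $\rho$ (which follows from the transport equation and the bounds on $\mu(\rho)$ together with the assumption \eqref{a6}) together with H\"older's inequality gives
\begin{equation*}
\|\rho(u\cdot\nabla)u\|_{L^2}\lesssim \|u\|_{L^6}\|\nabla u\|_{L^3}\lesssim \|\nabla u\|_{L^2}\|\nabla u\|_{L^3},
\end{equation*}
where the Sobolev embedding $\dot H^1\hookrightarrow L^6$ is used. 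By Gagliardo--Nirenberg interpolation $\|\nabla u\|_{L^3}\lesssim \|\nabla u\|_{L^2}^{1/2}\|\nabla^2 u\|_{L^2}^{1/2}$, whence
\begin{equation*}
\|\rho(u\cdot\nabla)u\|_{L^2}\lesssim \|\nabla u\|_{L^2}^{3/2}\|\nabla^2 u\|_{L^2}^{1/2}.
\end{equation*}

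Finally I would apply Young's inequality to split this as $\tfrac12\|\nabla^2 u\|_{L^2}+C\|\nabla u\|_{L^2}^3$, absorb the $\nabla^2 u$ piece into the left-hand side of the previous inequality, and collect the remaining terms to conclude \eqref{dd36}. The only delicate step is ensuring that the constant in the interpolation and in the Stokes estimate depends only on $\underline\mu,\overline\mu,r$ and $\|\nabla\mu(\rho_0)\|_{L^r}$; this follows from \eqref{mu} and the hypotheses \eqref{a6}. I expect the absorption argument together with controlling the convective term to be the only nontrivial step, all other bounds being direct applications of Lemma \ref{Lemma-5.1}.
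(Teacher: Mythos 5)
Your proposal is correct and follows exactly the same route as the paper: rewrite the momentum equation as the density-dependent Stokes system with source $F=-\rho u_t-\rho u\cdot\nabla u$, apply \eqref{d24}, bound the convective term via $\|u\|_{L^6}\|\nabla u\|_{L^3}\lesssim\|\nabla u\|_{L^2}^{3/2}\|\nabla^2 u\|_{L^2}^{1/2}$ (Sobolev embedding plus Gagliardo--Nirenberg), and absorb by Young's inequality.
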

    \begin{proof}
    The momentum equations $(\ref{a1})_2$ can be rewritten as follows,
    \begin{equation}
    -\operatorname{div}(2\mu(\rho)\mathbb{D}u)+\nabla\pi=F
    \end{equation}
    with $F\stackrel{\text { def }}{=}-\rho u_t-\rho u\cdot\nabla u.$ Due to Lemma \ref{Lemma-5.1}, we achieve
    \begin{equation}
    \begin{aligned}
    \|\nabla^{2} u\|_{L^{2}}
    &\lesssim \|\rho u_t\|_{L^{2}}+\|\nabla u\|_{L^{2}}+\|u\|_{L^6}\|\nabla u\|_{L^3}\\
    &\lesssim \|\rho u_t\|_{L^{2}}+\|\nabla u\|_{L^{2}}+\|\nabla u\|^{\frac{3}{2}}_{L^2}\|\nabla^2 u\|^{\frac{1}{2}}_{L^2}.
    \end{aligned}
    \end{equation}
    Thus, by virtue of Young's inequality, we get (\ref{dd36}).	
    \end{proof}
\begin{Corollary}\label{c-2.1}
	Let $(v,\pi_v)$ be a unique local strong solution of (\ref{a11}) on $[0,T]$ with $v\in L^\infty_{T}(\dot{B}^{s_1}_{p,1})$ $s_1\in[\frac{3}{p},1+\frac{3}{p}]$ and $\Delta v\in L^\infty_{T}(L^\infty).$ Let $(\rho,w,\pi_w)$ is a unique local strong solution of (\ref{a12}) on $[0,T]$ and satisfies (\ref{mu}), then it holds that
	\begin{equation}\label{d36}
	\|\nabla^{2} w\|_{L^{2}}\leq C\|\rho w_t\|_{L^{2}}+C\|\nabla w\|_{L^{2}}+C\|\nabla w\|^{3}_{L^{2}}+C\|\Delta v\|_{L^\infty}+C\|v\|_{\dot{B}^{s_1}_{p,1}},
	\end{equation} 
	where $C$ depends only on $\underline{\mu}$, $\overline{\mu}$, $r$ and $\|\nabla\mu(\rho_0)\|_{L^{r}\cap L^3}$. 
\end{Corollary}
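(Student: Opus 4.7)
The strategy mirrors Corollary~\ref{Corollary-5.1}: I recast the momentum equation in \eqref{a12} as a stationary Stokes system for $w$,
\[
-\operatorname{div}(2\mu(\rho)\mathbb{D}w)+\nabla\pi_w = F,
\]
whose forcing collects every non-viscous contribution:
\[
F\stackrel{\text{def}}{=} -\rho w_t-\rho(v+w)\cdot\nabla w-\rho w\cdot\nabla v+(1-\rho)(\partial_t v+v\cdot\nabla v)+\operatorname{div}(2(\mu(\rho)-1)\mathbb{D}v).
\]
Assumption \eqref{mu} bounds the factor $\|\nabla\mu(\rho)\|_{L^r}^{r/(r-3)}$ appearing in Lemma~\ref{Lemma-5.1} by a constant of the required type, so the lemma at $m=2$ yields
\[
\|\nabla^2 w\|_{L^2}\le C\|F\|_{L^2}+C\|\nabla w\|_{L^2},
\]
and the task reduces to estimating $\|F\|_{L^2}$ by the right-hand side of \eqref{d36}.

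The self-convection is handled exactly as in Corollary~\ref{Corollary-5.1}: $\|\rho w\cdot\nabla w\|_{L^2}\lesssim\|w\|_{L^6}\|\nabla w\|_{L^3}\lesssim\|\nabla w\|_{L^2}^{3/2}\|\nabla^2 w\|_{L^2}^{1/2}$ by $\dot{H}^1\hookrightarrow L^6$ and Gagliardo--Nirenberg, and Young's inequality absorbs the $\|\nabla^2 w\|_{L^2}^{1/2}$ factor at the cost of $C\|\nabla w\|_{L^2}^3$. For the mixed convective terms I use $\dot{B}^{s_1}_{p,1}\hookrightarrow L^\infty$ (valid since $s_1\ge 3/p$) and $\dot{B}^{s_1-1}_{p,1}\hookrightarrow L^3$ to obtain $\|v\|_{L^\infty}+\|\nabla v\|_{L^3}\lesssim\|v\|_{\dot{B}^{s_1}_{p,1}}$, so that $\|\rho v\cdot\nabla w\|_{L^2}+\|\rho w\cdot\nabla v\|_{L^2}\le C\|v\|_{\dot{B}^{s_1}_{p,1}}\|\nabla w\|_{L^2}$; Young's inequality then produces the additive contributions $C\|\nabla w\|_{L^2}+C\|v\|_{\dot{B}^{s_1}_{p,1}}$ appearing in \eqref{d36}.

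The crux is the ``$v$-source'' $(1-\rho)(\partial_t v+v\cdot\nabla v)+\operatorname{div}(2(\mu(\rho)-1)\mathbb{D}v)$, which couples a time derivative with a variable-coefficient term. I invoke $(\ref{a11})_1$ to replace $\partial_t v+v\cdot\nabla v$ by $\Delta v-\nabla\pi_v$, and expand the divergence as $2\nabla\mu(\rho)\cdot\mathbb{D}v+(\mu(\rho)-1)\Delta v$ using $\operatorname{div}\mathbb{D}v=\tfrac12\Delta v$; the $v$-source then collapses to
\[
(\mu(\rho)-\rho)\Delta v+2\nabla\mu(\rho)\cdot\mathbb{D}v-(1-\rho)\nabla\pi_v.
\]
The pure-gradient piece $-\nabla((1-\rho)\pi_v)$ of the last term is absorbed into the Stokes pressure---Lemma~\ref{Lemma-5.1} is insensitive to gradient modifications of $F$---leaving only the commutator $-\pi_v\nabla\rho$. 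Each $\Delta v$ factor is paired with a coefficient of bounded $L^2$-norm (a fixed constant inherited from the assumption on the initial density), producing the $C\|\Delta v\|_{L^\infty}$ summand; the $\nabla v$ and $\pi_v$ factors are estimated via H\"older's inequality with $\|\nabla\mu(\rho)\|_{L^r\cap L^3}$, the relevant Sobolev embeddings of $\dot{B}^{s_1}_{p,1}$, and the Riesz bound $\|\nabla\pi_v\|_{L^2}\lesssim\|v\cdot\nabla v\|_{L^2}$, yielding the $C\|v\|_{\dot{B}^{s_1}_{p,1}}$ summand. Assembling all pieces and absorbing the residual $\varepsilon\|\nabla^2 w\|_{L^2}$ into the left-hand side gives \eqref{d36}; the principal difficulty throughout is precisely the pressure term $-(1-\rho)\nabla\pi_v$, which is not a gradient unless $\rho$ is constant and must be split by Leibniz, with its gradient part hidden in the Stokes pressure and the resulting commutator $\pi_v\nabla\rho$ controlled in $L^2$.
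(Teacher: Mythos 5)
Your overall scaffold (rewrite $(\ref{a12})_2$ as a Stokes system, invoke Lemma~\ref{Lemma-5.1} with $m=2$, treat the quadratic convective terms by Gagliardo--Nirenberg and Young) is the same as the paper's, and the handling of $\rho w_t$, $\rho(v+w)\cdot\nabla w$, $\rho w\cdot\nabla v$ and $\operatorname{div}(2(\mu(\rho)-1)\mathbb{D}v)$ is fine. The departure is in your treatment of the $v$-source $(1-\rho)(\partial_t v + v\cdot\nabla v)$: you substitute $\partial_t v+v\cdot\nabla v=\Delta v-\nabla\pi_v$, regroup to $(\mu(\rho)-\rho)\Delta v+2\nabla\mu(\rho)\cdot\mathbb{D}v-(1-\rho)\nabla\pi_v$, and then split $-(1-\rho)\nabla\pi_v=-\nabla((1-\rho)\pi_v)-\pi_v\nabla\rho$, discarding the pure gradient into the Stokes pressure. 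The discard itself is legitimate: for the divergence-free Stokes problem, modifying $F$ by a gradient only shifts $\Pi$, not $U$, so the $\dot W^{2,2}$ estimate \eqref{d24} applies to $F-\nabla g$ for any $g$.

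The gap is in the residual commutator $\pi_v\nabla\rho$. To place it in $L^2$ you must control $\nabla\rho$ in some $L^q$, and the only hypothesis available in the corollary is \eqref{mu}, a bound on $\|\nabla\mu(\rho)\|_{L^r\cap L^3}$. Since $\nabla\mu(\rho)=\mu'(\rho)\nabla\rho$ and $\mu'$ is allowed to vanish (the paper assumes only $\mu\in W^{3,\infty}$ and $0<\underline\mu\le\mu\le\overline\mu$), a bound on $\nabla\mu(\rho)$ does \emph{not} give a bound on $\nabla\rho$. Your invocation of ``H\"older with $\|\nabla\mu(\rho)\|_{L^r\cap L^3}$'' and the Riesz bound on $\nabla\pi_v$ addresses the wrong factors: the commutator really contains $\nabla\rho$, which nothing in the corollary's hypotheses controls. (Propagating $\rho-1\in B^{3/q}_{q,1}$ in time would require an $L^1_t L^\infty$ bound on $\nabla u$ that is precisely what the global argument is working toward, and would also introduce a dependence in the constant that the corollary does not allow.) The paper's proof sidesteps the issue entirely: it never breaks off a gradient, but instead keeps $(1-\rho)(\partial_t v+v\cdot\nabla v)$ intact and estimates it as $\|1-\rho\|_{L^2}\,\|\partial_t v+v\cdot\nabla v\|_{L^\infty}$, then bounds $\|\partial_t v\|_{L^\infty}$ through the equation \eqref{a11} together with the Besov-level boundedness of the Leray projection (so the pressure contribution is controlled via $\nabla\pi_v=-\mathbb{Q}(v\cdot\nabla v)$ in $\dot B^{s_1-1}_{p,1}\hookrightarrow L^\infty$), and this never produces a $\nabla\rho$ term. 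You should therefore drop the Leibniz split of $(1-\rho)\nabla\pi_v$ and estimate the whole $v$-source directly in $L^2$ via the product $L^2\cdot L^\infty$, as the paper does.

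One smaller slip: the embedding $\dot B^{s_1-1}_{p,1}\hookrightarrow L^3$ that you use to bound $\|\nabla v\|_{L^3}$ holds only for the endpoint $s_1=3/p$; for $s_1>3/p$ one should interpolate $v$ down to $\dot B^{3/p}_{p,1}$ or pair $w$ and $\nabla v$ with different exponents. This does not break the argument but should be stated correctly.
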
  
\begin{proof} 
	\par We rewrite $(\ref{a12})_2$ as follows,
	\begin{equation}\label{d37}
	\begin{aligned}
	-\operatorname{div}(\mu(\rho)\mathbb{D} w)+\nabla \pi=F
	\end{aligned}
	\end{equation}
	with
	$$F\stackrel{\text { def }}{=}-\rho\partial_{t} w-\rho(v+w)\cdot\nabla w-\rho w\cdot\nabla v+(1-\rho)(\partial_{t}v+v\cdot\nabla v)+ \operatorname{div}\left(2(\mu(\rho)-1)\mathbb{D}v\right),$$
	from which and using Lemma \ref{Lemma-5.1}, we get that
	\begin{equation}
	\begin{aligned}
	\|\nabla^{2}w\|_{L^{2}}&\lesssim \|\nabla w\|_{L^{2}}+\|\rho\partial_{t} w\|_{L^{2}}+\|\rho(v+w)\cdot\nabla w\|_{L^{2}}+\|\rho w\cdot\nabla v\|_{L^{2}}\\ &\quad+\|(1-\rho)(\partial_{t}v+v\cdot\nabla v)\|_{L^{2}}+\|\operatorname{div}\left(2(\mu(\rho)-1)\mathbb{D}v\right)\|_{L^{2}}.
	\end{aligned}
	\end{equation}
	Thanks to the Gagliardo-Nirenberg inequality, one has
	\begin{equation}\label{d38}
	\begin{aligned}
	\|\nabla^{2}w\|_{L^{2}}
	&\lesssim \|\nabla w\|_{L^{2}}+\|\rho\partial_{t} w\|_{L^{2}}+\|\nabla w\|^{\frac{3}{2}}_{L^2}\|\nabla^{2} w\|^{\frac{1}{2}}_{L^{2}}\\
	&\quad+\|v\|_{L^\infty}\|\nabla w\|_{L^{2}}+\|1-\rho_0\|_{L^2}\left\{\|\partial_{t} v\|_{L^{\infty}}+\|v\|_{L^\infty}\|\nabla v\|_{L^{\infty}}\right\}\\
	&\quad+\|\mu(\rho_0)-1\|_{L^2}\|\Delta v\|_{L^\infty}+\| \nabla\mu(\rho)\|_{L^3}\|\nabla v\|_{L^{6}}.  
	\end{aligned}
	\end{equation}
	By Young's inequality and embedded inequalities, for $s_1\in[\frac{3}{p},1+\frac{3}{p}],$ we can deduce 
	$$ \|\nabla^{2} w\|_{L^{2}}\lesssim \|\rho\partial_{t} w\|_{L^{2}}+\|\nabla w\|_{L^{2}}+\|\nabla w\|^{3}_{L^{2}}+\|\Delta v\|_{L^\infty}+\|v\|_{\dot{B}^{s_1}_{p,1}},$$
	where we have used the following inequality, 
	$$\begin{aligned}
	\|\partial_{t} v\|_{L^{\infty}}\lesssim &\|v\cdot\nabla v\|_{L^{\infty}}+\|\Delta v\|_{L^{\infty}}+\|\nabla\pi_{v}\|_{L^{\infty}}
	\lesssim \|v\cdot\nabla v\|_{L^{\infty}}+\|\Delta v\|_{L^{\infty}}\\
	\lesssim & \|v\|_{L^{\infty}}\|\nabla v\|_{L^{\infty}}+\|\Delta v\|_{L^{\infty}}\lesssim \|v\|_{\dot{B}^{s_1}_{p,1}}+\|\Delta v\|_{L^{\infty}}.
	\end{aligned}$$
	\end{proof}

\par Then we shall derive some conclusions about the estimate of the pressure function and the proof process can be referred to \cite{2017B,2010D}. We omit it here for simplicity.

\begin{Lemma}\label{pi} $(\cite{2017B, 2010D})$
    Let $\frac{6}{5}<p<6$ and $1 \leq q<\infty$ satisfy $\frac{1}{2}-\frac{1}{q}\leq\frac{1}{p}\leq\frac{1}{2}+\frac{1}{q}$, $a \in \dot{B}_{q,1}^{\frac{3}{q}}\left(\mathbb{R}^3\right)$ with
	$$
	1+a \geq \underline{a}>0.
	$$
	Let $f \in \dot{B}_{p,2}^{\frac{3}{p}-\frac{3}{2}}(\mathbb{R}^3)$ and $\nabla \pi \in \dot{B}_{p,2}^{\frac{3}{p}-\frac{3}{2}}(\mathbb{R}^3)$ solve the equation
	\begin{equation}
	-\operatorname{div}((1+a) \nabla \pi)=\operatorname{div} f.
	\end{equation}
	Then we have
	\begin{equation}\label{p2}
	\|\nabla \pi\|_{\dot{B}_{p, 2}^{\frac{3}{p}-\frac{3}{2}}} \lesssim(1+\|a\|_{\dot{B}_{q,1}^{\frac{3}{q}}})\|\mathbb{Q} f\|_{\dot{B}_{p,2}^{\frac{3}{p}-\frac{3}{2}}},
	\end{equation}
	where $\mathbb{Q}=I d-\mathbb{P}$ and $\mathbb{P}$ is the Leray projector over divergence-free vector fields.
\end{Lemma}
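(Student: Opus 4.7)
The plan is to convert the variable-coefficient elliptic equation into a perturbation of the Poisson equation, then combine Littlewood--Paley decomposition, paraproduct calculus and the coercivity $1+a\ge\underline a>0$ to obtain the Besov-space estimate.

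First, expanding the divergence gives $-\Delta\pi=\operatorname{div}(a\nabla\pi+f)$, and inverting the Laplacian yields
\[
\nabla\pi=-\mathbb{Q}(a\nabla\pi)-\mathbb{Q} f,
\]
where $\mathbb{Q}=-\nabla(-\Delta)^{-1}\operatorname{div}$ is the projector onto gradient fields (note the right-hand side of the original equation depends only on $\mathbb{Q}f$ because $\operatorname{div}\mathbb{P}f=0$). Since $\mathbb{Q}$ is a zero-order Fourier multiplier it is bounded on every homogeneous Besov space, so matters reduce to controlling $\|a\nabla\pi\|_{\dot B^{3/p-3/2}_{p,2}}$. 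I would then apply Bony's decomposition $a\nabla\pi=T_a\nabla\pi+T_{\nabla\pi}a+R(a,\nabla\pi)$ together with the standard paraproduct and remainder bounds; the hypothesis $\tfrac12-\tfrac1q\le\tfrac1p\le\tfrac12+\tfrac1q$ with $p\in ]\tfrac65,6[$ is precisely what secures the product law
\[
\|a\nabla\pi\|_{\dot B^{3/p-3/2}_{p,2}}\lesssim \|a\|_{\dot B^{3/q}_{q,1}}\|\nabla\pi\|_{\dot B^{3/p-3/2}_{p,2}},
\]
using the embedding $\dot B^{3/q}_{q,1}\hookrightarrow L^\infty$ in dimension $3$ for the $T_a\nabla\pi$ piece.

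The principal obstacle is that directly combining the two displays above only closes the estimate for $a$ of small norm, whereas the statement permits arbitrary $a$ subject only to $1+a\ge\underline a$. To bypass this, I would apply $\dot\Delta_j$ directly to the \emph{divergence-form} equation, producing
\[
-\operatorname{div}\bigl((1+a)\nabla\dot\Delta_j\pi\bigr)=\operatorname{div}\dot\Delta_j f+\operatorname{div}[a,\dot\Delta_j]\nabla\pi,
\]
and test against a suitable $L^p$-dual of $\dot\Delta_j\pi$. The coercivity $1+a\ge\underline a$ absorbs the principal part, yielding
\[
\|\dot\Delta_j\nabla\pi\|_{L^p}\lesssim \underline a^{-1}\Bigl(\|\dot\Delta_j\mathbb{Q}f\|_{L^p}+\|[a,\dot\Delta_j]\nabla\pi\|_{L^p}\Bigr),
\]
and the commutator is controlled, by a paraproduct commutator estimate in the spirit of Lemma~\ref{Lemma-2.3}, by $c_j\,2^{-j(3/p-3/2)}\|a\|_{\dot B^{3/q}_{q,1}}\|\nabla\pi\|_{\dot B^{3/p-3/2}_{p,2}}$ with $(c_j)\in\ell^2$.

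The technically delicate step is running the $L^p$ energy estimate for $p\neq 2$; this requires either a Meyers-type $L^p$-elliptic regularity for divergence-form operators with bounded measurable coefficients (for which the restriction $p\in ]\tfrac65,6[$ is sharp in dimension $3$), or a dyadic coefficient-freezing argument reducing each frequency block to the constant-coefficient Laplacian up to a controllable error. Once this is in place, multiplying by the weight $2^{j(3/p-3/2)}$ and taking the $\ell^2$-sum in $j$ delivers the desired bound with prefactor $(1+\|a\|_{\dot B^{3/q}_{q,1}})$: the $1$ captures the direct contribution of $\mathbb{Q}f$ (made possible by the coercivity), while the $\|a\|$ factor absorbs the commutator contribution that quantifies how much the variable coefficient perturbs the constant-coefficient Poisson estimate.
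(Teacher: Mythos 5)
The paper does not prove this lemma --- it is borrowed from Burtea~\cite{2017B} and Danchin~\cite{2010D} with the proof explicitly omitted --- so your argument can only be checked on its own terms. Your general strategy (apply $\dot{\Delta}_j$ to the divergence-form equation, use the coercivity from $1+a\geq\underline a$, and control a commutator) is the right one, and you correctly diagnose why the naive perturbative identity $\nabla\pi=-\mathbb{Q}(a\nabla\pi)-\mathbb{Q}f$ cannot close for large $a$. The gap is in the closing step. The commutator bound you assert,
\[
\|[a,\dot{\Delta}_j]\nabla\pi\|_{L^p}\lesssim c_j\,2^{-j(3/p-3/2)}\|a\|_{\dot{B}^{3/q}_{q,1}}\|\nabla\pi\|_{\dot{B}^{3/p-3/2}_{p,2}},
\]
is indeed what Bony's decomposition gives (the $2^{-j}$ gain from $[\dot{\Delta}_j,T_a]$ is exactly eaten by $\|\nabla\dot S_{k-1}a\|_{L^\infty}\lesssim 2^{k}\|a\|_{\dot{B}^{3/q}_{q,1}}$ with $|k-j|\leq 4$), and after multiplying by $2^{j(3/p-3/2)}$ and taking $\ell^2$ sums it produces
\[
\|\nabla\pi\|_{\dot{B}^{3/p-3/2}_{p,2}}\lesssim\|\mathbb{Q}f\|_{\dot{B}^{3/p-3/2}_{p,2}}+\|a\|_{\dot{B}^{3/q}_{q,1}}\|\nabla\pi\|_{\dot{B}^{3/p-3/2}_{p,2}},
\]
which only closes if $\|a\|_{\dot{B}^{3/q}_{q,1}}$ is small: $X\le A+\|a\|X$ does not yield $X\le(1+\|a\|)A$. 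Your concluding sentence, that ``the $\|a\|$ factor absorbs the commutator contribution,'' conflates the coefficient of a self-referential term with a finished prefactor --- coercivity removed the principal part, but it never touched the commutator. The missing idea, present in the cited references, is a frequency truncation $a=\dot{S}_k a+(a-\dot{S}_ka)$: pick $k$ so that $\|a-\dot{S}_ka\|_{\dot{B}^{3/q}_{q,1}}$ is small enough to absorb, while for the smooth piece $\dot{S}_ka$ one has $\|\nabla\dot{S}_ka\|_{L^\infty}<\infty$, so the commutator genuinely gains $2^{k-j}$ (absorbable for $j>k+O(1)$), leaving only finitely many low blocks $j\lesssim k$ to handle directly.

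Conversely, the step you flag as ``technically delicate'' --- the $L^p$ energy estimate for the divergence-form operator --- is actually not a difficulty, and neither Meyers-type regularity nor coefficient-freezing is required. Testing the $\dot{\Delta}_j$-localized equation against $|\dot{\Delta}_j\pi|^{p-2}\dot{\Delta}_j\pi$ and using the pointwise bound $1+a\ge\underline a$ gives
\[
(p-1)\int_{\mathbb{R}^3}(1+a)\,|\dot{\Delta}_j\pi|^{p-2}|\nabla\dot{\Delta}_j\pi|^2\,dx\;\ge\;\underline a\int_{\mathbb{R}^3}(-\Delta\dot{\Delta}_j\pi)\,|\dot{\Delta}_j\pi|^{p-2}\dot{\Delta}_j\pi\,dx\;\gtrsim\;\underline a\,2^{2j}\|\dot{\Delta}_j\pi\|_{L^p}^p,
\]
where the final inequality is the standard lower bound for the Laplacian on functions spectrally supported in an annulus, valid for all $1<p<\infty$ (see~\cite{2011BCD}); the variable coefficient enters only through the elementary pointwise lower bound and never obstructs this step.
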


\section{Local well-posedness of (\ref{a1})}
In this section, we will state the local well-posedness of system \eqref{a1}. Compared with the original system, we intend to study the alternatively equivalent system. Specifically, since the density $\rho_0$ is away from zero, we denote by $a\stackrel{\text{def}}{=}\rho^{-1}-1$ and $\widetilde{\mu}(a)\stackrel{\text{def}}{=}\mu(\rho)$, then the system (\ref{a1}) can be equivalently reformulated as
\begin{equation} \label{a2}
\left\{\begin{array}{l}
\partial_{t}a+u\cdot\nabla a=0,\quad (t,x)\in \mathbb{R}^{+}\times\mathbb{R}^{3},\\
\partial_{t}u+u\cdot\nabla u-{\rm  div }(2(1+b)\mathbb{D} u)+(1+a)\nabla\pi=-\nabla\lambda~\mathbb{D}u,\\
\operatorname{div} u=0,\\
(a,u)|_{t=0}=(a_{0},u_{0}),
\end{array}\right.
\end{equation}
where
\begin{equation} \label{a3}
b \stackrel{\text { def }}{=}(1+a) \tilde{\mu} (a)-1 \quad \text { and } \quad \lambda \stackrel{\text { def }}{=}2 \int_0^{a} \tilde{\mu}(s)ds.
\end{equation} 
For simplicity, we first start with the local well-posedness theory of Theorem 3.1 of \cite{2022q} in the following.
\begin{Lemma}\label{Lemma-2} $(\cite{2022q})$
	Let $1<q\leq p<6$ with $\frac{1}{2}-\frac{1}{p}<\frac{1}{q}\leq \frac{1}{p}+\frac{1}{3}.$ Assume that $a_0 \in B_{q, 1}^{\frac{3}{q}}\left(\mathbb{R}^3\right),$ $u_0 \in \dot{B}_{p, 1}^{\frac{3}{p}-1}\left(\mathbb{R}^3\right)$ with $\operatorname{div} u_0=0$. There exits a small constant $\varepsilon_{0}$ depending on $\|a_0\|_{B_{q, 1}^{\frac{3}{q}}}$ so that if
	\begin{equation}\label{aa12}
	\left\|u_0\right\|_{\dot{B}_{p, 1}^{\frac{3}{p}-1}} \leq \varepsilon_{0},
	\end{equation}
	then there is a time $T^*\geq1,$ such that (\ref{a2}) has a unique local solution $a$ $\in$ $\mathcal{C}_b([0, T^*];$ $B_{q, 1}^{\frac{3}{q}}(\mathbb{R}^3))$, $ u\in\mathcal{C}_b([0, T^*] ; \dot{B}_{p, 1}^{\frac{3}{p}-1}(\mathbb{R}^3))$ $\cap $ $L^1([0, T^*] ;$ $ \dot{B}_{p, 1}^{\frac{3}{p}+1}(\mathbb{R}^3))$
	and $u_t,$ $\nabla\pi$ $\in $ $L^1([0, T^*] ;$ $ \dot{B}_{p,1}^{\frac{3}{p}-1}(\mathbb{R}^3)).$ 
\end{Lemma}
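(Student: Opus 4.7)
The plan is a Friedrichs-type iteration scheme adapted to the variable-coefficient structure of \eqref{a2}. I would set $(a^0,u^0,\nabla\pi^0)=(0,0,0)$ and, given $(a^n,u^n,\nabla\pi^n)$, first define $a^{n+1}$ as the solution of the linear transport equation $\partial_t a^{n+1}+u^n\cdot\nabla a^{n+1}=0$ with datum $a_0$, and then solve the constant-coefficient heat-Stokes system
\[
\partial_t u^{n+1}-\Delta u^{n+1}+\nabla\pi^{n+1}=-u^n\cdot\nabla u^n+\operatorname{div}(2b^n\mathbb{D}u^n)-a^n\nabla\pi^n-\nabla\lambda^n\cdot\mathbb{D}u^n,\quad \operatorname{div} u^{n+1}=0,
\]
with $u^{n+1}|_{t=0}=u_0$, where $b^n,\lambda^n$ are built from $a^n$ by \eqref{a3}.

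The uniform estimates proceed in two steps. Lemma \ref{Lemma-2.4} controls $\|a^{n+1}\|_{\widetilde L^\infty_T B^{3/q}_{q,1}}$ by $\|a_0\|_{B^{3/q}_{q,1}}\exp\bigl(C\int_0^T\|\nabla u^n\|_{\dot B^{3/p}_{p,1}\cap L^\infty}\,d\tau\bigr)$, so on an interval $[0,T^*]$ with $T^*\geq 1$ the iterate $a^{n+1}$ stays in a fixed ball around $\|a_0\|_{B^{3/q}_{q,1}}$ provided $\|u^n\|_{L^1_T\dot B^{3/p+1}_{p,1}}$ remains small. Maximal regularity for the heat semigroup then gives $\|u^{n+1}\|_{\widetilde L^\infty_T\dot B^{3/p-1}_{p,1}\cap L^1_T\dot B^{3/p+1}_{p,1}}$ bounded by $\|u_0\|_{\dot B^{3/p-1}_{p,1}}$ plus the $L^1_T\dot B^{3/p-1}_{p,1}$-norm of the right-hand side; the latter is controlled via Bony's decomposition \eqref{a19} and product laws in Besov spaces under the constraint $\tfrac{1}{2}-\tfrac{1}{p}<\tfrac{1}{q}\leq\tfrac{1}{p}+\tfrac{1}{3}$, combined with Lemma \ref{pi} applied to $\nabla\pi^n$. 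The smallness $\|u_0\|_{\dot B^{3/p-1}_{p,1}}\leq\varepsilon_0(\|a_0\|_{B^{3/q}_{q,1}})$ closes a bootstrap showing that the whole sequence lies in a fixed ball of the product space on a uniform interval $[0,T^*]$ with $T^*\geq 1$.

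Next, I would show that $(a^{n+1}-a^n,u^{n+1}-u^n)$ is contracting in a weaker norm with Besov index dropped by one (e.g.\ $B^{3/q-1}_{q,1}\times\dot B^{3/p-2}_{p,1}$), to absorb the derivative loss when transport or product estimates are applied to differences of the nonlinearities. Passage to the limit is then routine, time-continuity is recovered from the equations, and uniqueness follows by running the same difference argument on any two solutions sharing the data.

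The principal obstacle is the coupling of $\nabla\pi$ with $a$ through the variable-coefficient elliptic equation $-\operatorname{div}((1+a)\nabla\pi)=\operatorname{div} f$ arising by taking the divergence of the momentum equation; Lemma \ref{pi} is essential here because the factor $(1+\|a\|_{\dot B^{3/q}_{q,1}})$ multiplying $\|\mathbb{Q}f\|$ cannot be made small, and only the smallness of $u_0$ makes the source $\mathbb{Q}f$ small. A secondary technical difficulty is that the critical endpoint $r=1$ in the product and commutator estimates forces one to work throughout with the tilde spaces $\widetilde L^\infty_T\dot B^s_{p,1}$ rather than $L^\infty_T\dot B^s_{p,1}$, and to keep careful track of the admissible exponent ranges when $p$ and $q$ differ.
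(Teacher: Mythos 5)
The paper itself does not prove Lemma \ref{Lemma-2}; it is quoted verbatim from Theorem~3.1 of \cite{2022q}, so what follows is an assessment of your reconstruction rather than a comparison with an in-paper argument.

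Your iteration scheme contains a genuine gap that prevents it from proving the lemma in the stated generality. You solve a \emph{constant-coefficient} heat--Stokes system for $u^{n+1}$ and place the entire variable-viscosity contribution $\operatorname{div}(2b^n\mathbb{D}u^n)$ on the right-hand side. This term is second order and linear in $u^n$, and the relevant product law gives
\begin{equation*}
\|\operatorname{div}(2b^n\mathbb{D}u^n)\|_{L^1_T(\dot B^{3/p-1}_{p,1})}\ \lesssim\ \|b^n\|_{\widetilde L^\infty_T(B^{3/q}_{q,1})}\,\|u^n\|_{L^1_T(\dot B^{3/p+1}_{p,1})}.
\end{equation*}
Feeding this through maximal regularity for the heat--Stokes operator produces an inequality of the schematic form
\begin{equation*}
\|u^{n+1}\|_{L^1_T(\dot B^{3/p+1}_{p,1})}\ \leq\ C\|u_0\|_{\dot B^{3/p-1}_{p,1}}+C\,\|b^n\|_{\widetilde L^\infty_T(B^{3/q}_{q,1})}\,\|u^n\|_{L^1_T(\dot B^{3/p+1}_{p,1})}+\text{(quadratic terms)} .
\end{equation*}
The prefactor $\|b^n\|_{\widetilde L^\infty_T(B^{3/q}_{q,1})}$ is comparable to $\|a_0\|_{B^{3/q}_{q,1}}$ and is \emph{not} assumed small in Lemma \ref{Lemma-2}; moreover, being a scaling-critical quantity, it cannot be tamed by shrinking $T$. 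Consequently the coefficient $C\|b^n\|$ multiplying $\|u^n\|$ need not be below $1$, and the bootstrap you invoke does not close. Taking $\varepsilon_0$ small only controls the data and the genuinely nonlinear terms; it does nothing for this term, which is \emph{linear} in $u^n$. Your concluding paragraph already senses the issue with the pressure estimate, but you do not flag that the same large prefactor sits in front of a term that must be absorbed by the parabolic gain.

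The standard way to repair this — and what \cite{2022q} does, and what this paper itself does in the proof of Proposition \ref{Proposition-4.3} via equation \eqref{c57} — is a low-/high-frequency truncation of the coefficient: write $1+b=(1+\dot S_k b)+(b-\dot S_k b)$ and keep the smooth coefficient $1+\dot S_k b$ inside the principal operator $-\operatorname{div}(2(1+\dot S_k b)\mathbb{D}u)$. The high-frequency remainder $b-\dot S_k b$ \emph{can} be made arbitrarily small in $\dot B^{3/q}_{q,1}$ by taking $k$ large (since $b\in B^{3/q}_{q,1}$ implies the dyadic tail converges), so putting only $\operatorname{div}(2(b-\dot S_k b)\mathbb{D}u)$, $(a-\dot S_k a)\nabla\pi$, and $\nabla(\lambda-\dot S_k\lambda)\mathbb{D}u$ into the source $E_k$ restores the smallness needed for the fixed-point argument. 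The price is that one then needs \emph{a priori} estimates for the parabolic operator with rough-but-truncated coefficient $1+\dot S_k b$, which one handles by commuting $\dot\Delta_j$ through the operator and controlling $[\dot S_k b;\dot\Delta_j]\mathbb{D}u$ via Lemma \ref{Lemma-2.3}; this introduces the factor $2^{(1+3/q)k}\|\dot S_k b\|_{L^q}$ that appears throughout Proposition \ref{Proposition-4.3}. The remainder of your plan — the transport estimate for $a^{n+1}$, the use of Lemma \ref{pi} for the pressure, and the contraction in a space of one lower regularity — is sound once this decomposition is in place.
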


\begin{Remark}
	In comparison with the local solution in Lemma \ref{Lemma-2}, we further restrict the range of $p,q$ when showing that $T^*$ can be extended to $+\infty$ for (\ref{a1}). The reason for this is that we need to prove the propagation of the regularity of $u$ in $\dot{H}^{-2\delta}(\mathbb{R}^{3})$ with $\delta\in]1/2,3/4[$, which guarantees the decay estimates of the velocity field $u$ at infinity. 
\end{Remark}

\begin{Proposition}\label{Proposition-4.3} 	
	Let $p$, $q$, $\delta$, $u_0$ be given by Theorem \ref{Theorem-1}. Let $a,$ $b,$ $\lambda\in \widetilde{L}^{\infty}_{T}(B^{\frac{3}{q}}_{q,1})$ be functions of (\ref{a2}) satisfying $1+a\geq \underline{a}>0$ and $1+b\geq \underline{b}>0$. Let $(u,\nabla\pi)$ be a sufficiently smooth solution of the system (\ref{a2}) such that $u$ $\in$ $\mathcal{C}([0, T] ; \dot{B}_{p, 1}^{\frac{3}{p}-1}(\mathbb{R}^3))$ $\cap $ $L^1([0, T];$ $\dot{B}_{p, 1}^{\frac{3}{p}+1}(\mathbb{R}^3)),$ then there exists a sufficiently large integer $k\in\mathbb{Z}$ and for any $t\in[0,T]$ one has  
	\begin{equation}\label{c56}
	\begin{aligned}
	&\|u\|_{\widetilde{L}^{\infty}_{t}(\dot{H}^{-2\delta})}+\|u\|_{\widetilde{L}^{1}_{t}(\dot{H}^{2(1-\delta)})}+\|\nabla\pi\|_{\widetilde{L}^{1}_{t}(\dot{H}^{-2\delta})}\\\leq& C\bigg\{\|u_{0}\|_{\dot{H}^{-2\delta}}+t^{\frac{1}{4}}2^{(\frac{3}{2}-2\delta)k}(1+\|u\|_{L^{\infty}_{t}(\dot{B}^{-1+\frac{3}{p}}_{p,1})})\|u\|_{L^{\infty}_{t}(\dot{B}^{-1+\frac{3}{p}}_{p,1})}^{\frac{1}{4}} \|u\|_{L^{1}_{t}(\dot{B}^{1+\frac{3}{p}}_{p,1})}^{\frac{3}{4}}\bigg\}\\
	&\quad\times\exp\bigg\{C\|u\|_{L^{1}_{t}(\dot{B}^{1+\frac{3}{p}}_{p,1})}+Ct2^{(2+\frac{6}{q})k}\|(b,\lambda)\|^{2}_{L^{q}}+C\|(a,b,\lambda)\|_{\widetilde{L}^{\infty}_{t}(B_{q,1}^{\frac{3}{q}})}\bigg\}.
	\end{aligned}
	\end{equation}  
\end{Proposition}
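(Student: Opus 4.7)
The plan is to derive a dyadic $L^{2}$ energy estimate per Littlewood--Paley block, weight it with $2^{-2\delta j}$ and sum in $\ell^{2}(\mathbb{Z})$ to recover the three norms on the left of \eqref{c56}, then estimate each nonlinear piece in $\widetilde{L}^{1}_{t}(\dot{H}^{-2\delta})$ using Bony's decomposition combined with a low/high frequency split at threshold $k$, handle the non-constant coefficient pressure term $a\nabla\pi$ via Lemma \ref{pi}, and conclude by Gronwall. First I would rewrite the momentum equation of \eqref{a2} as the perturbed heat equation
$$
\partial_{t}u-\Delta u+\nabla\pi=-u\cdot\nabla u+\operatorname{div}(2b\,\mathbb{D}u)-a\nabla\pi-\nabla\lambda\cdot\mathbb{D}u,
$$
apply $\dot{\Delta}_{j}$, take the $L^{2}$ inner product with $\dot{\Delta}_{j}u$, use $\operatorname{div} u=0$ to cancel the linear pressure contribution, and exploit Bernstein's inequality to gain the parabolic factor $2^{2j}\|\dot{\Delta}_{j}u\|_{L^{2}}^{2}$. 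This produces the block estimate
$$
\|\dot{\Delta}_{j}u\|_{L^{\infty}_{t}(L^{2})}+2^{2j}\|\dot{\Delta}_{j}u\|_{L^{1}_{t}(L^{2})}\lesssim \|\dot{\Delta}_{j}u_{0}\|_{L^{2}}+\|\dot{\Delta}_{j}F\|_{L^{1}_{t}(L^{2})},
$$
with $F=-u\cdot\nabla u+\operatorname{div}(2b\,\mathbb{D}u)-a\nabla\pi-\nabla\lambda\cdot\mathbb{D}u$. Multiplying by $2^{-2\delta j}$ and summing in $\ell^{2}(\mathbb{Z})$ converts the left-hand side into the two kinetic terms of \eqref{c56}, reducing matters to controlling $\|F\|_{\widetilde{L}^{1}_{t}(\dot{H}^{-2\delta})}$ in terms of $\|u_{0}\|_{\dot{H}^{-2\delta}}$ and the norms appearing on the right of \eqref{c56}.

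Next I would estimate each piece of $F$. For the convective term $u\cdot\nabla u=\operatorname{div}(u\otimes u)$, real interpolation between $L^{\infty}_{t}(\dot{B}^{-1+3/p}_{p,1})$ and $L^{1}_{t}(\dot{B}^{1+3/p}_{p,1})$ with $\theta=3/4$ gives $u\in L^{4/3}_{t}(\dot{B}^{1/2+3/p}_{p,1})$ with norm bounded by $\|u\|_{L^{\infty}}^{1/4}\|u\|_{L^{1}}^{3/4}$, exactly matching the interpolation exponents in \eqref{c56}; Bony's paraproduct together with the standard product law then controls $u\otimes u$ in $L^{4/3}_{t}$ of an appropriate Besov space, with one $u$-factor absorbed into $(1+\|u\|_{L^{\infty}_{t}(\dot{B}^{-1+3/p}_{p,1})})$. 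Hölder in time contributes the $t^{1/4}$ factor when passing from $L^{4/3}_{t}$ to $L^{1}_{t}$. To transfer from the Besov-$p$ scale (where $u$ naturally lives) to the Sobolev-$2$ scale $\dot{H}^{-2\delta}$, I would split the frequency at level $k$: for $j\leq k$, Bernstein between $L^{p}$ and $L^{2}$ costs a factor that simplifies precisely to $2^{(3/2-2\delta)k}$ under the admissibility constraints on $(p,q,\delta)$ of Theorem \ref{Theorem-1}; for $j>k$, the extra high-frequency regularity of $u$ is used directly through Bony.

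For the coefficient terms $\operatorname{div}(2b\,\mathbb{D}u)$ and $\nabla\lambda\cdot\mathbb{D}u$, I would use Bony's decomposition and Lemma \ref{Lemma-2.3} together with the algebra $B^{3/q}_{q,1}$ and the bound $a,b,\lambda\in\widetilde{L}^{\infty}_{t}(B^{3/q}_{q,1})$ to produce terms linear in $u$ (in the target norm) times functions of $(a,b,\lambda)$-norms. For the bad pressure term $a\nabla\pi$, I would take the divergence of the momentum equation and use $\operatorname{div} u=0$ to derive the elliptic equation $-\operatorname{div}((1+a)\nabla\pi)=\operatorname{div}(\cdots)$, then apply Lemma \ref{pi} (whose hypotheses on $(p,q)$ are precisely those of Theorem \ref{Theorem-1}) to control $\nabla\pi$ with multiplicative constant $(1+\|a\|_{\dot{B}^{3/q}_{q,1}})$ and plug back. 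An analogous low-frequency Bernstein argument at level $k$ applied to the $L^{q}$-part of $(b,\lambda)$ yields the $t\cdot 2^{(2+6/q)k}\|(b,\lambda)\|_{L^{q}}^{2}$ factor in the Gronwall exponential. Collecting the linear-in-$u$ contributions and applying Gronwall's inequality then produces the exponential factor and completes \eqref{c56}.

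The main obstacle is the careful low/high frequency split at level $k$ and the attendant bookkeeping of Bernstein factors. Because the critical data lives on the $L^{p}$-Besov scale while the target $\dot{H}^{-2\delta}$ is on the $L^{2}$-Sobolev scale, propagating the negative regularity requires a scale transition whose cost is $2^{(3/2-2\delta)k}$; matching the low-frequency Bernstein deficit $3/p-3/2$ with the Sobolev gap $-2\delta$, jointly with the algebraic constraints $\max\{1/p,1-2/p\}\leq 1/q\leq 1/p+1/3$ of Theorem \ref{Theorem-1}, is precisely what restricts $p$ to $]1,9/2]$ rather than the wider range $]1,6[$ of the local theory in Lemma \ref{Lemma-2}.
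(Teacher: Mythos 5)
Your plan has a genuine structural gap: you propose to rewrite $(\ref{a2})_2$ as a perturbation of the constant-coefficient heat operator $\partial_{t}u-\Delta u$, treating $\operatorname{div}(2b\,\mathbb{D}u)$ as a source. But $b$ is \emph{not} assumed small here — no smallness is imposed on the density fluctuation. Expanding $\operatorname{div}(2b\,\mathbb{D}u)=b\Delta u+2\nabla b\cdot\mathbb{D}u$ (using $\operatorname{div} u=0$), the term $b\Delta u$ is second order in $u$ with a non-small coefficient: estimating it in $\widetilde{L}^{1}_{t}(\dot{H}^{-2\delta})$ produces something like $\|b\|_{L^{\infty}}\|u\|_{\widetilde{L}^{1}_{t}(\dot{H}^{2(1-\delta)})}$, which sits at the same order as the parabolic gain on the left, cannot be absorbed, and cannot be handled by Gronwall (Gronwall requires a coefficient integrable in time multiplying a \emph{lower-order} quantity). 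Even if you split $b=\dot{S}_{k}b+(b-\dot{S}_{k}b)$ as you mention, the low-frequency part $\dot{S}_{k}b\Delta u$ is unaffected: $\|\dot{S}_{k}b\|_{L^{\infty}}$ is not small. This is precisely why the paper does \emph{not} pass to the constant-coefficient heat equation. Instead it keeps $-\operatorname{div}(2(1+\dot{S}_{k}b)\mathbb{D}u)$ on the left as a genuine variable-coefficient elliptic operator, using the lower bound $1+\dot{S}_{k}b\geq\underline{b}/2$ (valid for $k$ large) to extract the parabolic gain $2^{2j}\|\dot{\Delta}_{j}u\|_{L^{2}}$ from $\int(1+\dot{S}_{k}b)|\mathbb{D}\dot{\Delta}_{j}u|^{2}\,dx$. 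Only the commutator $[\dot{S}_{k}b;\dot{\Delta}_{j}]\mathbb{D}u$ lands on the right, and a commutator gains one derivative, so via Lemma \ref{Lemma-2.3}, interpolation $\|u\|_{\dot{H}^{1-2\delta}}\lesssim\|u\|_{\dot{H}^{-2\delta}}^{1/2}\|u\|_{\dot{H}^{2(1-\delta)}}^{1/2}$, and Young's inequality it contributes a Gronwall term with weight $2^{(2+\frac{6}{q})k}\|\dot{S}_{k}b\|_{L^{q}}^{2}$ plus a small fraction of $\|u\|_{\widetilde{L}^{1}_{t}(\dot{H}^{2(1-\delta)})}$. The high-frequency remainder $(b-\dot{S}_{k}b)$ is small in $\dot{B}^{3/q}_{q,1}$ for $k$ large and is absorbed. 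Without this structure your block estimate does not close.

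A secondary issue: the right side of the paper's block inequality eventually forces you to control $\|\nabla\pi\|_{L^{1}_{t}(L^{2})}$ (or $\|\nabla\pi\|_{\widetilde{L}^{1}_{t}(\dot{B}^{6/p-3/2}_{p/2,2})}$ when $p>4$), and doing so without reintroducing unbounded terms requires three separate regimes: $p\in]1,2[$ via the embedding $\dot{B}^{3/p-3/2}_{p,2}\hookrightarrow L^{2}$, $p\in[2,4]$ via a direct $\dot{H}^{-1}$ estimate of the divergence form, and $p\in]4,9/2]$ via Lemma \ref{pi} together with a re-estimate of the $\dot{S}_{k}a\nabla\pi$ term (Lemma \ref{pi2}). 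Your sketch invokes Lemma \ref{pi} once but does not address this case distinction, which is an essential part of how the restriction $p\leq 9/2$ (rather than $p<6$) arises, in addition to the scale transition you correctly identify. The low/high split you describe is applied to $u$ across the $L^{p}\to L^{2}$ scales; the crucial split in the paper is applied to the \emph{coefficients} $a,b,\lambda$, with the low-frequency parts retained inside the elliptic operator and the pressure term rather than moved to the right-hand side.
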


\begin{proof}
	We first deduce from  $1+a\geq \underline{b}$ and $1+b\geq\underline{b}$ that
	$$1+\dot{S}_{k}a=1+a+(\dot{S}_{k}a-a)\geq \frac{\underline{a}}{2}~{\rm and}~ 1+\dot{S}_{k}b=1+b+(\dot{S}_{k}b-b)\geq \frac{\underline{b}}{2}.$$
	Correspondingly, we rewrite the $u$ equation of $(\ref{a2})_2$ as 
	\begin{equation}\label{c57}
	\begin{aligned}
	\partial_{t}u+u\cdot\nabla u-\operatorname{div}(2(1+\dot{S}_{k}b)\mathbb{D}u)+\nabla\pi
	=E_{k}-\dot{S}_{k}a\nabla\pi-\nabla\dot{S}_{k}\lambda~\mathbb{D}u
	\end{aligned}
	\end{equation}
	with
	$$\begin{aligned}
	E_{k}\stackrel{\text { def }}{=}&\operatorname{div}(2(b-\dot{S}_{k}b)\mathbb{D}u)-\nabla(\lambda-\dot{S}_{k}\lambda)\mathbb{D}u-(a-\dot{S}_{k}a)\nabla\pi.
	\end{aligned}$$
	\vskip0.5cm
	\par Firstly, we estimate that $\|u\|_{\widetilde{L}^{\infty}_{t}(\dot{H}^{-2\delta})}$ and $\|u\|_{\widetilde{L}^{1}_{t}(\dot{H}^{2-2\delta})}$. 
	
	Let $\mathbb{P}=I+\nabla(-\Delta)^{-1}\operatorname{div}$~be the Leray projection operator. Applying $\dot{\Delta}_{j}\mathbb{P}$ to $(\ref{c57})$, it  gives 
	\begin{equation}\label{c58}
	\begin{aligned}
	\partial_{t}\dot{\Delta}_{j}u+\dot{\Delta}_{j}\mathbb{P}(u\cdot\nabla u)-\dot{\Delta}_{j}\mathbb{P}\{\operatorname{div}(2(1+\dot{S}_{k}b)\mathbb{D}u)\}
	=\dot{\Delta}_{j}\mathbb{P}(E_{k}-\dot{S}_{k}a\nabla\pi-\nabla\dot{S}_{k}\lambda~\mathbb{D}u).
	\end{aligned}
	\end{equation}
	Multiplying the above equation by $\dot{\Delta}_{j}u$ and then integrating the resulting equations on $x\in\mathbb{R}^{3}$, it leads to
	\begin{equation}\label{c59}
	\begin{aligned}
	\frac{d}{dt}\|\dot{\Delta}_{j}u\|_{L^{2}}+2^{2j}\|\dot{\Delta}_{j}u\|_{L^{2}}\lesssim&\|[u\cdot\nabla;\dot{\Delta}_{j}\mathbb{P}]u\|_{L^{2}}+2^{j}\|[\dot{S}_{k}b;\dot{\Delta}_{j}]\mathbb{D} u\|_{L^{2}} \\
	&+\|\dot{\Delta}_{j}E_{k}\|_{L^{2}}+\|\dot{\Delta}_{j}(\dot{S}_{k}a\nabla\pi)\|_{L^{2}}+\|\dot{\Delta}_{j}(\nabla\dot{S}_{k}\lambda~\mathbb{D}u)\|_{L^{2}}.
	\end{aligned}
	\end{equation}
	\par In what follows, we shall deal with the right-hand side of~(\ref{c59}).~Firstly applying homogeneous Bony's decomposition yields 
	$$[u\cdot\nabla;\dot{\Delta}_{j}\mathbb{P}]u=[T_{u}\cdot\nabla;\dot{\Delta}_{j}\mathbb{P}]u+T^{'}_{\nabla\dot{\Delta}_{j} u}u-\dot{\Delta}_{j}\mathbb{P}(T_{\nabla u}u)-\dot{\Delta}_{j}\mathbb{P}\operatorname{div}\mathcal{R}(u,u).$$
	It follows again from the above estimate,~which implies that
	$$
	\begin{aligned}
	\|[T_{u}\cdot\nabla;\dot{\Delta}_{j}\mathbb{P}]u\|_{L^{2}}
	&\lesssim\sum_{|j-k|\leq 4}2^{-j}\|\nabla\dot{S}_{k-1}u\|_{L^{\infty}}\|\dot{\Delta}_{k}\nabla u\|_{L^{2}}\\
	& \lesssim\sum_{|j-k|\leq 4} 2^{k-j} \|\nabla u\|_{L^{\infty}}\|\dot{\Delta}_{k}u\|_{L^{2}}\\
	& \lesssim 2^{2j\delta}\sum_{|j-k|\leq 4} 2^{(k-j)(1+2\delta)} \|\nabla u\|_{L^{\infty}}2^{-2k\delta}\|\dot{\Delta}_{k} u\|_{L^{2}}\\
	& \lesssim c_{j}2^{2j\delta}\|\nabla u\|_{L^{\infty}}\|u\|_{\dot {H}^{-2\delta}}.
	\end{aligned}
	$$
	The same estimate holds for $T^{'}_{\nabla\dot{\Delta}_{j} u}u.$ Note that
	$$
	\begin{aligned}
	\|\dot{\Delta}_{j}\mathbb{P}(T_{\nabla u} u)\|_{L^{2}}&\lesssim\sum_{|k-j|\leq 4}\|\dot{S}_{k-1}\nabla u\|_{L^{2}}\|\dot{\Delta}_{k}u\|_{L^{\infty}}\\&
	\lesssim 2^{2j\delta}\sum_{|k-j|\leq 4}2^{2(k-j)\delta}2^{-2k\delta}\|\dot{S}_{k-1} u\|_{L^{2}}\|\dot{\Delta}_{k}\nabla u\|_{L^{\infty}}\\&
	\lesssim c_{j}2^{2j\delta}\|u\|_{\dot {H}^{-2\delta}}\|\nabla u\|_{L^{\infty}}.
	\end{aligned}
	$$
	For $\delta<\frac{3}{4},$ we have
	$$
	\begin{aligned}
	\|\dot{\Delta}_{j}\mathbb{P}\operatorname{div}\mathcal{R}(u,u)\|_{L^{2}}&\lesssim2^{\frac{3j}{2}}\|\dot{\Delta}_{j}\mathbb{P}\mathcal{R}(u,u)\|_{L^{\frac{3}{2}}}\\&
	\lesssim2^{\frac{3j}{2}}\sum_{k\geq j-3}\|\dot{\Delta}_{k} u\|_{L^{2}}\|\dot{\Delta}_{k} u\|_{L^{6}}\\&
	\lesssim 2^{2j\delta}\sum_{k\geq j-3}2^{(\frac{3}{2}-2\delta)(j-k)} 2^{-2k\delta}\|\dot{\Delta}_{k} u\|_{L^{2}}2^{\frac{k}{2}}\|\dot{\Delta}_{k}\nabla u\|_{L^{6}}\\&
	\lesssim c_{j}2^{2j\delta}\|u\|_{\dot {H}^{-2\delta}}\| \nabla u\|_{\dot{B}^{\frac{1}{2}}_{6,1}}.
	\end{aligned}
	$$
	From above, we obtain
	\begin{equation}\label{c60}
	\|[u\cdot\nabla;\dot{\Delta}_{j}\mathbb{P}]u\|_{L^{2}}\lesssim c_{j}2^{2j\delta}\|\nabla u\|_{\dot{B}^{\frac{3}{p}}_{p,1}}\|u\|_{\dot {H}^{-2\delta}}.
	\end{equation}
	As for the estimate of $[\dot{S}_{k}b;\dot{\Delta}_{j}]\mathbb{D}u,$ we get, by virtue of Lemma \ref{Lemma-2.3}, that
	\begin{equation}\label{c61}
	\|[\dot{S}_{k}b;\dot{\Delta}_{j}]\mathbb{D}u\|_{L^{2}}\lesssim c_{j}2^{j(2\delta-1)}\|\nabla\dot{S}_{k}b\|_{\dot{B}^{\frac{3}{q}}_{q,\infty}\cap L^{\infty}}\|u\|_{\dot{H}^{1-2\delta}}.
	\end{equation}
	Similarly, for $\delta<\frac{3}{4}$, we obtain that
	\begin{equation}\label{c62}
	\begin{aligned}
	\|\dot{\Delta}_{j} E_{k}\|_{L^{2}}&\lesssim c_{j}2^{2j\delta} \|(a-\dot{S}_{k}a,~b-\dot{S}_{k}b,~\lambda-\dot{S}_{k}\lambda)\|_{\dot{B}^{\frac{3}{q}}_{q,1}}\left\{\|u\|_{\dot{H}^{2(1-\delta)}}+\|\nabla\pi\|_{\dot{H}^{-2\delta}}\right\}.
	\end{aligned}
	\end{equation}
	Along the same line, one has
	\begin{equation}\label{c63}
	\begin{aligned}
	\|\dot{\Delta}_{j}(\dot{S}_{k}a\nabla\pi) \|_{L^{2}}
	\lesssim&\|\dot{\Delta}_{j}(T_{\nabla\pi}\dot{S}_{k}a) \|_{L^{2}}+\|\dot{\Delta}_{j}(T_{\dot{S}_{k}a}\nabla\pi) \|_{L^{2}}+\|\dot{\Delta}_{j}\mathcal{R}(\nabla\pi,\dot{S}_{k}a) \|_{L^{2}} \\
	\lesssim&c_j 2^{2j\delta}\|\dot{S}_{k}a\|_{\dot{B}^{-2\delta}_{\infty,2}}\|\nabla\pi\|_{L^{2}}+2^{\frac{3}{2}j}\|\dot{\Delta}_{j}\mathcal{R}(\nabla\pi,\dot{S}_{k}a) \|_{L^{1}}\\
	\lesssim&c_j 2^{2j\delta}\|\dot{S}_{k}a\|_{\dot{H}^{\frac{3}{2}-2\delta}}\|\nabla\pi\|_{L^{2}}.
	\end{aligned}
	\end{equation}
	Similarly,
	\begin{equation}\label{c663}
	\begin{aligned}
	\|\dot{\Delta}_{j}(\nabla\dot{S}_{k}\lambda~\mathbb{D}u)\|_{L^{2}}
	\lesssim&\|\dot{\Delta}_{j}(T_{\mathbb{D}u}\nabla\dot{S}_{k}\lambda) \|_{L^{2}}+\|\dot{\Delta}_{j}(T_{\nabla\dot{S}_{k}\lambda}\mathbb{D}u)\|_{L^{2}}+\|\dot{\Delta}_{j}\mathcal{R}(\mathbb{D}u,\nabla\dot{S}_{k}\lambda) \|_{L^{2}}\\
	\lesssim& c_j 2^{2j\delta}\|\nabla u\|_{\dot{H}^{-2\delta}}\|\nabla\dot{S}_{k}\lambda\|_{L^{\infty}}+2^{\frac{3}{2}j}\|\dot{\Delta}_{j}\mathcal{R}(\mathbb{D}u,\nabla\dot{S}_{k}\lambda) \|_{L^{1}}\\
	\lesssim&  c_j 2^{2j\delta}\|\nabla\dot{S}_{k}\lambda\|_{\dot{B}^{\frac{3}{q}}_{q,\infty}\cap L^{\infty}}\|u\|_{\dot{H}^{1-2\delta}}.
	\end{aligned}
	\end{equation}
	According to Lemma \ref{lemma-2.1} and Lemma \ref{Lemma-2.2},~we may get
	\begin{equation}\label{c664}
	\begin{aligned}
	&\|(\nabla\dot{S}_{k}b,~\nabla\dot{S}_{k}\lambda)\|_{\dot{B}^{\frac{3}{q}}_{q,\infty}\cap L^{\infty}}\lesssim 2^{(1+\frac{3}{q})k}\|(\dot{S}_{k}b,~\dot{S}_{k}\lambda)\|_{L^{q}}.
	\end{aligned}
	\end{equation}
	\par Substituting (\ref{c60})-(\ref{c663}) into (\ref{c59}) and using the interpolation inequality
	$\|u\|_{\dot{H}^{1-2\delta}}\lesssim\|u\|^{\frac{1}{2}}_{\dot{H}^{-2\delta}}$ $\|u\|^{\frac{1}{2}}_{\dot{H}^{2(1-\delta)}}$ and Young's inequality, we obtain
	\begin{equation}\label{c64}
	\begin{aligned}
	&\|u\|_{\widetilde{L}^{\infty}_{t}(\dot{H}^{-2\delta})}+\|u\|_{\widetilde{L}^{1}_{t}(\dot{H}^{2(1-\delta)})}\\
	\lesssim& \|u_{0}\|_{\dot{H}^{-2\delta}}+\int^{t}_{0}\bigg(\|\nabla u\|_{\dot{B}^{\frac{3}{p}}_{p,1}}+2^{(2+\frac{6}{q})k}\|(\dot{S}_{k}b,\dot{S}_{k}\lambda)\|^{2}_{\widetilde{L}^{\infty}_{t}(L^{q})}\bigg)\|u\|_{\widetilde{L}^{\infty}_{t}(\dot{H}^{-2\delta})}d\tau\\
	&+\|(a-\dot{S}_{k}a,b-\dot{S}_{k}b,\lambda-\dot{S}_k\lambda)\|_{\widetilde{L}^{\infty}_{t}(\dot{B}^{\frac{3}{q}}_{q,1})}\bigg\{\|u\|_{\widetilde{L}^{1}_{t}(\dot{H}^{2(1-\delta)})}+\|\nabla\pi\|_{\widetilde{L}^{1}_{t}(\dot{H}^{-2\delta})}\bigg\}\\&+2^{(\frac{3}{2}-2\delta)k}\|\dot{S}_{k}a\|_{L^{\infty}_{t}(L^{2})}\|\nabla\pi\|_{L^{1}_{t}(L^{2})}.
	\end{aligned}
	\end{equation}
	\vskip0.5cm
	Secondly, we notice that the right-hand terms $\|\nabla\pi\|_{\widetilde{L}^{1}_{t}(\dot{H}^{-2\delta})}$ and $\|\nabla\pi\|_{L^{1}_{t}(L^{2})}$ of the inequality (\ref{c64}) are both unknown, we next estimate that $\|\nabla\pi\|_{\widetilde{L}^{1}_{t}(\dot{H}^{-2\delta})}.$
	
	In order to estimate the pressure function $\pi,$ we get from $(\ref{a2})_{2}$ that
	\begin{equation}\label{c65}
	\begin{aligned}
	\operatorname{div}((1+\dot{S}_{k}a)\nabla \pi)=&-\operatorname{div}(u\cdot\nabla u)+\operatorname{div}\operatorname{div}(2\dot{S}_{k}b\mathbb{D}u)+\operatorname{div}E_{k}-\operatorname{div}(\nabla\dot{S}_{k}\lambda~\mathbb{D}u),
	\end{aligned}
	\end{equation}
	which implies
	$$
	\begin{aligned}
	\operatorname{div}((1+\dot{S}_k a) \nabla \dot{\Delta}_j \pi)=&-\operatorname{div} \dot{\Delta}_j(u \cdot \nabla u)+\operatorname{div} \dot{\Delta}_j E_{k}\\
	&+\operatorname{div} \dot{\Delta}_j(2\nabla \dot{S}_k b ~\mathbb{D}u+\dot{S}_k b \Delta u)-\operatorname{div}\dot{\Delta}_j (\nabla\dot{S}_{k}\lambda~\mathbb{D}u)\\&-\operatorname{div}[\dot{\Delta}_j, \dot{S}_k a] \nabla \pi.
	\end{aligned}
	$$
	Taking $L^{2}$ inner product of the above equation with $\dot{\Delta}_j \pi$ and using the Definition \ref{Definition-2}, we find
	\begin{equation}\label{c66}
	\begin{aligned}
	\|\nabla\pi\|_{\widetilde{L}^{1}_{t}(\dot{H}^{-2\delta})}\lesssim&	\|\operatorname{div}(u\cdot\nabla u)\|_{\widetilde{L}^{1}_{t}(\dot{H}^{-1-2\delta})}+\|E_{k}\|_{\widetilde{L}^{1}_{t}(\dot{H}^{-2\delta})}\\&+\|\nabla \dot{S}_k b ~\mathbb{D}u\|_{\widetilde{L}^{1}_{t}(\dot{H}^{-2\delta})}+\|\Delta u\cdot\nabla\dot{S}_k b\|_{\widetilde{L}^{1}_{t}(\dot{H}^{-1-2\delta})}\\&+\|\nabla\dot{S}_{k}\lambda~\mathbb{D}u\|_{\widetilde{L}^{1}_{t}(\dot{H}^{-2\delta})}+\|(2^{-2j\delta }\|[\dot{S}_k a ;\dot{\Delta}_j] \nabla \pi \|_{L^1_{t}(L^2)})_{j\in\mathbb{Z}}\|_{l^2}.
	\end{aligned}
	\end{equation}
	We now estimate term by term in (\ref{c66}). Applying Lemma \ref{Lemma-2.2} , it yields
	\begin{equation}\label{cc20}
	\|\nabla u\cdot\nabla u\|_{\widetilde{L}^{1}_{t}(\dot{H}^{-1-2\delta})}\lesssim\int^{t}_{0}\|\nabla u\|_{\dot{B}^{\frac{3}{p}}_{p,1}}\|u\|_{\widetilde{L}^{\infty}_{t}(\dot{H}^{-2\delta})}d\tau,
	\end{equation}
	and for $\delta\in(\frac{1}{2},\frac{3}{4}),$ one has
	\begin{equation}
	\begin{aligned}
	\|E_{k}\|_{\widetilde{L}^{1}_{t}(\dot{H}^{-2\delta})}\lesssim& \|(a-\dot{S}_{k}a,~b-\dot{S}_{k}b,~\lambda-\dot{S}_{k}\lambda)\|_{\widetilde{L}^{\infty}_{t}(\dot{B}^{\frac{3}{q}}_{q,1})}\bigg\{\|u\|_{\widetilde{L}^{1}_{t}(\dot{H}^{2(1-\delta)})}+\|\nabla\pi\|_{\widetilde{L}^{1}_{t}(\dot{H}^{-2\delta})}\bigg\}.
	\end{aligned}
	\end{equation}
	Similarly,
	\begin{equation}
	\begin{aligned}
	&\|\nabla \dot{S}_k b~\mathbb{D}u\|_{\widetilde{L}^{1}_{t}(\dot{H}^{-2\delta})}
	\lesssim \|\nabla\dot{S}_{k}b\|_{{L}^{2}_{t}(\dot{B}^{\frac{3}{q}}_{q,\infty}\cap L^{\infty})}\|u\|_{L^{2}_{t}(\dot {H}^{1-2\delta})},
	\end{aligned}
	\end{equation}
	and
	\begin{equation}\label{c67}
	\|\Delta u\cdot\nabla\dot{S}_k b\|_{\widetilde{L}^{1}_{t}(\dot{H}^{-1-2\delta})}\lesssim \|\nabla\dot{S}_{k}b\|_{L^{2}_{t}(\dot{B}^{\frac{3}{q}}_{q,\infty}\cap L^{\infty})}\|u\|_{L^{2}_{t}(\dot {H}^{1-2\delta})}.
	\end{equation}
	The same estimate holds for $\nabla\dot{S}_{k}\lambda~\mathbb{D}u,$ i.e.,
	\begin{equation}
	\|\nabla\dot{S}_{k}\lambda~\mathbb{D}u\|_{\widetilde{L}^{1}_{t}(\dot{H}^{-2\delta})}\lesssim \|\nabla\dot{S}_{k}\lambda\|_{L^{2}_{t}(\dot{B}^{\frac{3}{q}}_{q,\infty}\cap L^{\infty})}\|u\|_{L^{2}_{t}(\dot {H}^{1-2\delta})}.
	\end{equation}
	Whereas applying Bony’s decomposition once again, it leads to
	$$[\dot{S}_k a ;\dot{\Delta}_j] \nabla \pi=[T_{\dot{S}_k a};\dot{\Delta}_j]\nabla \pi+T^{'}_{\dot{\Delta}_j \nabla \pi}\dot{S}_k a-\dot{\Delta}_j T_{\nabla \pi} \dot{S}_k a-\dot{\Delta}_j\mathcal{R}( \dot{S}_k a,\nabla \pi).$$
	Applying Lemma \ref{Lemma-2.2}, we yield
	$$
	\begin{aligned}
	\|[T_{\dot{S}_k a};\dot{\Delta}_j]\nabla \pi\|_{L^{2}}
	&\lesssim\sum_{|j-l|\leq 4}2^{-j}\|\nabla\dot{S}_{l-1}\dot{S}_{k}a\|_{L^{\infty}}\|\dot{\Delta}_{l}\nabla \pi\|_{L^{2}}\\
	&\lesssim c_{j}2^{2j\delta}\|\dot{S}_{k}a\|_{\dot{H}^{\frac{3}{2}-2\delta}}\|\nabla \pi\|_{L^{2}}.
	\end{aligned}
	$$
	Similarly, we have
	$$
	\begin{aligned}
	\|T^{'}_{\dot{\Delta}_j \nabla \pi}\dot{S}_k a\|_{L^{2}}&\lesssim\sum_{l\geq j-2}\|\dot{S}_{l+2}\dot{\Delta}_{j}\nabla \pi\|_{L^{\infty}}\|\dot{\Delta}_{l}\dot{S}_{k}a\|_{L^{2}}\\
	&\lesssim2^{2j\delta}\|\dot{\Delta}_{j}\nabla \pi\|_{L^{2}} \sum_{l\geq j-2}2^{(j-l)(\frac{3}{2}-2\delta)}2^{l(\frac{3}{2}-2\delta)}\|\dot{\Delta}_{l}\dot{S}_{k}a\|_{L^{2}}\\
	&\lesssim c_{j}2^{2j\delta}\|\nabla \pi\|_{L^{2}}\|\dot{S}_{k}a\|_{\dot{H}^{\frac{3}{2}-2\delta}}.
	\end{aligned}
	$$
	The same estimate holds for~$\dot{\Delta}_j T_{\nabla \pi} \dot{S}_k a$. Notice that
	$$
	\begin{aligned}
	\|\dot{\Delta}_j\mathcal{R}( \dot{S}_k a,\nabla \pi)\|_{L^{2}}&\lesssim2^{\frac{3}{2}j}\|\dot{\Delta}_j\mathcal{R}( \dot{S}_k a,\nabla \pi)\|_{L^{1}}\\
	&\lesssim2^{2j\delta}\sum_{l\geq j-3}2^{(j-l)(\frac{3}{2}-2\delta)}\|\dot{\Delta}_{l}\nabla \pi\|_{L^{2}}2^{l(\frac{3}{2}-2\delta)}\|\dot{\Delta}_{l}\dot{S}_{k}a\|_{L^{2}}\\
	&\lesssim c_{j}2^{2j\delta}\|\nabla \pi\|_{L^{2}}\|\dot{S}_{k}a\|_{\dot{H}^{\frac{3}{2}-2\delta}}.
	\end{aligned}
	$$
	Therefore, we can deduce that 
	\begin{equation}\label{c68}
	\|[\dot{S}_k a ;\dot{\Delta}_j] \nabla \pi\|_{L^{1}_{t}(L^{2})}\lesssim c_{j}2^{2j\delta}\|\dot{S}_{k}a\|_{\widetilde{L}^{\infty}_{t}(\dot{H}^{\frac{3}{2}-2\delta})}\|\nabla \pi\|_{L^{1}_{t}(L^{2})}.
	\end{equation}
	\par Substituting (\ref{cc20})-(\ref{c68}) into (\ref{c66}) and using the interpolation inequality
	$\|u\|_{\dot{H}^{1-2\delta}}\lesssim\|u\|^{\frac{1}{2}}_{\dot{H}^{-2\delta}}\|u\|^{\frac{1}{2}}_{\dot{H}^{2(1-\delta)}}$ and Young's inequality, we write
	\begin{equation}\label{c69}
	\begin{aligned}
	\|\nabla\pi\|_{\widetilde{L}^{1}_{t}(\dot{H}^{-2\delta})}\leq&C\int^{t}_{0}\bigg(\|u\|_{\dot{B}^{1+\frac{3}{p}}_{p,1}}+2^{(2+\frac{6}{q})k}\|(\dot{S}_{k}b,\dot{S}_{k}\lambda)\|^{2}_{L^{q}}\bigg)\|u\|_{\widetilde{L}^{\infty}_{t}(\dot {H}^{-2\delta})}d\tau\\
	&+C\|(a-\dot{S}_{k}a,b-\dot{S}_{k}b,\lambda-\dot{S}_k \lambda)\|_{\widetilde{L}^{\infty}_{t}(\dot{B}^{\frac{3}{q}}_{q,1})}\bigg\{\|u\|_{\widetilde{L}^{1}_{t}(\dot{H}^{2(1-\delta)})}+\|\nabla\pi\|_{\widetilde{L}^{1}_{t}(\dot{H}^{-2\delta})}\bigg\}\\
	&+C\|\dot{S}_{k}a\|_{\widetilde{L}^{\infty}_{t}(\dot{H}^{\frac{3}{2}-2\delta})}\|\nabla \pi\|_{L^{1}_{t}(L^{2})}+\frac{1}{8}\|u\|_{\widetilde{L}^{1}_{t}(\dot{H}^{2-2\delta})},
	\end{aligned}
	\end{equation}
	which along with (\ref{c64}) ensures that
	\begin{equation}
	\begin{aligned}
	&\|u\|_{\widetilde{L}^{\infty}_{t}(\dot{H}^{-2\delta})}+\|u\|_{\widetilde{L}^{1}_{t}(\dot{H}^{2(1-\delta)})}+\|\nabla\pi\|_{\widetilde{L}^{1}_{t}(\dot{H}^{-2\delta})}\\
	\leq&C\|u_{0}\|_{\dot{H}^{-2\delta}}+C\int^{t}_{0}\bigg(\|u\|_{\dot{B}^{1+\frac{3}{p}}_{p,1}}+2^{(2+\frac{6}{q})k}\|(\dot{S}_{k}b,\dot{S}_{k}\lambda)\|^{2}_{L^{q}}\bigg)\|u\|_{\widetilde{L}^{\infty}_{t}(\dot{H}^{-2\delta})}d\tau\\
	&+C\|(a-\dot{S}_{k}a,b-\dot{S}_{k}b,\lambda-\dot{S}_k \lambda)\|_{\widetilde{L}^{\infty}_{t}(\dot{B}^{\frac{3}{q}}_{q,1})}\bigg\{\|u\|_{\widetilde{L}^{1}_{t}(\dot{H}^{2(1-\delta)})}+\|\nabla\pi\|_{\widetilde{L}^{1}_{t}(\dot{H}^{-2\delta})}\bigg\}\\
	&+C\|\dot{S}_{k}a\|_{\widetilde{L}^{\infty}_{t}(\dot{H}^{\frac{3}{2}-2\delta})}\|\nabla \pi\|_{L^{1}_{t}(L^{2})}.
	\end{aligned}
	\end{equation}
    Then we have used $a,$ $b,$ $\lambda\in \widetilde{L}^{\infty}_{T}(B^{\frac{3}{q}}_{q,1})$ and chosen $k$ sufficiently large so that
	\begin{equation}
	C\|(a-\dot{S}_{k}a,b-\dot{S}_{k}b,\lambda-\dot{S}_k \lambda)\|_{\widetilde{L}^{\infty}_{t}(\dot{B}^{\frac{3}{q}}_{q,1})}\leq\frac{1}{8},
	\end{equation}
	from which, we duduce that
	\begin{equation}\label{c70}
	\begin{aligned}
	&\|u\|_{\widetilde{L}^{\infty}_{t}(\dot{H}^{-2\delta})}+\|u\|_{\widetilde{L}^{1}_{t}(\dot{H}^{2(1-\delta)})}+\|\nabla\pi\|_{\widetilde{L}^{1}_{t}(\dot{H}^{-2\delta})}\\
	\leq&C\|u_{0}\|_{\dot{H}^{-2\delta}}+C\int^{t}_{0}\bigg(\|u\|_{\dot{B}^{1+\frac{3}{p}}_{p,1}}+2^{(2+\frac{6}{q})k}\|(\dot{S}_{k}b,\dot{S}_{k}\lambda)\|^{2}_{L^{q}}\bigg)\|u\|_{\widetilde{L}^{\infty}_{t}(\dot{H}^{-2\delta})}d\tau\\
	&+C\|\dot{S}_{k}a\|_{\widetilde{L}^{\infty}_{t}(\dot{H}^{\frac{3}{2}-2\delta})}\|\nabla \pi\|_{L^{1}_{t}(L^{2})}.
	\end{aligned}
	\end{equation}
	\vskip 0.5cm
     \par  Finally, we remark that the only unknown term in the right-hand side of the equation (\ref{c70}) is the pressure term $\|\nabla\pi\|_{L^{1}_{t}(L^{2})}.$ In the following, we will estimating $\|\nabla\pi\|_{L^{1}_{t}(L^{2})}$ and break down the proof of (\ref{c56}) with $p\in]1,9/2]$ into the following cases:
	\vskip 0.5cm
	\noindent\textbf{Case 1} The estimate of (\ref{c56}) with $p\in ]1,2[$.
	
	\par We first get by taking div to $(\ref{a2})_{2}$ that
	\begin{equation}
	\begin{aligned}\label{bb28}
	\operatorname{div}\left\{\left(1+a\right)\nabla\pi\right\}=&-\operatorname{div}\left(u\cdot\nabla u\right)+\operatorname{div}\operatorname{div} \left(2b\mathbb{D}u\right)
	-\operatorname{div}\left(\nabla\lambda~\mathbb{D}u\right),
	\end{aligned}
	\end{equation}
	from which and $0<\underline{a}\leq 1+a$, we deduce by a standard energy estimate that
	\begin{equation}\label{bb29}
	\begin{aligned}
	\|\nabla\pi\|_{L^{1}_{t}(L^{2})}\lesssim\|u\cdot\nabla u\|_{L^{1}_{t}(L^{2})}+&\|\operatorname{div} (2b\mathbb{D}u)\|_{L^{1}_{t}(L^2)}+\|\nabla\lambda~\mathbb{D}u\|_{L^{1}_{t}(L^2)}.
	\end{aligned}
	\end{equation}
	For the case of $1<p<2,$ we have $\dot{B}^{\frac{3}{p}-\frac{3}{2}}_{p,2}\hookrightarrow L^2,$ it is enough for estimate every term on the
	right hand side of (\ref{bb29}) in the space of $\dot{B}^{\frac{3}{p}-\frac{3}{2}}_{p,2}$. As a result, it comes out
	$$
	\begin{aligned}
	\|u\cdot\nabla u\|_{\dot{B}^{\frac{3}{p}-\frac{3}{2}}_{p,2}}\lesssim&\|T_{u}\nabla u\|_{\dot{B}^{\frac{3}{p}-\frac{3}{2}}_{p,2}}+\|T_{\nabla u} u\|_{\dot{B}^{\frac{3}{p}-\frac{3}{2}}_{p,2}}+\|\operatorname{div}\mathcal{R}(u,u)\|_{\dot{B}^{\frac{3}{p}-\frac{3}{2}}_{p,2}}\\
	\lesssim& \|u\|_{\dot{B}^{\frac{3}{p}}_{p,1}}\|\nabla u\|_{\dot{B}^{-\frac{3}{2}}_{\infty,1}}+\|\mathcal{R}(u,u)\|_{\dot{B}^{\frac{3}{p}-\frac{1}{2}}_{p,2}}\\
	\lesssim & \|u\|_{\dot{B}^{\frac{3}{p}}_{p,1}}\|u\|_{\dot{B}^{\frac{3}{p}-\frac{1}{2}}_{p,1}},
	\end{aligned}
	$$
	then we get, by applying interpolation inequalities, that
	\begin{equation}\label{bb31}
	\|u\cdot\nabla u\|_{\dot{B}^{\frac{3}{p}-\frac{3}{2}}_{p,2}}\lesssim \|u\|^{\frac{5}{4}}_{\dot{B}^{-1+\frac{3}{p}}_{p,1}}\|u\|^{\frac{3}{4}}_{\dot{B}^{1+\frac{3}{p}}_{p,1}}.
	\end{equation}
	It follows from Lemma \ref{Lemma-2.2} and $q\leq p$ that
	\begin{equation}
	\begin{aligned}
	\|\operatorname{div} (2b\mathbb{D}u)\|_{\dot{B}^{\frac{3}{p}-\frac{3}{2}}_{p,2}}\lesssim\|b\mathbb{D}u\|_{\dot{B}^{\frac{3}{p}-\frac{1}{2}}_{p,2}}&\lesssim\|T_{b}\mathbb{D}u\|_{\dot{B}^{\frac{3}{p}-\frac{1}{2}}_{p,2}}+\|T_{\mathbb{D}u} b\|_{\dot{B}^{\frac{3}{p}-\frac{1}{2}}_{p,2}}+\|\mathcal{R}(b,\mathbb{D}u)\|_{\dot{B}^{\frac{3}{p}-\frac{1}{2}}_{p,2}}\\
	&\lesssim \|b\|_{L^\infty}\|\nabla u\|_{\dot{B}^{\frac{3}{p}-\frac{1}{2}}_{p,1}}+\|\nabla u\|_{\dot{B}^{-\frac{1}{2}}_{\infty,1}}\|b\|_{\dot{B}^{\frac{3}{p}}_{p,1}}\\
	&\lesssim \|b\|_{\dot{B}^{\frac{3}{q}}_{q,1}}\|u\|_{\dot{B}^{\frac{3}{p}+\frac{1}{2}}_{p,1}}.
	\end{aligned}
	\end{equation}
	The same estimate holds for~$\nabla\lambda~\mathbb{D}u$. Notice that
	\begin{equation}\label{bb32}
	\begin{aligned}
	\|u\|_{\dot{B}^{\frac{3}{p}+\frac{1}{2}}_{p,1}}\lesssim \|u\|^{\frac{1}{4}}_{\dot{B}^{-1+\frac{3}{p}}_{p,1}}\|u\|^{\frac{3}{4}}_{\dot{B}^{1+\frac{3}{p}}_{p,1}}.
	\end{aligned}
	\end{equation}	
	Inserting the above estimates into (\ref{bb29}) and using (\ref{bb32}) gives rise to
	\begin{equation}\label{bb30}
	\begin{aligned}
	\|\nabla\pi\|_{L^{1}_{t}(L^{2})}\lesssim&\int^{t}_{0}\|u\|^{\frac{5}{4}}_{\dot{B}^{-1+\frac{3}{p}}_{p,1}}\|u\|^{\frac{3}{4}}_{\dot{B}^{1+\frac{3}{p}}_{p,1}}d\tau\\&+\|(b,\lambda)\|_{L^{\infty}_{t}(\dot{B}_{q,1}^{\frac{3}{q}})}\int^{t}_{0}\|u\|_{\dot{B}_{p, 1}^{-1+\frac{3}{p}}}^{\frac{1}{4}}\|u\|_{\dot{B}_{p, 1}^{1+\frac{3}{p}}}^{\frac{3}{4}} d\tau,
	\end{aligned}
	\end{equation}
		which combining (\ref{c70}), for some integer $k$, we achieve
	\begin{equation}\label{cc35}
	\begin{aligned}
	&\|u\|_{\widetilde{L}^{\infty}_{t}(\dot{H}^{-2\delta})}+\|u\|_{\widetilde{L}^{1}_{t}(\dot{H}^{2(1-\delta)})}+\|\nabla\pi\|_{\widetilde{L}^{1}_{t}(\dot{H}^{-2\delta})}\\
	\lesssim& \|u_{0}\|_{\dot{H}^{-2\delta}}+\int^{t}_{0}\bigg(\|u\|_{\dot{B}^{1+\frac{3}{p}}_{p,1}}+2^{(2+\frac{6}{q})k}\|(b,\lambda)\|^{2}_{L^{q}}\bigg)\|u\|_{\widetilde{L}^{\infty}_{t}(\dot{H}^{-2\delta})}d\tau\\&+2^{(\frac{3}{q}-2\delta)k}\|a\|_{L^{\infty}_{t}(L^{q})}\int^{t}_{0}\|u\|^{\frac{5}{4}}_{\dot{B}^{-1+\frac{3}{p}}_{p,1}}\|u\|^{\frac{3}{4}}_{\dot{B}^{1+\frac{3}{p}}_{p,1}}d\tau\\&+2^{(\frac{3}{q}-2\delta)k}\|a\|_{L^{\infty}_{t}(L^{q})}\|(b,\lambda)\|_{L^{\infty}_{t}(\dot{B}_{q,1}^{\frac{3}{q}})}\int^{t}_{0}\|u\|_{\dot{B}_{p, 1}^{-1+\frac{3}{p}}}^{\frac{1}{4}}\|u\|_{\dot{B}_{p, 1}^{1+\frac{3}{p}}}^{\frac{3}{4}} d\tau.
	\end{aligned}
	\end{equation}
	Applying Gronwall's lemma to (\ref{cc35}), we complete the inequality of  (\ref{c56}).
	\vskip 0.5cm

	\noindent\textbf{Case 2.} The estimate of (\ref{c56}) with $p\in[2,4]$. 
	
	\par We can easily deduce from (\ref{bb28}) that
	\begin{equation}\label{b25}
	\begin{aligned}
	\|\nabla\pi\|_{L^{1}_{t}(L^{2})}\lesssim&\|u\cdot\nabla u\|_{L^{1}_{t}(L^{2})}+\|\Delta u\cdot\nabla b\|_{L^{1}_{t}(\dot{H}^{-1})}\\&+\|\operatorname{div}(\nabla b~\mathbb{D}u)\|_{L^{1}_{t}(\dot{H}^{-1})}+\|\operatorname{div}\left(\nabla\lambda~\mathbb{D}u\right)\|_{L^{1}_{t}(\dot{H}^{-1})}.
	\end{aligned}
	\end{equation}
	Then it follows from interpolation inequalities in Besov spaces and for $p\in[3,4]$ that
	\begin{equation}\label{b26}
	\begin{aligned}
	\|u\cdot\nabla u\|_{L^{2}}\lesssim &\|u\|_{L^{p}}\|\nabla u\|_{L^{\frac{2p}{p-2}}}\lesssim \|u\|_{L^p}\|u\|_{\dot{B}^{\frac{6}{p}-\frac{1}{2}}_{p,1}}
	\lesssim\|u\|^{\frac{5}{4}}_{\dot{B}^{-1+\frac{3}{p}}_{p,1}}\|u\|^{\frac{3}{4}}_{\dot{B}^{1+\frac{3}{p}}_{p,1}},
	\end{aligned}
	\end{equation}
	if $p\in[2,3],$ we have
	\begin{equation}\label{bb26}
	\begin{aligned}
	\|u\cdot\nabla u\|_{L^{2}}\lesssim &\|u\|_{L^{2p}}\|\nabla u\|_{L^{\frac{4p}{2p-2}}}\lesssim \|u\|_{\dot{B}^{\frac{3}{2p}}_{p,1}}\|u\|_{\dot{B}^{-\frac{1}{2}+\frac{9}{2p}}_{p,1}}
	\lesssim\|u\|^{\frac{5}{4}}_{\dot{B}^{-1+\frac{3}{p}}_{p,1}}\|u\|^{\frac{3}{4}}_{\dot{B}^{1+\frac{3}{p}}_{p,1}}.
	\end{aligned}
	\end{equation}
	Similarly, we can deduce 
	\begin{equation}\label{b27}
	\begin{aligned}
	\|\Delta u\cdot\nabla b\|_{\dot{H}^{-1}}\lesssim&\|T_{\nabla b}\Delta u\|_{\dot{H}^{-1}}+\|T_{\Delta u}\nabla b\|_{\dot{H}^{-1}}+\|\operatorname{div}\mathcal{R}(b,\Delta u)\|_{\dot{H}^{-1}}\\
	\lesssim&\|\nabla b\|_{L^{3}}\|\Delta u\|_{\dot{B}_{6, 2}^{-1}}+\|\mathcal{R}(b,\Delta u)\|_{\dot{B}^{\frac{1}{2}}_{\frac{3}{2},2}} \\
	\lesssim&\|b\|_{\dot{B}_{q,1}^{\frac{3}{q}}}\|u\|_{\dot{B}_{p, 1}^{-1+\frac{3}{p}}}^{\frac{1}{4}}\|u\|_{\dot{B}_{p, 1}^{1+\frac{3}{p}}}^{\frac{3}{4}},
	\end{aligned}
	\end{equation}	
	and
	\begin{equation}\label{b28}
	\begin{aligned}
	\|\operatorname{div}(\nabla b~\mathbb{D}u)\|_{\dot{H}^{-1}}\lesssim\|\nabla u\|_{L^{6}}\|\nabla b\|_{L^{3}} \lesssim\|b\|_{\dot{B}_{q,1}^{\frac{3}{q}}}\|u\|_{\dot{B}_{p, 1}^{-1+\frac{3}{p}}}^{\frac{1}{4}}\|u\|_{\dot{B}_{p, 1}^{1+\frac{3}{p}}}^{\frac{3}{4}}.
	\end{aligned}
	\end{equation}	
	Along the same way, we obtain
	\begin{equation}\label{b29}
	\begin{aligned}
	\|\operatorname{div}\left(\nabla\lambda~\mathbb{D}u\right)\|_{\dot{H}^{-1}}
	\lesssim \|\lambda\|_{\dot{B}_{q,1}^{\frac{3}{q}}}\|u\|_{\dot{B}_{p, 1}^{-1+\frac{3}{p}}}^{\frac{1}{4}}\|u\|_{\dot{B}_{p, 1}^{1+\frac{3}{p}}}^{\frac{3}{4}}.
	\end{aligned}
	\end{equation}
	\par Substituting (\ref{b26}), (\ref{b27}), (\ref{b28}) and (\ref{b29}) into (\ref{b25}) and the fact that
	\begin{equation}\label{b30}
	\begin{aligned}
	\|\nabla\pi\|_{L^{1}_{t}(L^{2})}\lesssim&\int^{t}_{0}\|u\|^{\frac{5}{4}}_{\dot{B}^{-1+\frac{3}{p}}_{p,1}}\|u\|^{\frac{3}{4}}_{\dot{B}^{1+\frac{3}{p}}_{p,1}}d\tau\\&+\|(b,\lambda)\|_{L^{\infty}_{t}(\dot{B}_{q,1}^{\frac{3}{q}})}\int^{t}_{0}\|u\|_{\dot{B}_{p, 1}^{-1+\frac{3}{p}}}^{\frac{1}{4}}\|u\|_{\dot{B}_{p, 1}^{1+\frac{3}{p}}}^{\frac{3}{4}} d\tau,
	\end{aligned}
	\end{equation}
	which combining (\ref{c70}), we achieve
	\begin{equation}\label{cc43}
	\begin{aligned}
	&\|u\|_{\widetilde{L}^{\infty}_{t}(\dot{H}^{-2\delta})}+\|u\|_{\widetilde{L}^{1}_{t}(\dot{H}^{2(1-\delta)})}+\|\nabla\pi\|_{\widetilde{L}^{1}_{t}(\dot{H}^{-2\delta})}\\
	\lesssim& \|u_{0}\|_{\dot{H}^{-2\delta}}+\int^{t}_{0}\bigg(\|u\|_{\dot{B}^{1+\frac{3}{p}}_{p,1}}+2^{(2+\frac{6}{q})k}\|(b,\lambda)\|^{2}_{L^{q}}\bigg)\|u\|_{\widetilde{L}^{\infty}_{t}(\dot{H}^{-2\delta})}d\tau\\&+2^{(\frac{3}{q}-2\delta)k}\|a\|_{L^{\infty}_{t}(L^{q})}\int^{t}_{0}\|u\|^{\frac{5}{4}}_{\dot{B}^{-1+\frac{3}{p}}_{p,1}}\|u\|^{\frac{3}{4}}_{\dot{B}^{1+\frac{3}{p}}_{p,1}}d\tau\\&+2^{(\frac{3}{q}-2\delta)k}\|a\|_{L^{\infty}_{t}(L^{q})}\|(b,\lambda)\|_{L^{\infty}_{t}(\dot{B}_{q,1}^{\frac{3}{q}})}\int^{t}_{0}\|u\|_{\dot{B}_{p, 1}^{-1+\frac{3}{p}}}^{\frac{1}{4}}\|u\|_{\dot{B}_{p, 1}^{1+\frac{3}{p}}}^{\frac{3}{4}} d\tau.
	\end{aligned}
	\end{equation}
	Applying Gronwall's lemma to (\ref{cc43}), it implies (\ref{c56}).
	\vskip 0.5cm
	\noindent\textbf{Case 3.} The estimate of (\ref{c56}) with $p\in ]4,9/2]$. 
	
    \par In order to relax the condition on $p$ in (\ref{c56}) to $]1,9/2]$, we need to re-estimate (\ref{c63}) and (\ref{c68}) with the help of the following Lemma:
	
	\begin{Lemma}\label{pi2}
		Let $1\leq q\leq2<\frac{p}{2}\leq\infty$ with $1-\frac{2}{p}\leq\frac{1}{q}$ and $\delta\in]1/2,3/4[,$ one has
		\begin{equation}\label{cc37}
		\|\dot{\Delta}_j (\dot{S}_{k}a\nabla\pi)\|_{L^{2}}
		\lesssim c_{j}2^{2j\delta}\|\dot{S}_{k}a\|_{\dot{B}^{\frac{3}{q}-2\delta}_{q,\infty}}\|\nabla\pi\|_{\dot{B}^{\frac{6}{p}-\frac{3}{2}}_{\frac{p}{2},2}},
		\end{equation}
		and
		\begin{equation}\label{cc38}
		\|[\dot{S}_k a ;\dot{\Delta}_j] \nabla \pi\|_{L^{2}}\lesssim c_{j}2^{2j\delta}\|\dot{S}_{k}a\|_{\dot{B}^{\frac{3}{q}-2\delta}_{q,\infty}}\|\nabla \pi\|_{\dot{B}^{\frac{6}{p}-\frac{3}{2}}_{\frac{p}{2},2}}.
		\end{equation}
	\end{Lemma}
	\begin{proof}
		By applying Bony's decomposition and a standard process, we write
		$$
		\begin{aligned}
		\|\dot{S}_{k}a\nabla\pi\|_{\dot{H}^{-2\delta}}
		\lesssim&\|T_{\nabla\pi}\dot{S}_{k}a \|_{\dot{H}^{-2\delta}}+\|T_{\dot{S}_{k}a}\nabla\pi \|_{\dot{H}^{-2\delta}}+\|\mathcal{R}(\nabla\pi,\dot{S}_{k}a) \|_{\dot{H}^{-2\delta}} \\
		\lesssim&\|\dot{S}_{k}a\|_{\dot{B}^{\frac{3}{2}-\frac{6}{p}-2\delta}_{\frac{2p}{p-4},\infty}}\|\nabla\pi\|_{\dot{B}^{\frac{6}{p}-\frac{3}{2}}_{\frac{p}{2},2}}+\|\mathcal{R}(\nabla\pi,\dot{S}_{k}a) \|_{\dot{B}^{\frac{3}{2}-2\delta}_{1,2}}\\
		\lesssim&\|\dot{S}_{k}a\|_{\dot{B}^{\frac{3}{q}-2\delta}_{q,\infty}}\|\nabla\pi\|_{\dot{B}^{\frac{6}{p}-\frac{3}{2}}_{\frac{p}{2},2}}.
		\end{aligned}
		$$
		The same estimate holds for $[\dot{S}_k a ;\dot{\Delta}_j] \nabla \pi$, we omitted here.
	\end{proof}
	
	\par Inserting the estimates of Lemma \ref{pi2} into (\ref{c59}) and (\ref{c66}) and using a similar derivation of (\ref{c70}), we deduce that
	\begin{equation}\label{b39}
	\begin{aligned}
	&\|u\|_{\widetilde{L}^{\infty}_{t}(\dot{H}^{-2\delta})}+\|u\|_{\widetilde{L}^{1}_{t}(\dot{H}^{2(1-\delta)})}+\|\nabla\pi\|_{\widetilde{L}^{1}_{t}(\dot{H}^{-2\delta})}\\
	\lesssim&\|u_{0}\|_{\dot{H}^{-2\delta}}+\int^{t}_{0}\bigg(\|u\|_{\dot{B}^{1+\frac{3}{p}}_{p,1}}+2^{(2+\frac{6}{q})k}\|(\dot{S}_{k}b,\dot{S}_{k}\lambda)\|^{2}_{L^{q}}\bigg)\|u\|_{\widetilde{L}^{\infty}_{t}(\dot{H}^{-2\delta})}d\tau\\
	&+\|\dot{S}_{k}a\|_{\widetilde{L}^{\infty}_{t}(\dot{B}^{\frac{3}{q}-2\delta}_{q,\infty})}\|\nabla\pi\|_{\widetilde{L}^{1}_t(\dot{B}^{\frac{6}{p}-\frac{3}{2}}_{\frac{p}{2},2})}.
	\end{aligned}
	\end{equation}
	As in the previous steps, taking div to $(\ref{a2})_{2}$ that
	$$
	\begin{aligned}
	\operatorname{div}\left\{\left(1+a\right)\nabla\pi\right\}=&-\operatorname{div}\left(u\cdot\nabla u\right)+\operatorname{div}\operatorname{div} \left(2b\mathbb{D}u\right)
	-\operatorname{div}\left(\nabla\lambda~\mathbb{D}u\right).
	\end{aligned}
	$$
   From which and Lemma \ref{pi}, one has
   \begin{equation}\label{b41}
   \begin{aligned}
   \|\nabla\pi\|_{\widetilde{L}^{1}_t(\dot{B}^{\frac{6}{p}-\frac{3}{2}}_{\frac{p}{2},2})}\lesssim&(1+\|a\|_{\widetilde{L}^{\infty}_{t}(\dot{B}_{q,1}^{\frac{3}{q}})})\bigg\{\|u\cdot\nabla u\|_{\widetilde{L}^{1}_t(\dot{B}^{\frac{6}{p}-\frac{3}{2}}_{\frac{p}{2},2})}+\|\Delta u\cdot\nabla b\|_{\widetilde{L}^{1}_{t}(\dot{H}^{-1})}\\&+\|\operatorname{div}(\nabla b~\mathbb{D}u)\|_{\widetilde{L}^{1}_{t}(\dot{H}^{-1})}+\|\operatorname{div}\left(\nabla\lambda~\mathbb{D}u\right)\|_{\widetilde{L}^{1}_{t}(\dot{H}^{-1})}\bigg\},
   \end{aligned}
   \end{equation}
   where we use the embedding inequality $\dot{H}^{-1}\hookrightarrow\dot{B}^{\frac{6}{p}-\frac{5}{2}}_{\frac{p}{2},2}$ for $p>4$ in the inequality. We remark that the advantage of (\ref{b41}) is that the pressure term does not appear on the right-hand side of (\ref{b41}). Applying Lemma \ref{lemma-2.1} and Lemma \ref{Lemma-2.2} yields for $p\in]4,\frac{9}{2}],$
   $$
   \begin{aligned}
   \|u\cdot\nabla u\|_{\dot{B}^{\frac{6}{p}-\frac{3}{2}}_{\frac{p}{2},2}}&\lesssim \|T_{u}\nabla u\|_{\dot{B}^{\frac{6}{p}-\frac{3}{2}}_{\frac{p}{2},2}}+\|T_{\nabla u} u\|_{\dot{B}^{\frac{6}{p}-\frac{3}{2}}_{\frac{p}{2},2}}+\|\operatorname{div}\mathbb{R}(u,u)\|_{\dot{B}^{\frac{6}{p}-\frac{3}{2}}_{\frac{p}{2},2}}\\
   &\lesssim \|u\|_{\dot{B}^{\frac{6}{p}-\frac{3}{2}}_{p,2}}\|\nabla u\|_{\dot{B}^{0}_{p,\infty}}+\|\nabla u\|_{\dot{B}^{\frac{6}{p}-\frac{5}{2}}_{p,2}}\|u\|_{\dot{B}^{1}_{p,\infty}}+\|\mathbb{R}(u,u)\|_{\dot{B}^{\frac{6}{p}-\frac{1}{2}}_{\frac{p}{2},2}}\\
   &\lesssim \|u\|_{\dot{B}^{\frac{6}{p}-\frac{3}{2}}_{p,2}}\|u\|_{\dot{B}^{1}_{p,\infty}}.
   \end{aligned}
   $$
   Then it follows from interpolation inequalities in Besov spaces that
   \begin{equation}\label{b42}
   \|u\cdot\nabla u\|_{\dot{B}^{\frac{6}{p}-\frac{3}{2}}_{\frac{p}{2},2}}\lesssim\|u\|_{\dot{B}^{\frac{6}{p}-\frac{3}{2}}_{p,2}}\|u\|_{\dot{B}^{1}_{p,\infty}}\lesssim\|u\|^{\frac{5}{4}}_{\dot{B}^{-1+\frac{3}{p}}_{p,1}}\|u\|^{\frac{3}{4}}_{\dot{B}^{1+\frac{3}{p}}_{p,1}}.
   \end{equation}
   For the remaining terms have been given by (\ref{b27}) (\ref{b28}) and (\ref{b29}) respectively. 
   
   By inserting the above estimates into (\ref{b41}), we get
   \begin{equation}\label{cc42}
   \begin{aligned}
   \|\nabla\pi\|_{\widetilde{L}^{1}_{t}(\dot{B}^{\frac{6}{p}-\frac{3}{2}}_{\frac{p}{2},2})}\lesssim&(1+\|a\|_{\widetilde{L}^{\infty}_{t}(\dot{B}_{q,1}^{\frac{3}{q}})})\Big\{\int^{t}_{0}\|u\|^{\frac{5}{4}}_{\dot{B}^{-1+\frac{3}{p}}_{p,1}}\|u\|^{\frac{3}{4}}_{\dot{B}^{1+\frac{3}{p}}_{p,1}}d\tau\\&+\|(b,\lambda)\|_{\widetilde{L}^{\infty}_{t}(\dot{B}_{q,1}^{\frac{3}{q}})}\int^{t}_{0}\|u\|_{\dot{B}_{p, 1}^{-1+\frac{3}{p}}}^{\frac{1}{4}}\|u\|_{\dot{B}_{p, 1}^{1+\frac{3}{p}}}^{\frac{3}{4}} d\tau\Big\},
   \end{aligned}
   \end{equation}
which along with (\ref{b39}) ensures that 
\begin{equation}\label{ddd1}
\begin{aligned}
&\|u\|_{\widetilde{L}^{\infty}_{t}(\dot{H}^{-2\delta})}+\|u\|_{\widetilde{L}^{1}_{t}(\dot{H}^{2(1-\delta)})}+\|\nabla\pi\|_{\widetilde{L}^{1}_{t}(\dot{H}^{-2\delta})}\\
\lesssim&\|u_{0}\|_{\dot{H}^{-2\delta}}+\int^{t}_{0}\bigg(\|u\|_{\dot{B}^{1+\frac{3}{p}}_{p,1}}+C2^{(2+\frac{6}{q})k}\|(\dot{S}_{k}b,\dot{S}_{k}\lambda)\|^{2}_{L^{q}}\bigg)\|u\|_{\widetilde{L}^{\infty}_{t}(\dot{H}^{-2\delta})}d\tau\\
&+C2^{(\frac{3}{q}-2\delta)k}\|a\|_{\widetilde{L}^{\infty}_{t}(L^q)}(1+\|a\|_{\widetilde{L}^{\infty}_{t}(\dot{B}_{q,1}^{\frac{3}{q}})})\bigg\{\int^{t}_{0}\|u\|^{\frac{5}{4}}_{\dot{B}^{-1+\frac{3}{p}}_{p,1}}\|u\|^{\frac{3}{4}}_{\dot{B}^{1+\frac{3}{p}}_{p,1}}d\tau\\&+\|(b,\lambda)\|_{L^{\infty}_{t}(\dot{B}_{q,1}^{\frac{3}{q}})}\int^{t}_{0}\|u\|_{\dot{B}_{p, 1}^{-1+\frac{3}{p}}}^{\frac{1}{4}}\|u\|_{\dot{B}_{p, 1}^{1+\frac{3}{p}}}^{\frac{3}{4}} d\tau\bigg\}.
\end{aligned}
\end{equation}
Finally, applying Gronwall's lemma to \eqref{ddd1}, it implies the desired inequality (\ref{c56}).
\end{proof}

 With Lemma \ref{Lemma-2}, we can prove the propagation of regularities of $u_0\in \dot{H}^{-2\delta}$ for $u$ on $t\in]0,1[$, even higher regularity estimates can be obtained with the following results:

\begin{Proposition}\label{Corollary-3.1}
 Under the assumptions of Theorem \ref{Theorem-1}, there exists a time $T\geq1,$ such that we can find some $t_1\in]0,T[$ and there holds
	\begin{equation}\label{c772}
	\begin{aligned}
	&\|u\|_{\widetilde{L}^{\infty}([0,T];\dot{H}^{-2\delta})}+\|u\|_{\widetilde{L}^{1}([0,T];\dot{H}^{2(1-\delta)})}\leq C\{1+\|u_0\|_{\dot{H}^{-2\delta}}\},
	\end{aligned}
	\end{equation}
	\begin{equation}\label{c72}
	\begin{aligned}
	&\|u\|_{L^{\infty}([t_1,T];L^2)}+\|u\|_{\widetilde{L}^{1}([t_1,T];\dot{H}^{2})}\leq C\{1+\|u_0\|_{\dot{H}^{-2\delta}}/t_1^{\delta}\},
	\end{aligned}
	\end{equation}
	where $C$ depends only on $t$, $\underline{\mu}$, $\overline{\mu}$, $p$, $q$, $\|\rho_0-1\|_{{B}_{q,1}^{\frac{3}{q}}}$ and $\|u_0\|_{\dot{B}^{-1+\frac{3}{p}}_{p,1}}.$
\end{Proposition}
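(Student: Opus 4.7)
The plan is to combine the local-existence result of Lemma \ref{Lemma-2} with the negative-Sobolev propagation estimate of Proposition \ref{Proposition-4.3}, and then to extract $L^{2}$-regularity at a carefully chosen intermediate time $t_{1}$ via interpolation before propagating it by the standard energy identity.

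First I would apply Lemma \ref{Lemma-2} to the equivalent system (\ref{a2}), which is legitimate because (\ref{aa12}) is entailed by the assumption (\ref{a7}) upon choosing $\varepsilon\leq\varepsilon_{0}$. This gives a local strong solution $(\rho,u,\nabla\pi)$ on some time interval $[0,T^{*}]$ with $T^{*}\geq 1$ together with
$$
\|u\|_{L^{\infty}_{T^{*}}(\dot{B}^{-1+\frac{3}{p}}_{p,1})}+\|u\|_{L^{1}_{T^{*}}(\dot{B}^{1+\frac{3}{p}}_{p,1})}\leq C\|u_{0}\|_{\dot{B}^{-1+\frac{3}{p}}_{p,1}}\leq C\varepsilon.
$$
Applying Lemma \ref{Lemma-2.4} to the transport equation $\partial_{t}a+u\cdot\nabla a=0$ and to the analogous equations satisfied by $b$ and $\lambda$, the $B^{3/q}_{q,1}$-regularity of $(a,b,\lambda)$ is propagated uniformly on $[0,T^{*}]$ by a quantity depending only on the initial data and $\|u\|_{L^{1}_{T^{*}}(\dot{B}^{1+3/p}_{p,1})}$. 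Setting $T=T^{*}$, all the quantities that enter the right-hand side of (\ref{c56}) are therefore controlled by the initial data.

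Second, I would plug these bounds into Proposition \ref{Proposition-4.3} for some sufficiently large integer $k$. The exponential factor in (\ref{c56}) is bounded by a constant depending on $T$, $\|\rho_{0}-1\|_{B^{3/q}_{q,1}}$, $\|\nabla\mu(\rho_{0})\|_{L^{r}}$ and $\|u_{0}\|_{\dot{B}^{-1+3/p}_{p,1}}$, while the bracketed prefactor reduces to $\|u_{0}\|_{\dot{H}^{-2\delta}}+O(\varepsilon)$ thanks to the smallness of $\|u\|_{L^{\infty}_{t}(\dot{B}^{-1+3/p}_{p,1})}^{1/4}\|u\|_{L^{1}_{t}(\dot{B}^{1+3/p}_{p,1})}^{3/4}$. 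This immediately produces (\ref{c772}).

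Third, for (\ref{c72}) I would use the $L^{1}$-in-time control of $\|u\|_{\dot{H}^{2(1-\delta)}}$ just obtained. A Chebyshev/mean-value argument on any window $(0,t_{1})$ produces a point (which I relabel $t_{1}$) where
$$
\|u(t_{1})\|_{\dot{H}^{2(1-\delta)}}\lesssim\frac{1}{t_{1}}\bigl(1+\|u_{0}\|_{\dot{H}^{-2\delta}}\bigr).
$$
By the interpolation identity $[\dot{H}^{-2\delta},\dot{H}^{2(1-\delta)}]_{\delta}=L^{2}$ (valid since $\delta\in(1/2,3/4)$),
$$
\|u(t_{1})\|_{L^{2}}\leq\|u(t_{1})\|_{\dot{H}^{-2\delta}}^{1-\delta}\,\|u(t_{1})\|_{\dot{H}^{2(1-\delta)}}^{\delta}\lesssim \frac{1}{t_{1}^{\delta}}\bigl(1+\|u_{0}\|_{\dot{H}^{-2\delta}}\bigr).
$$
The classical energy identity $\tfrac{1}{2}\tfrac{d}{dt}\!\int\rho|u|^{2}\,dx+2\!\int\mu(\rho)|\mathbb{D}u|^{2}\,dx=0$ then propagates this bound to $[t_{1},T]$, yielding the $L^{\infty}([t_{1},T];L^{2})$ estimate together with a $L^{2}([t_{1},T];\dot{H}^{1})$ control of $u$ via Korn's inequality and the lower bound $\mu(\rho)\geq\underline{\mu}$. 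Finally, the $\widetilde{L}^{1}([t_{1},T];\dot{H}^{2})$ bound follows from Corollary \ref{Corollary-5.1}, which reduces the problem to estimating $\|\sqrt{\rho}\,u_{t}\|_{L^{2}([t_{1},T];L^{2})}$ and absorbing the cubic term $\|\nabla u\|_{L^{2}}^{3}$ using the viscous dissipation; $\sqrt{\rho}u_{t}$ is controlled by testing the momentum equation against $u_{t}$ and integrating in time, using the already-established bound on $\nabla u$ and the uniform control of $\|\nabla\mu(\rho)\|_{L^{r}}$ inherited from the transport equation $\partial_{t}\mu(\rho)+u\cdot\nabla\mu(\rho)=0$ via $\nabla u\in L^{1}([0,T];L^{\infty})$. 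The main obstacle is precisely closing this last quasilinear loop without any smallness assumption on the density, but on a bounded time interval it is achieved by Gronwall applied to the resulting coupled inequality.
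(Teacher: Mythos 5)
Your derivation of \eqref{c772} follows the paper's route: apply Lemma~\ref{Lemma-2} on $[0,T^*]$, propagate $(a,b,\lambda)$ in $B^{3/q}_{q,1}$ by the transport lemma, and plug into Proposition~\ref{Proposition-4.3}. That part is fine.

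For \eqref{c72} your strategy diverges genuinely from the paper's. The paper re-runs the Littlewood--Paley parabolic estimate of Proposition~\ref{Proposition-4.3} at regularity shifted from $(\dot H^{-2\delta},\dot H^{2(1-\delta)})$ to $(L^2,\dot H^2)$, starting from a time $\tau$ at which $u(\tau)\in L^2$; for $p\in\,]4,9/2]$ it needs the extra commutator $[\pi;\dot\Delta_j\mathbb P]\nabla a$ and the Besov pressure estimate of Lemma~\ref{pi}. You instead propose the classical energy chain: the identity tested against $u$, then against $u_t$, then Corollary~\ref{Corollary-5.1} for $\dot H^2$. This is more elementary, and it would sidestep the $p>4$ case analysis entirely since the $\dot W^{2,2}$ Stokes estimate is $p$-independent. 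However, as written the argument has a genuine gap: testing the momentum equation against $u_t$ produces a boundary term $\|\nabla u(t_1)\|_{L^2}^2$, and you never establish $\nabla u(t_1)\in L^2$. Your mean-value/interpolation step only yields $u(t_1)\in L^2\cap\dot H^{2(1-\delta)}$, and since $\delta\in\,]1/2,3/4[$ we have $2(1-\delta)<1$, so this does \emph{not} control $\nabla u(t_1)$ in $L^2$. Likewise, when you later write that the cubic term $\|\nabla u\|_{L^2}^3$ is absorbed ``using the already-established bound on $\nabla u$'', the $L^\infty_t L^2$ bound on $\nabla u$ is precisely what the $u_t$-test is supposed to produce, so the reasoning is circular as stated. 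A standard fix exists (a second mean-value step on the $L^2_t\dot H^1$ dissipation bound to select a time where $\nabla u\in L^2$, relabelled as $t_1$; or a time weight $(t-t_1)$ in the $u_t$-energy identity as in Lemma~\ref{Lemma-5.4}), but you need to say it. A smaller point: \eqref{c772} gives $\widetilde L^1_t\dot H^{2(1-\delta)}$, not $L^1_t\dot H^{2(1-\delta)}$, and since $\widetilde L^1_t\dot B^s_{2,2}$ is the weaker norm the Chebyshev step on $\|u(t)\|_{\dot H^{2(1-\delta)}}$ is not immediate; you should instead produce the bound $\|u\|_{L^1([0,t_1];L^2)}\lesssim t_1^{1-\delta}\{1+\|u_0\|_{\dot H^{-2\delta}}\}$ by a dyadic low/high frequency split of the Chemin--Lerner bounds and then apply Chebyshev to $\|u(t)\|_{L^2}$ directly, which is also what the paper's chain of inequalities implicitly does.
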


\begin{proof}
To obtain that (\ref{c772}) holds, we need to resort to the result of the Lemma \ref{Lemma-2} and check that it satisfies the conditions of Proposition \ref{Proposition-4.3}, i.e., $b,$ $\lambda\in \widetilde{L}^{\infty}_{T}(B^{\frac{3}{q}}_{q,1}).$ By virtue of the definition of $b(a)$ and $\lambda(a)$ from (\ref{a3}), we have $b(0)=\lambda(0)=0.$ Thus, resorting to the Paralinearization theorem of \cite{2011BCD}, we can easily obtain
	\begin{align}
	\|(a,b,\lambda)\|_{\widetilde{L}^{\infty}_{1}(B^{\frac{3}{q}}_{q,1})}\lesssim\|a\|_{\widetilde{L}^{\infty}_{1}(B^{\frac{3}{q}}_{q,1})}\lesssim\|a_0\|_{B^{\frac{3}{q}}_{q,1}}\lesssim \|\rho_0-1\|_{B^{\frac{3}{q}}_{q,1}},
	\end{align}
	from which we can conclude that combining Proposition \ref{Proposition-4.3} gives (\ref{c772}).
	
	\par Next, in a similar way to the process of Proposition \ref{Proposition-4.3}, we make a higher order estimate of $u$ in the following two steps: 
	\vskip 0.5cm
	\noindent\textbf{Step 1.} The estimate of (\ref{c72}) with $p\in ]1,4]$.
	
	 By a similar proof of case 1 and case 2 in Proposition \ref{Proposition-4.3}, we can find some $0<\tau<t_1<T$ such that $u(\tau)\in L^2$ and 
	\begin{equation}\label{b45}
	\begin{aligned}
	&\|u\|_{L^{\infty}([\tau,T];L^2)}+\|u\|_{\widetilde{L}^{1}([\tau,T];\dot{H}^{2})}+\|\nabla\pi\|_{L^{1}([\tau,T];L^2)}\\
	\lesssim&\{1+\|u(\tau)\|_{L^{2}}\}
	\lesssim\{1+\|u\|_{L^{1}([0,t_1];L^{2})}/t_1\}
	\lesssim\{1+\|u_0\|_{\dot{H}^{-2\delta}}/t_1^{\delta}\}.
	\end{aligned}
	\end{equation}
	
	\noindent\textbf{Step 2.} The estimate of (\ref{c72}) with $p\in]4,\frac{9}{2}].$

	We rewrite the $u$ equation of $(\ref{a2})_2$ as 
	\begin{equation}\label{c73}
	\begin{aligned}
	&\partial_{t}u+u\cdot\nabla u-\operatorname{div}(2(1+\dot{S}_{k}b)\mathbb{D}u)+\nabla((1+a)\pi)=E_{k}+\pi\nabla a-\nabla\dot{S}_{k}\lambda~\mathbb{D}u
	\end{aligned}
	\end{equation}
	with
	$$\begin{aligned}
	E_{k}=&\operatorname{div}(2(b-\dot{S}_{k}b)\mathbb{D}u)-\nabla(\lambda-\dot{S}_{k}\lambda)\mathbb{D}u.
	\end{aligned}$$
    Applying $\dot{\Delta}_{j}\mathbb{P}$ to $(\ref{c73}),$
	multiplying the equation by $\dot{\Delta}_{j}u$ and then integrating the resulting equations on $x\in\mathbb{R}^{3}$, we deduce
	\begin{equation}\label{c74}
	\begin{aligned}
	\frac{d}{dt}\|\dot{\Delta}_{j}u\|_{L^{2}}+2^{2j}\|\dot{\Delta}_{j}u\|_{L^{2}}&\lesssim\|[u\cdot\nabla;\dot{\Delta}_{j}\mathbb{P}]u\|_{L^{2}}+2^{j}\|[\dot{S}_{k}b;\dot{\Delta}_{j}]\mathbb{D} u\|_{L^{2}} \\
	&+\|\dot{\Delta}_{j}\mathbb{P}E_{k}\|_{L^{2}}+\|[\pi;\dot{\Delta}_{j}\mathbb{P}]\nabla a\|_{L^2}+\|\dot{\Delta}_{j}(\nabla\dot{S}_{k}\lambda~\mathbb{D}u)\|_{L^{2}}.
	\end{aligned}
	\end{equation}
	\par In what follows, we shall deal with the right-hand side of (\ref{c74}). First, by virtue of Lemma \ref{Lemma-2.2} and Lemma \ref{Lemma-2.3}, we get
	\begin{equation}\label{c75}
	\|[u\cdot\nabla;\dot{\Delta}_{j}\mathbb{P}]u\|_{L^{1}_{t}(L^{2})}\lesssim c_{j}\int^{t}_{0}\|\nabla u\|_{\dot{B}^{\frac{3}{p}}_{p,1}}\|u\|_{L^\infty_{t}(L^2)}d\tau,
	\end{equation}
	\begin{equation}\label{c76}
	\|[\dot{S}_{k}b;\dot{\Delta}_{j}]\mathbb{D}u\|_{L^{1}_{t}(L^{2})}\lesssim c_{j}2^{-j}\|\nabla\dot{S}_{k}b\|_{L^{2}_{t}(\dot{B}^{\frac{3}{q}}_{q,\infty}\cap L^{\infty})}\|u\|_{L^{2}_{t}(\dot{H}^{1})},
	\end{equation}
	\begin{equation}\label{c77}
	\begin{aligned}
	\|\dot{\Delta}_{j} E_{k}\|_{L^{1}_{t}(L^{2})}\lesssim c_{j} \|(b-\dot{S}_{k}b,~\lambda-\dot{S}_{k}\lambda)\|_{\widetilde{L}^{\infty}_{t}(\dot{B}^{\frac{3}{q}}_{q,1})}\|u\|_{\widetilde{L}^{1}_{t}(\dot{H}^{2})},
	\end{aligned}
	\end{equation}
	and
	\begin{equation}\label{c78}
	\begin{aligned}
	\|\dot{\Delta}_{j}(\nabla\dot{S}_{k}\lambda~\mathbb{D}u)\|_{L^{1}_{t}(L^{2})}
	\lesssim&  c_j \|\nabla\dot{S}_{k}\lambda\|_{L^{2}_{t}(\dot{B}^{\frac{3}{q}}_{q,\infty}\cap L^{\infty})}\|u\|_{L^{2}_{t}(\dot{H}^{1})}.
	\end{aligned}
	\end{equation}
	About the estimate of $[\pi;\dot{\Delta}_{j}\mathbb{P}]\nabla a$, we get, by applying Bony’s decomposition, that
	$$[\pi ;\dot{\Delta}_j\mathbb{P}] \nabla a=[T_{\pi};\dot{\Delta}_j\mathbb{P}]\nabla a-\dot{\Delta}_j\mathbb{P}(T_{\nabla a} \pi) -\dot{\Delta}_j\mathbb{P}\mathcal{R}(\nabla a,\pi).$$
	Applying Lemma \ref{Lemma-2.2}, we yield
	$$
	\begin{aligned}
	\|[T_{\pi};\dot{\Delta}_j\mathbb{P}]\nabla a\|_{L^{2}}
	&\lesssim\sum_{|j-k|\leq 4}2^{-j}\|\dot{S}_{k-1}\nabla \pi\|_{L^{\frac{p}{2}}}\|\dot{\Delta}_{k}\nabla a\|_{L^{\frac{2p}{p-4}}}\\
	&\lesssim\sum_{|j-k|\leq 4}2^{k-j}2^{(\frac{6}{p}-\frac{3}{2})k}\|\dot{S}_{k-1}\nabla \pi\|_{L^{\frac{p}{2}}}2^{(\frac{1}{2}-\frac{6}{p})k}\|\dot{\Delta}_{k}\nabla a\|_{L^{\frac{2p}{p-4}}}\\
	&\lesssim c_{j}\|\nabla\pi\|_{\dot{B}^{\frac{6}{p}-\frac{3}{2}}_{\frac{p}{2},2}}\|a\|_{\dot{B}^{\frac{3}{q}}_{q,1}},
	\end{aligned}
	$$
	where we use the embedding inequality $\dot{B}^{\frac{3}{q}}_{q,1}\hookrightarrow\dot{B}^{\frac{3}{2}-\frac{6}{p}}_{\frac{2p}{p-4},1}$ for $p>4$. Similarly, we have
	$$
	\begin{aligned}
	\|\dot{\Delta}_j\mathbb{P}(T_{\nabla a} \pi)\|_{L^{2}}
	&\lesssim\sum_{|k-j|\leq 4}2^{(\frac{1}{2}-\frac{6}{p})k}\|\dot{S}_{k+2}\nabla a\|_{L^{\frac{2p}{p-4}}}2^{(\frac{6}{p}-\frac{1}{2})k}\|\dot{\Delta}_{k}\pi\|_{L^{\frac{p}{2}}}\\
	&\lesssim c_{j}\|a\|_{\dot{B}^{\frac{3}{q}}_{q,1}}\|\nabla\pi\|_{\dot{B}^{\frac{6}{p}-\frac{3}{2}}_{\frac{p}{2},2}}.
	\end{aligned}
	$$
	Using the same techniques to $\dot{\Delta}_j\mathcal{R}( \dot{S}_k a,\nabla \pi)$, we get for $1-\frac{2}{p}\leq\frac{1}{q}$ that
	$$
	\begin{aligned}
	\|\dot{\Delta}_j\mathbb{P}\mathcal{R}( \nabla a, \pi)\|_{L^{2}}&\lesssim2^{\frac{3}{2}j}\|\dot{\Delta}_j\mathcal{R}( \nabla a, \pi)\|_{L^{1}}\\
	&\lesssim\sum_{k\geq j-3}2^{\frac{3}{2}(j-k)}2^{(2-\frac{6}{p})k}\|\dot{\Delta}_{k}\nabla a\|_{L^{\frac{p}{p-2}}}2^{(\frac{6}{p}-\frac{1}{2})k}\|\dot{\Delta}_{k} \pi\|_{L^{\frac{p}{2}}}\\
	&\lesssim c_{j}\|a\|_{\dot{B}^{\frac{3}{q}}_{q,1}}\|\nabla\pi\|_{\dot{B}^{\frac{6}{p}-\frac{3}{2}}_{\frac{p}{2},2}}.
	\end{aligned}
	$$
	Therefore, we can deduce that 
	\begin{equation}\label{c79}
	\|[\pi ;\dot{\Delta}_j\mathbb{P}] \nabla a\|_{L^{1}_{t}(L^{2})}\lesssim c_{j}\|a\|_{\widetilde{L}^{\infty}_{t}(\dot{B}^{\frac{3}{q}}_{q,1})}\|\nabla \pi\|_{\widetilde{L}^{1}_{t}(\dot{B}^{\frac{6}{p}-\frac{3}{2}}_{\frac{p}{2},2})}.
	\end{equation}
	 Inserting (\ref{cc42}) into (\ref{c79}), we know
	\begin{equation}\label{c80}
	\begin{aligned}
	\|[\pi ;\dot{\Delta}_j\mathbb{P}] \nabla a\|_{L^{1}_{t}(L^{2})}&\lesssim c_{j}(1+\|a\|^2_{\widetilde{L}^{\infty}_{t}(\dot{B}_{q,1}^{\frac{3}{q}})})\Big\{\int^{t}_{0}\|u\|^{\frac{5}{4}}_{\dot{B}^{-1+\frac{3}{p}}_{p,1}}\|u\|^{\frac{3}{4}}_{\dot{B}^{1+\frac{3}{p}}_{p,1}}d\tau\\&+\|(b,\lambda)\|_{\widetilde{L}^{\infty}_{t}(\dot{B}_{q,1}^{\frac{3}{q}})}\int^{t}_{0}\|u\|_{\dot{B}_{p, 1}^{-1+\frac{3}{p}}}^{\frac{1}{4}}\|u\|_{\dot{B}_{p, 1}^{1+\frac{3}{p}}}^{\frac{3}{4}} d\tau\Big\}\lesssim c_{j}.
	\end{aligned}
	\end{equation}
	Substituting (\ref{c75}), (\ref{c76}), (\ref{c77}), (\ref{c78}) and (\ref{c80}) into (\ref{c74}) together with the  Gronwall's inequality and Proposition \ref{Proposition-4.3}, we can find some $0<\tau<t_1<T$ such that 
	\begin{equation}\label{c81}
	\begin{aligned}
	&\|u\|_{L^{\infty}([\tau,T];L^2)}+\|u\|_{\widetilde{L}^{1}([\tau,T];\dot{H}^{2})}+\|\nabla\pi\|_{\widetilde{L}^{1}([\tau,T];\dot{B}^{\frac{6}{p}-\frac{3}{2}}_{\frac{p}{2},2})}
	\lesssim\{1+\|u_0\|_{\dot{H}^{-2\delta}}/t_1^{\delta}\}.
	\end{aligned}
	\end{equation}
	This completes the proof of Proposition \ref{Corollary-3.1}.
\end{proof}
   \vskip 0.5cm
   From the results obtained in Proposition \ref{Corollary-3.1}, it is easy to obtain the following propagation of the regularities of $u_0\in \dot{H}^{-2\delta}$ for $u$ on $[0,1].$
   
   \begin{Remark}\label{c3.1}
   	Under the assumptions of Theorem \ref{Theorem-1}, there exists $t_0\in]0,1[$ such that $u(t_0)\in \dot{H}^{-2\delta}\cap H^2\cap \dot{B}_{p, 1}^{-1+\frac{3}{p}}\cap\dot{B}_{p, 1}^{1+\frac{3}{p}}\left(\mathbb{R}^3\right).$ Moreover, there holds 
   	\begin{equation}\label{cc62}
   	\begin{aligned}
   	\|u(t_{0})\|_{\dot{B}_{p, 1}^{-1+\frac{3}{p}}\cap\dot{B}_{p, 1}^{1+\frac{3}{p}}}\leq C\|u_{0}\|_{\dot{B}_{p, 1}^{-1+\frac{3}{p}}}\quad{\rm and}\quad\|u(t_0)\|_{\dot{H}^{-2\delta}\cap H^{2}}\leq C\{1+\|u_{0}\|_{\dot{H}^{-2\delta}}\},
   	\end{aligned}
   	\end{equation}
   	where $C$ depends only on $\underline{\mu}$, $\overline{\mu}$, $p$, $q$, $\|\rho_0-1\|_{{B}_{q,1}^{\frac{3}{q}}}$ and $\|u_0\|_{\dot{B}^{-1+\frac{3}{p}}_{p,1}}.$
   \end{Remark}

	\section{Global well-posedness of (\ref{a1})}
    The goal of this section is to prove the global well-posedness part of Theorem \ref{Theorem-1} provided that $\|u_{0}\|_{\dot{B}^{-1+\frac{3}{p}}_{p,1}}$ is sufficiently small. Showing that the \emph{a priori} $L^{1}([t_0,\infty);L^\infty(\mathbb{R}^{3}))$ estimate for $\nabla u$ is sufficiently small is a crucial part of proving Theorem \ref{Theorem-1}. The key ingredient here is to decompose the velocity fields $u$ into $v$ and $w$, where $v$ satisfies the 3D classical Navier-Stokes equations
	\begin{equation}\label{dd1}
	\left\{\begin{array}{l}
	\partial_t v+v \cdot \nabla v-\Delta v+\nabla \pi_v=0, \\
	\operatorname{div} v=0, \\
	\left. v\right|_{t=t_0}=u\left(t_0\right).
	\end{array}\right.
	\end{equation}
	The perturbation $w:=u-v$ satisfies
	\begin{equation}\label{dd2}
	\left\{\begin{array}{l}
	\partial_t \rho+\operatorname{div}(\rho(v+w))=0, \\
	\rho \partial_t w+\rho(v+w) \cdot \nabla w-\operatorname{div}(2\mu(\rho)\mathbb{D}w)+\nabla \pi_w\\
	=(1-\rho)\left(\partial_t v+v \cdot \nabla v\right)-\rho w \cdot \nabla v+\operatorname{div}(2(\mu(\rho)-1)\mathbb{D}v), \\
	\operatorname{div} w=0, \\
	\left.\rho\right|_{t=t_0}=\rho\left(t_0\right),\left.\quad w\right|_{t=t_0}=0.
	\end{array}\right.
	\end{equation}
    Since there is no smallness when making energy estimates for $u$ directly, there is naturally no way to get smallness for the $L^{1}([t_0,\infty);L^\infty(\mathbb{R}^{3}))$ estimate of $\nabla u$. So we use the above decomposition when making the $\dot{H}^1$ estimate of $u$. The advantage here is that while there is no smallness for the $L^{\infty}([t_0,\infty);H^1(\mathbb{R}^{3}))$ estimate of $u$, the $L^{\infty}([t_0,\infty);H^1(\mathbb{R}^{3}))$ estimate of $w$ is sufficiently small. More details can be found in Lemma \ref{Lemma-5.3}.
	\begin{Remark}
		We shall provide more information for this global solution $(\rho,u,\nabla\pi)$ in the process of proof to Theorem \ref{Theorem-1}. Indeed let $t_0$ be determined by (\ref{cc62}), we shall prove (\ref{a1}) has a unique global solution $(\rho,u,\nabla\pi)$ on $[t_0,+\infty[$ with $u=v+w$ and $v$ solving (\ref{dd1}), $w$ solving (\ref{dd2}). The detailed information of $v$ is presented in
		Proposition \ref{Proposition-5.1}, and that of $w$ is in Lemma \ref{Lemma-5.3}.
	\end{Remark}

	\noindent\textbf{Strategy of the proof of Theorem \ref{Theorem-1}:} Let $\rho_0,u_0,p,q,\delta$ be given by Theorem \ref{Theorem-1}. Thanks to Lemma \ref{Lemma-2} and Remark \ref{c3.1}, we conclude that system (\ref{a1}) has a unique local solution $(\rho, u)$ satisfying $\rho-1\in\mathcal{C}([0,T^{*});B_{q,1}^{\frac{3}{q}}(\mathbb{R}^{3}))$ and $u\in \mathcal{C}([0,T^{*});\dot{B}_{p, 1}^{-1+\frac{3}{p}}(\mathbb{R}^{3}))$ $\cap$ $ L^{1}([0,T^{*});$ $\dot{B}_{p, 1}^{1+\frac{3}{p}}(\mathbb{R}^{3}))$ for some $T^{*}\geq 1$, and we can find some $t_{0}\in (0,1)$ such that
	\begin{equation}\label{d1}
	\|u(t_{0})\|_{\dot{B}_{p, 1}^{-1+\frac{3}{p}}\cap\dot{B}_{p, 1}^{1+\frac{3}{p}}}\leq C\|u_{0}\|_{\dot{B}_{p, 1}^{-1+\frac{3}{p}}}\quad{\rm and}\quad\|u(t_0)\|_{\dot{H}^{-2\delta}\cap H^{2}}\leq C\{1+\|u_{0}\|_{\dot{H}^{-2\delta}}\}.
	\end{equation}
	Our aim of what follows is to prove that $T^{*}=+\infty.$

	\par Noticing from (\ref{d1}) that $u(t_{0})\in\dot{B}_{p, 1}^{-1+\frac{3}{p}}\cap\dot{B}_{p, 1}^{1+\frac{3}{p}}$ is very small provided that $\|u_{0}\|_{\dot{B}_{p, 1}^{-1+\frac{3}{p}}}$ is sufficiently small. The detailed information of $v$ is presented in the following, where we omitted the proof.
	\begin{Proposition}\label{Proposition-5.1} $(\cite{2013AGZ})$ 
		Let $(v,\pi_{v})$ be a unqiue global solution of $(\ref{dd1})$ which satisfies $(\ref{a7})$. Then for $s\in[-1+\frac{3}{p},1+\frac{3}{p}],$ there holds
		\begin{equation}\label{d3}
		\|v\|_{\widetilde{L}^{\infty}([t_0,\infty);\dot{B}^{s}_{p,1})}+\| (\Delta v,\nabla\pi_{v})\|_{L^1([t_0,\infty);\dot{B}^{s}_{p,1})}\leq C\|u(t_0)\|_{\dot{B}_{p,1}^{s}}\leq C\|u_0\|_{\dot{B}_{p,1}^{-1+\frac{3}{p}}},
		\end{equation}
		\begin{equation}\label{d4}
		\|\partial_t v\|_{\widetilde{L}^{\infty}([t_0,\infty);\dot{B}_{p, 1}^{-1+\frac{3}{p}})}+\|\partial_t v\|_{L^{1}([t_0,\infty);\dot{B}_{p,1}^{-1+\frac{3}{p}}\cap\dot{B}_{p,1}^{1+\frac{3}{p}})}\leq C\|u(t_0)\|_{\dot{B}_{p,1}^{s}}\leq C\|u_0\|_{\dot{B}_{p,1}^{-1+\frac{3}{p}}}.
		\end{equation} 
	\end{Proposition}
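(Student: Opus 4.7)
The plan is to combine small-data global well-posedness of 3D Navier-Stokes at the scaling-critical level $s=-1+\frac{3}{p}$ with a propagation-of-regularity argument across the range $s\in[-1+\frac{3}{p},1+\frac{3}{p}]$, and finally to read off the bounds on $\partial_t v$ and $\nabla\pi_v$ directly from the equation. By Remark \ref{c3.1} and assumption (\ref{a7}), the datum $u(t_0)$ satisfies $\|u(t_0)\|_{\dot{B}^{-1+3/p}_{p,1}}\leq C\varepsilon$, which places us in the small-data regime.

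\textbf{Step 1 (critical level).} I apply $\dot{\Delta}_j\mathbb{P}$ to (\ref{dd1}) and write the Duhamel formula with the standard heat-kernel dyadic smoothing estimate
\[
\|\dot{\Delta}_j v(t)\|_{L^p}\lesssim e^{-c2^{2j}(t-t_0)}\|\dot{\Delta}_j u(t_0)\|_{L^p}+\int_{t_0}^t e^{-c2^{2j}(t-\tau)}\|\dot{\Delta}_j\mathbb{P}(v\cdot\nabla v)\|_{L^p}\,d\tau.
\]
Multiplying by $2^{j(-1+3/p)}$, taking $L^{\infty}_t$ or $L^1_t$ in time, summing in $j$ with $\ell^1$ weight, and using Bony's decomposition (\ref{a19}) to bound the transport nonlinearity by $\|v\|_{\widetilde{L}^{\infty}_t(\dot{B}^{-1+3/p}_{p,1})}\|v\|_{L^1_t(\dot{B}^{1+3/p}_{p,1})}$, I obtain for $X(t):=\|v\|_{\widetilde{L}^{\infty}_t(\dot{B}^{-1+3/p}_{p,1})}+\|v\|_{L^1_t(\dot{B}^{1+3/p}_{p,1})}$ the quadratic inequality $X(t)\leq C\|u(t_0)\|_{\dot{B}^{-1+3/p}_{p,1}}+CX(t)^2$. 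A standard continuity/bootstrap argument then yields the global bound $X(t)\leq C\|u_0\|_{\dot{B}^{-1+3/p}_{p,1}}$.

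\textbf{Step 2 (propagation of regularity).} For general $s\in(-1+\tfrac{3}{p},1+\tfrac{3}{p}]$, I run the same dyadic scheme but use the paraproduct estimate
\[
\|v\cdot\nabla v\|_{\dot{B}^s_{p,1}}\lesssim \|v\|_{\dot{B}^{1+\frac{3}{p}}_{p,1}}\|v\|_{\dot{B}^s_{p,1}},
\]
valid in this range. Grönwall then gives
\[
\|v\|_{\widetilde{L}^{\infty}_t(\dot{B}^s_{p,1})}+\|\Delta v\|_{L^1_t(\dot{B}^s_{p,1})}\leq C\|u(t_0)\|_{\dot{B}^s_{p,1}}\exp\Bigl(C\|v\|_{L^1_t(\dot{B}^{1+\frac{3}{p}}_{p,1})}\Bigr),
\]
and since the exponent is small by Step 1, the exponential factor is harmless. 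Combining with the interpolation bound $\|u(t_0)\|_{\dot{B}^s_{p,1}}\leq C\|u_0\|_{\dot{B}^{-1+3/p}_{p,1}}$ from (\ref{cc62}) yields the $v$ and $\Delta v$ parts of (\ref{d3}).

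\textbf{Step 3 (pressure and time derivative).} Taking $\operatorname{div}$ of (\ref{dd1}) gives $-\Delta\pi_v=\operatorname{div}\operatorname{div}(v\otimes v)$, whence the Riesz-transform continuity on $\dot{B}^s_{p,1}$ and the product law yield
\[
\|\nabla\pi_v\|_{\dot{B}^s_{p,1}}\lesssim \|v\otimes v\|_{\dot{B}^{s+1}_{p,1}}\lesssim \|v\|_{\dot{B}^{\frac{3}{p}}_{p,1}}\|v\|_{\dot{B}^{s+1}_{p,1}},
\]
so integrating in time against Step 1 completes the $\nabla\pi_v$ bound in (\ref{d3}). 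For (\ref{d4}), I use $\partial_t v=\mathbb{P}(\Delta v-v\cdot\nabla v)$: the Leray projector $\mathbb{P}$ is continuous on $\dot{B}^\sigma_{p,1}$, so applying the previous estimates on both the Laplacian and the convection terms in $\dot{B}^{-1+3/p}_{p,1}$ (for the $\widetilde{L}^{\infty}$ and $L^1$ bounds) and in $\dot{B}^{1+3/p}_{p,1}$ (for the second $L^1$ bound) gives (\ref{d4}) immediately.

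\textbf{Main obstacle.} The genuinely delicate point is Step 1: closing the bilinear estimate at the scaling-critical level and running a continuity argument global-in-time rather than local. Once this is in place, Steps 2 and 3 are a linear Grönwall propagation and elliptic bookkeeping, respectively, and hinge only on the Bony paradifferential machinery recalled in Section 2.
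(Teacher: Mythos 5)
The paper itself cites this proposition from Abidi--Gui--Zhang and gives no proof, so there is no in-text argument to compare against; your reconstruction must be judged against the standard argument for such statements. Your Steps~1 and~3 are fine: the critical small-data fixed point, the interpolation to pass from $\|u(t_0)\|_{\dot B^s_{p,1}}$ to $\|u_0\|_{\dot B^{-1+3/p}_{p,1}}$ (which is exactly (\ref{cc62})), and the elliptic/algebraic bookkeeping for $\nabla\pi_v$ and $\partial_t v$ are all correct. Step~2, however, contains a genuine gap. The claimed bound
\[
\|v\cdot\nabla v\|_{\dot B^s_{p,1}}\lesssim \|v\|_{\dot B^{1+\frac{3}{p}}_{p,1}}\|v\|_{\dot B^s_{p,1}}
\]
is false on the required range. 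The low--high paraproduct $T_v\nabla v$ is controlled only by $\|v\|_{L^\infty}\|\nabla v\|_{\dot B^s_{p,1}}\lesssim\|v\|_{\dot B^{3/p}_{p,1}}\|v\|_{\dot B^{s+1}_{p,1}}$, and the factor $\|v\|_{\dot B^{s+1}_{p,1}}$ cannot be traded for $\|v\|_{\dot B^s_{p,1}}$: for $s>\tfrac{3}{p}$ (hence in particular at the endpoint $s=1+\tfrac{3}{p}$) the left side genuinely depends on more derivatives of $v$ than your right side carries. So the Gr\"onwall closure you invoke does not run with $v\cdot\nabla v$ treated as a pure source term in a Duhamel scheme.

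Two repairs are available. The cleaner one---and what the cited reference as well as this paper's own dyadic estimates (see the treatment of $[u\cdot\nabla,\dot\Delta_j\mathbb P]u$ in the proof of Proposition~\ref{Proposition-4.3}) actually do---is to keep $v\cdot\nabla\dot\Delta_j v$ inside the dyadic $L^p$ energy estimate, where it vanishes by $\operatorname{div}v=0$, and then control the commutator $[v\cdot\nabla,\dot\Delta_j\mathbb P]v$ via Lemma~\ref{Lemma-2.3}. That lemma yields precisely $c_j\,2^{-js}\|\nabla v\|_{L^\infty\cap\dot B^{3/p}_{p,1}}\|v\|_{\dot B^s_{p,1}}$, so the Gr\"onwall exponent becomes $C\int_{t_0}^t\|v\|_{\dot B^{1+3/p}_{p,1}}\,d\tau$, which is small by Step~1; this is also why the admissible range stops at $s=1+\tfrac{3}{p}$, the upper edge permitted by Lemmas~\ref{Lemma-2.3}--\ref{Lemma-2.4}. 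Alternatively, if you insist on the Duhamel formulation, you must split the nonlinearity, use $\|v\cdot\nabla v\|_{\dot B^s_{p,1}}\lesssim\|v\|_{\dot B^{3/p}_{p,1}}\|v\|_{\dot B^{s+1}_{p,1}}$, interpolate $\|v\|_{\dot B^{s+1}_{p,1}}\lesssim\|v\|_{\dot B^s_{p,1}}^{1/2}\|v\|_{\dot B^{s+2}_{p,1}}^{1/2}$, apply Cauchy--Schwarz in time, and absorb the $L^1_t\dot B^{s+2}_{p,1}$ piece into the left-hand dissipation using the smallness of $\|v\|_{\widetilde L^\infty_t(\dot B^{-1+3/p}_{p,1})}$ from Step~1. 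Either route closes the argument; the inequality you wrote does not.
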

	
	The goal of this section is to prove that the following Proposition holds, which is the most important ingredient used in the proof of Theorem \ref{Theorem-1}, i.e.,
	
	\begin{Proposition}\label{Proposition-5.5}
		Let $M\stackrel{\mathrm{ def }}{=}\|\nabla\mu(\rho_0)\|_{L^{r}\cap L^3}$ with $r\in]3,+\infty[$, $n\in]2,\frac{6\delta_{-}}{1+\delta_{-}}[$ with $\delta\in]1/2,3/4[$ and $m\in(3,\min\{r,6\}).$ Then there exists some positive constant $\varepsilon,$ which depends only on $n,m,\bar{\mu}, \underline{\mu}$ and $M$ and $\|u_0\|_{\dot{H}^{-2\delta}}$ such that if $u$ and $w$ are the unique local strong solution of $(\ref{a1})$ and (\ref{dd2}) on $\mathbb{R}^{3}\times[t_0,T]$ respectively, and satisfying
		\begin{equation}\label{d39}
		\sup_{t\in[t_0,T]}\|\nabla\mu(\rho)\|_{L^{r}\cap L^3}\leq 4M\quad{\rm and}\quad
		\sup_{t\in[t_{0},T]}\|\nabla w\|^{2}_{L^{2}}\leq  4C\|u_{0}\|^{2}_{\dot{B}^{-1+\frac{3}{p}}_{p,1}},
		\end{equation}
		then the following estimates hold:
		\begin{equation}\label{d40}
		\sup_{t\in[t_0,T]}\|\nabla\mu(\rho)\|_{L^{r}\cap L^3}\leq 2M\quad{\rm and}\quad
		\sup_{t\in[t_{0},T]}\|\nabla w\|^{2}_{L^{2}}\leq 2C\|u_{0}\|^{2}_{\dot{B}^{-1+\frac{3}{p}}_{p,1}},
		\end{equation}
		provided that $\|u_0\|_{\dot{B}^{-1+\frac{3}{p}}_{p,1}}\leq \varepsilon.$
	\end{Proposition}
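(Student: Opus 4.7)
The proof is a standard continuation/bootstrap argument: under the two a priori bounds, each with a loose factor of $4$, I want to recover both with the improved factor $2$, provided the single small parameter $\|u_0\|_{\dot{B}^{-1+3/p}_{p,1}}\leq\varepsilon$ is sufficiently small. The engine is that the transport equation
$$
\partial_t\mu(\rho)+u\cdot\nabla\mu(\rho)=0
$$
gives, after differentiating spatially and using $\operatorname{div} u=0$, the classical pointwise-in-time bound
$$
\|\nabla\mu(\rho)\|_{L^\infty_{[t_0,T]}(L^r\cap L^3)}\leq \|\nabla\mu(\rho_0)\|_{L^r\cap L^3}\exp\Bigl(C\int_{t_0}^{T}\|\nabla u\|_{L^\infty}d\tau\Bigr).
$$
So the first bootstrap improvement (from $4M$ to $2M$) follows once I show that $\|\nabla u\|_{L^1([t_0,T];L^\infty)}$ is smaller than an absolute constant depending on $M$, $\|u_0\|_{\dot H^{-2\delta}}$, etc., and can in fact be made arbitrarily small with $\varepsilon$.

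To produce this smallness of $\|\nabla u\|_{L^1_T(L^\infty)}$, I use the decomposition $u=v+w$. The $v$-contribution is controlled once and for all by Proposition \ref{Proposition-5.1} and the embedding $\dot B^{1+3/p}_{p,1}\hookrightarrow L^\infty$, giving $\|\nabla v\|_{L^1_T(L^\infty)}\lesssim\|u_0\|_{\dot B^{-1+3/p}_{p,1}}$. For $w$, I interpolate $\|\nabla w\|_{L^\infty}$ between a negative-regularity norm of $w$ (accessed through the propagated $\dot H^{-2\delta}$ regularity inherited from $u$, which yields the polynomial time-decay stated in (\ref{aa18})) and $\|\nabla^2 w\|_{L^2}$; the decay in time compensates the $L^1_t$ integration, and the outcome is bounded by a positive power of $\|u_0\|_{\dot B^{-1+3/p}_{p,1}}$ times a constant depending only on the bootstrap data. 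This is exactly the place where the requirement $u_0\in\dot H^{-2\delta}$ pays off.

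For the second improvement (from $4C\|u_0\|^2$ to $2C\|u_0\|^2$), I carry out an $\dot H^1$ energy estimate for $w$: test the second equation of (\ref{dd2}) with $\partial_t w$ and integrate on $[t_0,T]$. Since $w(t_0)=0$, there are no initial boundary terms. The left-hand side produces $\|\sqrt{\rho}\partial_t w\|_{L^2_T(L^2)}^2+\|\sqrt{\mu(\rho)}\mathbb{D}w\|_{L^\infty_T(L^2)}^2$. The right-hand side source terms $(1-\rho)(\partial_t v+v\cdot\nabla v)$, $-\rho w\cdot\nabla v$, and $\operatorname{div}(2(\mu(\rho)-1)\mathbb{D}v)$ are all estimated using Proposition \ref{Proposition-5.1} (which gives $\partial_t v$, $v\cdot\nabla v$, $\Delta v$ in $L^1_T(L^2)\cap L^2_T(L^2)$-type norms, all small) together with the bootstrap hypotheses on $\nabla\mu(\rho)$ and $\nabla w$, plus the step above giving smallness of $\|\nabla v\|_{L^1_T(L^\infty)}$. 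Corollary \ref{c-2.1} then turns the control of $\rho\partial_t w$ into the control $\|\nabla^2 w\|_{L^2_T(L^2)}\lesssim\|u_0\|_{\dot B^{-1+3/p}_{p,1}}$, feeding back into the previous step to close the loop.

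The main obstacle is the quadratic coupling between the two bootstrap quantities: the $\nabla w$ estimate requires the $\nabla\mu(\rho)$ bound (to invoke Corollary \ref{c-2.1}), the $\nabla\mu(\rho)$ bound requires smallness of $\|\nabla u\|_{L^1_T(L^\infty)}$, and that in turn requires the $\nabla w$ estimate via interpolation. The trick is to track which factors are multiplied by positive powers of $\|u_0\|_{\dot B^{-1+3/p}_{p,1}}$ and which are merely bounded by constants depending on $M$ and $\|u_0\|_{\dot H^{-2\delta}}$: every dangerous cross-term ends up carrying a net positive power of $\|u_0\|_{\dot B^{-1+3/p}_{p,1}}$, so that for $\varepsilon$ chosen small enough (depending on $M$, $\bar\mu$, $\underline\mu$, $n$, $m$, and $\|u_0\|_{\dot H^{-2\delta}}$), the factor $4$ can indeed be compressed to $2$ in both inequalities simultaneously.
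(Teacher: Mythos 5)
Your bootstrap skeleton is the right one: differentiate the transport equation for $\mu(\rho)$ to reduce the first improvement to smallness of $\|\nabla u\|_{L^1([t_0,T];L^\infty)}$, and close the second improvement by testing $(\ref{dd2})_2$ against $\partial_t w$ (this is exactly the paper's Lemma \ref{Lemma-5.3}, Step~2, with Corollary \ref{c-2.1} converting $\rho\,\partial_t w$ into $\nabla^2 w$ in $L^2$). The $v$-contribution $\|\nabla v\|_{L^1_T(L^\infty)}\lesssim\|u_0\|_{\dot B^{-1+3/p}_{p,1}}$ via Proposition \ref{Proposition-5.1} and $\dot B^{1+3/p}_{p,1}\hookrightarrow\dot B^{1}_{\infty,1}$ is also fine.

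The gap is in how you propose to control the $w$-contribution to $\|\nabla u\|_{L^1_T(L^\infty)}$. Interpolating $\|\nabla w\|_{L^\infty}$ between an $L^2$-based negative-regularity norm of $w$ (you name $\dot H^{-2\delta}$) and $\|\nabla^2 w\|_{L^2}$ cannot give an $L^\infty(\mathbb R^3)$ bound on $\nabla w$: every interpolate of $\dot H^{-2\delta}$ and $\dot H^{2}$ lies in some $\dot H^s$ with $s\in(-2\delta,2)$, and no $\dot H^s(\mathbb R^3)$ with $s\leq 5/2$ embeds into $\dot W^{1,\infty}(\mathbb R^3)$ (the borderline $s=5/2$ fails). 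Time-decay weights do nothing to repair the spatial integrability. The paper circumvents this precisely by \emph{not} splitting $u$ in the $L^\infty$ step. In Lemma \ref{Lemma-5.6} it keeps the full $u$ and applies Gagliardo--Nirenberg
\begin{equation*}
\|\nabla u\|_{L^1_T(L^\infty)}\lesssim \|\nabla u\|^{\alpha}_{L^1_T(L^n)}\,\|\nabla^2 u\|^{1-\alpha}_{L^1_T(L^m)}, \qquad m\in(3,\min\{r,6\}),
\end{equation*}
so the high endpoint lives in $L^m$ with $m>3$ and is supplied by the $L^m$-Stokes regularity of Lemma \ref{Lemma-5.1} applied to the full momentum equation, together with the time-decay of Lemmas \ref{Lemma-5.4}--\ref{Lemma-5.5}; crucially, that factor is only bounded (by $(1+\|u(t_0)\|_{H^2})\exp\{C\mathcal{H}_0\}$), not small. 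The smallness lives entirely in the $L^n$ factor, and that is the only place $u=v+w$ is used: one more Gagliardo--Nirenberg reduces the $L^n$ bound to $\|\nabla u\|_{L^2_T(L^6)}$, which is split as $\|\nabla v\|_{L^2_T(L^6)}+\|\nabla^2 w\|_{L^2_T(L^2)}\lesssim\|u_0\|_{\dot B^{-1+3/p}_{p,1}}$. So the net small power $\|u_0\|^{3\alpha(n-2)/(2n)}$ in Lemma \ref{Lemma-5.6} does not come from making $\|\nabla w\|_{L^1_T(L^\infty)}$ small on its own; your version would require an $L^m$-estimate ($m>3$) for $\nabla^2 w$ (hence for $w_t$ in $L^m$), which is neither established nor needed. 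Without the $L^m$-Stokes mechanism, the argument cannot reach $L^\infty$ and the proof does not close.
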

	
	\par Before proving Proposition \ref{Proposition-5.5}, we establish some necessary \emph{a priori} estimates, see Lemmas {\ref{Lemma-u1}-\ref{Lemma-5.6}}. 
 	\subsection{The {\em a priori} estimates} 
 		As a convention in the remaining of this section, we shall always  denote $s_1\in[\frac{3}{p},1+\frac{3}{p}]$. For simplicity, in what follows, we just present the \emph{a priori} estimates for smooth enough solutions of (\ref{dd2}) on $[0,T^{*}[.$
 	\subsubsection{$L^2$ estimate of $u$}
 	\begin{Lemma}\label{Lemma-u1}
 		Suppose $(\rho,u,\pi)$ is the unique local strong solution to $(\ref{a1})$ satisfying $(\ref{a7})$, then it holds that
 		\begin{equation}\label{dd38}
 		\sup_{t\in[t_0,T]}\int_{\mathbb{R}^3}\rho|u|^{2}dx+\int^{T}_{t_0}\int_{\mathbb{R}^3}|\nabla u|^{2}dxdt\leq C\|u(t_0)\|^{2}_{L^{2}},
 		\end{equation}
 		where $C$ depends on $\overline{\mu}$, $\underline{\mu}$, $M$, $\|\rho_0-1\|_{B^{\frac{3}{q}}_{q,1}}$ and $\|u_0\|_{\dot{H}^{-2\delta}}$.
 	\end{Lemma}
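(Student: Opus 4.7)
The proof will be the textbook weighted $L^2$ energy estimate for the inhomogeneous Navier--Stokes system, adapted to the density-dependent viscosity setting. My plan is the following. First, I would take the $L^2$ inner product of the momentum equation $(\ref{a1})_2$ with $u$. The pressure contribution vanishes by the divergence-free constraint $(\ref{a1})_3$, and the two inertial terms $\partial_t(\rho u)\cdot u + \operatorname{div}(\rho u\otimes u)\cdot u$ collapse to $\tfrac12\tfrac{d}{dt}(\rho|u|^2)$ upon invoking the continuity equation $(\ref{a1})_1$. This yields the classical identity
$$
\frac{1}{2}\frac{d}{dt}\int_{\mathbb{R}^3}\rho|u|^2\,dx + \int_{\mathbb{R}^3}2\mu(\rho)|\mathbb{D}u|^2\,dx = 0.
$$

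Next, I would combine the ellipticity bound $\mu(\rho)\geq\underline{\mu}$ from \reff{a6} with the divergence-free identity $2\int|\mathbb{D}u|^2\,dx = \int|\nabla u|^2\,dx$ (which follows from integrating $\partial_i u_j\,\partial_j u_i$ by parts against $\operatorname{div} u = 0$) to obtain
$$
\frac{1}{2}\frac{d}{dt}\int_{\mathbb{R}^3}\rho|u|^2\,dx + \underline{\mu}\int_{\mathbb{R}^3}|\nabla u|^2\,dx \leq 0.
$$
Integrating from $t_0$ to $T$ and applying the transport property \reff{aa7}, which guarantees that $\rho(t,x)$ stays trapped between its initial upper and lower bounds uniformly in $t$ (with bounds controlled by $\|\rho_0-1\|_{B^{3/q}_{q,1}}$ via the embedding $B^{3/q}_{q,1}\hookrightarrow L^\infty$), I can convert $\int\rho(t_0)|u(t_0)|^2\,dx$ into $C\|u(t_0)\|_{L^2}^2$ and similarly pass from $\int\rho|u|^2$ to $\|u\|_{L^2}^2$ up to the constant $C$, giving \reff{dd38}. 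The finiteness of $\|u(t_0)\|_{L^2}$ is ensured by Remark \ref{c3.1} together with the embedding $H^2\hookrightarrow L^2$, which is where $\|u_0\|_{\dot{H}^{-2\delta}}$ enters the dependence of $C$.

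There is no serious obstacle in this lemma: the argument does not interact with the bootstrap hypotheses \reff{d39} on $\nabla\mu(\rho)$ or $\nabla w$, nor with the smallness of $\|u_0\|_{\dot B^{-1+3/p}_{p,1}}$. The quantitative dependence on $M$ and $\|\rho_0-1\|_{B^{3/q}_{q,1}}$ listed in the statement is merely packaging: these quantities only appear through the uniform two-sided bounds of $\rho$ and the lower bound $\underline{\mu}$ of $\mu(\rho)$. The only mild subtlety worth double-checking is that the manipulations leading to the energy identity are justified at the level of the strong solution provided by Lemma \ref{Lemma-2} and Remark \ref{c3.1}, which guarantee sufficient regularity of $(\rho,u)$ on $[t_0,T]$ for all the integrations by parts to be valid.
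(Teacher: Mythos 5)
Your proposal is correct and follows essentially the same route as the paper: take the $L^2$ inner product of the momentum equation with $u$, use the divergence-free constraint and the continuity equation to reduce to the energy identity $\frac{1}{2}\frac{d}{dt}\|\sqrt{\rho}u\|_{L^2}^2 + 2\int\mu(\rho)\mathbb{D}u:\mathbb{D}u\,dx = 0$, then integrate in time and use the lower bound $\mu(\rho)\ge\underline{\mu}$ together with the uniform two-sided bounds on $\rho$ propagated by the transport equation. The paper states this more tersely, but your filled-in justifications (the identity $2\int|\mathbb{D}u|^2 = \int|\nabla u|^2$ for divergence-free fields, the density bounds via \reff{aa7}, the finiteness of $\|u(t_0)\|_{L^2}$ via Remark \ref{c3.1}) are all the right supporting facts.
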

 	\begin{proof}
 		Taking the $L^{2}$ inner product of $(\ref{a1})_{2}$ with $u$ and using the fact $\operatorname{div} u=0,$ we obtain
 		\begin{equation}\label{d35}
 		\frac{1}{2}\frac{d}{dt}\|\sqrt{\rho}u\|^{2}_{L^{2}}+2\int_{\mathbb{R}^{3}}\mu(\rho)\mathbb{D}u:\mathbb{D}udx=0.
 		\end{equation}
 		Integrating in time over~$[t_{0}, t]$~yields
 		\begin{equation}
 		\|\sqrt{\rho}u\|^{2}_{L^{\infty}([t_{0},T];L^{2})}+\|\nabla u\|^{2}_{L^{2}([t_{0},T];L^{2})}\leq C\|u(t_0)\|^{2}_{L^{2}}.
 		\end{equation}
 		This completes the proof of Lemma \ref{Lemma-u1}.
 	\end{proof}
 	    \par Before estimating $u$, we first need the following large time-decay estimates, the proof process can be referred to \cite{NW2023NS}. 
 	    
 		\begin{Corollary}\label{c-4.2} $(\cite{NW2023NS})$
 		Under the assumptions of Theorem \ref{Theorem-1}, we have
 		\begin{equation}\label{delta-1}
 		\|u\|^{2}_{L^{2}}\leq C\mathcal{H}_{0}\langle t\rangle ^{-2\delta} \quad{\rm for~any}~ t\in[t_{0},T],
 		\end{equation}
 		and 
 		\begin{equation}\label{d44}
 		\begin{aligned}
 		\|t^{\delta}u\|^2_{L^{\infty}([t_{0},T];L^{2})}+\|t^{\delta_{-}}\nabla u\|^2_{L^{2}([t_{0},T];L^{2})}\leq C\mathcal{H}_{0},
 		\end{aligned}
 		\end{equation}
 		where 
 		\begin{equation}\label{h}
 		\mathcal{H}_{0}\stackrel{\mathrm{ def }}{=}1+\|u(t_{0})\|^{2}_{\dot{H}^{-2\delta}}+\|u(t_{0})\|^{2}_{H^1}(1+\|1-\rho_{0}\|^{2}_{L^{2}}+\|u(t_{0})\|^{2}_{L^{2}}).
 		\end{equation}
 	\end{Corollary}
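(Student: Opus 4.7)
The plan is to establish the algebraic time-decay (\ref{delta-1}) via Schonbek's Fourier-splitting method, with the propagation of the negative Sobolev norm $\|u\|_{\dot H^{-2\delta}}$ playing the role of the low-frequency control. Once (\ref{delta-1}) is in hand, the weighted bounds (\ref{d44}) follow essentially as a corollary by multiplying the basic $L^2$ energy identity by a suitable positive power of $t$ and using the already-established pointwise decay to handle the resulting boundary term.

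First I would invoke Proposition \ref{Proposition-4.3} on the interval $[t_0,T]$. Combined with the uniform bounds on $\|u\|_{L^1_t(\dot B^{1+3/p}_{p,1})}$, $\|u\|_{L^\infty_t(\dot B^{-1+3/p}_{p,1})}$ supplied by Lemma \ref{Lemma-2}, and with the $\widetilde L^\infty_t(B^{3/q}_{q,1})$-bounds on $a,b,\lambda$ (which follow from the transport equation for $a$ and the smoothness of the nonlinear functions $b(a),\lambda(a)$), this yields a uniform-in-time control
\begin{equation*}
\|u\|_{\widetilde L^\infty([t_0,T];\dot H^{-2\delta})}\le C\mathcal H_0^{1/2}.
\end{equation*}

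Next I would apply Fourier splitting to the basic energy identity (\ref{d35}). Using Korn's inequality and $\mu(\rho)\ge \underline\mu$ I bound the dissipation from below by $c\|\nabla u\|_{L^2}^2$. Splitting Parseval's identity at the ball $\{|\xi|\le g(t)\}$ with $g(t)^2 = K/\langle t\rangle$ gives
\begin{equation*}
\|\nabla u\|_{L^2}^2 \ge g(t)^2\|u\|_{L^2}^2 - g(t)^{2+4\delta}\|u\|_{\dot H^{-2\delta}}^2,
\end{equation*}
and together with $\underline\rho\le\rho\le\overline\rho$ this leads to a differential inequality of the form
\begin{equation*}
\frac{d}{dt}\|\sqrt\rho u\|_{L^2}^2 + \frac{\alpha}{\langle t\rangle}\|\sqrt\rho u\|_{L^2}^2 \le \frac{C\mathcal H_0}{\langle t\rangle^{1+2\delta}},
\end{equation*}
where $K$ is chosen large enough that $\alpha>2\delta$. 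A standard Gronwall argument then delivers (\ref{delta-1}). For the weighted estimate (\ref{d44}), the $L^\infty_t$-part is immediate from (\ref{delta-1}) together with $t\le\langle t\rangle$. For the $L^2_t$-part, I would multiply (\ref{d35}) by $t^{2\delta_-}$, integrate by parts in $t$, and use (\ref{delta-1}) to control the extra term $2\delta_-\int_{t_0}^T t^{2\delta_- -1}\|\sqrt\rho u\|_{L^2}^2\,dt \le C\mathcal H_0\int_{t_0}^T t^{2\delta_- -1-2\delta}\,dt$, the integral being convergent precisely because $\delta_-<\delta$.

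The main obstacle is the first step: propagating the $\dot H^{-2\delta}$-norm uniformly in time through the variable-coefficient momentum equation, which is exactly the content of Proposition \ref{Proposition-4.3} and requires the delicate paraproduct analysis and the careful choice of the dyadic truncation index $k$ presented there. Once that estimate is at our disposal, the remainder is a variable-density adaptation of Schonbek's classical scheme; the only minor technicalities are tracking the $\sqrt\rho$ factor (handled by the uniform bounds on $\rho$) and fixing the splitting threshold $K$ large enough so that the linear damping rate on the left exceeds the forcing exponent $2\delta$ on the right.
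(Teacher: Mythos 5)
Your reconstruction of the second half, the derivation of~\reff{d44}, is exactly what the paper does: multiply the energy identity~\reff{d35} by $t^{2\delta_-}$, integrate in time, and control the extra term $\int_{t_0}^T t^{2\delta_- -1}\|u\|_{L^2}^2\,dt$ using the pointwise decay~\reff{delta-1} together with $2\delta_- -1-2\delta<-1$. The $L^\infty_t$-part with weight $t^\delta$ follows directly from~\reff{delta-1}, as you say. For~\reff{delta-1} itself the paper simply cites~\cite{NW2023NS}; your Schonbek Fourier-splitting sketch (split Plancherel at $|\xi|^2\le K/\langle t\rangle$, bound the low-frequency part by $g(t)^{4\delta}\|u\|_{\dot H^{-2\delta}}^2$, absorb, multiply by $\langle t\rangle^\alpha$ with $\alpha>2\delta$) is the standard route and is almost certainly what that companion paper does, so this part is consistent.

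One flag worth raising on your first step. You assert a time-uniform bound $\|u\|_{\widetilde L^\infty([t_0,T];\dot H^{-2\delta})}\le C\mathcal H_0^{1/2}$ by ``invoking Proposition~\ref{Proposition-4.3}'' together with Lemma~\ref{Lemma-2}. But Lemma~\ref{Lemma-2} only supplies the $\dot B^{1+3/p}_{p,1}$-in-time control on the local existence window $[0,T^*]$, and the estimate~\reff{c56} in Proposition~\ref{Proposition-4.3} carries explicit growing prefactors, a $t^{1/4}$ outside and a $Ct\,2^{(2+6/q)k}\|(b,\lambda)\|_{L^q}^2$ inside the exponential. So as written it is a local-in-time bound, not a uniform one on $[t_0,\infty)$. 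To run Fourier splitting globally one must either (i) embed the $\dot H^{-2\delta}$-propagation inside the bootstrap of Proposition~\ref{Proposition-5.5}, where the global smallness of $\|u\|_{L^1([t_0,\infty);\dot B^{1+3/p}_{p,1})}$ and the decay being proved reinforce each other, or (ii) re-derive a version of the $\dot H^{-2\delta}$ estimate whose constant does not grow in $t$. The paper hides exactly this step behind the citation to~\cite{NW2023NS}; your proposal correctly identifies it as the bottleneck but slightly misattributes the mechanism to the local Proposition~\ref{Proposition-4.3}.
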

 	\begin{proof} 
 		The large time decay estimate of $u$ in (\ref{delta-1}) is from the reference \cite{NW2023NS}, we have omitted the details here. Multiplying (\ref{d35}) by $t^{2\delta_{-}}$ and then integrating the resulting inequality over $[t_{0},T]$, we get that
 		\begin{equation}\label{d45}
 		\begin{aligned}
 		&\|t^{\delta_{-}}u\|^2_{L^{\infty}([t_{0},T];L^{2})}+\|t^{\delta_{-}}\nabla u\|^2_{L^{2}([t_{0},T];L^{2})}\\
 		\leq& C\|u(t_0)\|_{L^2}+\int^{T}_{t_{0}}t^{(-1)_{-}}\|t^{\delta}u(t)\|^{2}_{L^{2}}dt\leq C\mathcal{H}_{0},
 		\end{aligned}
 		\end{equation}
 		which completes the proof of this corollary.
 	\end{proof}
 
 	\subsubsection{$\dot{H}^1$ estimate of $u$} 
 	If we make the $\dot{H}^1$-normal estimate of $u$ directly, then the estimate cannot be closed, here we use the idea of decomposition, ie, $u=v+w$, so that we can not only get the smallness for the $L^\infty([t_0,\infty);H^1)$ estimate of $w$, but even the consistent bound for the $L^\infty([t_0,\infty);L^2)$ estimate of $\nabla u$. More deeply, in order to later estimate the smallness for the $L^{1}([t_0,\infty);L^\infty)$ of $\nabla u$, the smallness of the $H^1$-norm estimate of $w$ plays a crucial role.
 	\begin{Lemma}\label{Lemma-5.3}
 		Suppose $(v,\pi_{v})$ is the unique local strong solution to $(\ref{dd1})$ and $(\rho,w,\pi_{w})$ is the uniquely local solution to $(\ref{dd2})$ satisfying $(\ref{a7}).$ Then it holds that
 		\begin{equation}\label{d46}
 		\begin{aligned}
 		\int^{T}_{t_{0}}\int_{\mathbb{R}^{3}}\rho|w_{t}|^{2}dxdt+\sup_{t\in[t_{0},T]}\int_{\mathbb{R}^{3}}|\nabla w|^{2} dx\leq 2C\|u_{0}\|^{2}_{\dot{B}^{-1+\frac{3}{p}}_{p,1}},
 		\end{aligned}
 		\end{equation}
 		and
 		\begin{equation}\label{v}
 		\int^{T}_{t_{0}}\int_{\mathbb{R}^{3}}|v_{t}|^{2}dxdt+\sup_{t\in[t_{0},T]}\int_{\mathbb{R}^{3}}|\nabla v|^{2} dx\leq C\|u(t_0)\|^{2}_{\dot{H}^1},
 		\end{equation}
 		Assume that $u=v+w,$ one has
 		\begin{equation}\label{u}
 		\begin{aligned}
 		\int^{T}_{t_{0}}\int_{\mathbb{R}^{3}}|u_{t}|^{2}dxdt+\sup_{t\in[t_{0},T]}\int_{\mathbb{R}^{3}}|\nabla u|^{2} dx\leq C\left(1+\|u(t_0)\|^{2}_{\dot{H}^1}\right) ,
 		\end{aligned}
 		\end{equation}
 		where $C$ is independent of $t$.
 	\end{Lemma}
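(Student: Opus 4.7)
The plan is to perform $\dot H^1$-type energy estimates separately on the two subsystems \eqref{dd1} and \eqref{dd2}, exploiting the zero initial datum $w|_{t=t_0}=0$ together with the parabolic smallness bounds for $v$ furnished by Proposition~\ref{Proposition-5.1}, and then to reconstruct the estimate for $u$ via $u=v+w$.

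For $v$, since $\operatorname{div}v_t=0$, I test $\eqref{dd1}_2$ against $v_t$ to obtain
\[
\int_{\mathbb{R}^3}|v_t|^2\,dx+\frac12\frac{d}{dt}\|\nabla v\|_{L^2}^2 = -\int_{\mathbb{R}^3}(v\cdot\nabla v)\cdot v_t\,dx,
\]
the pressure term dropping out because $v_t$ is divergence-free. The right-hand side is bounded by H\"older and Young as $\tfrac12\|v_t\|_{L^2}^2+C\|v\|_{L^\infty}^2\|\nabla v\|_{L^2}^2$; integrating on $[t_0,T]$ and inserting $\|v\|_{L^2_t L^\infty}$ via the embedding $\dot B^{3/p}_{p,1}\hookrightarrow L^\infty$ applied to \eqref{d3} yields \eqref{v}, with constant $C\|u(t_0)\|_{\dot H^1}^2$ (note that smallness of initial data is not used here, only boundedness).

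For $w$, I test the momentum equation in \eqref{dd2} against $w_t$. The pressure again disappears because $\operatorname{div}w_t=0$, and the viscous term is rewritten via the transport equation $\partial_t\mu(\rho)+u\cdot\nabla\mu(\rho)=0$ together with integration by parts as
\[
\int\mu(\rho)\mathbb{D}w:\mathbb{D}w_t\,dx=\frac12\frac{d}{dt}\!\int\mu(\rho)|\mathbb{D}w|^2\,dx-\int u\mu(\rho)\!\cdot\!(\mathbb{D}w\!\cdot\!\nabla\mathbb{D}w)\,dx,
\]
the resulting cubic commutator being absorbed using the bootstrap \eqref{d39} on $\|\nabla\mu(\rho)\|_{L^r\cap L^3}$ together with Corollary~\ref{c-2.1} to trade $\|\nabla^2 w\|_{L^2}$ against $\|\rho w_t\|_{L^2}$, $\|\nabla w\|_{L^2}$ and terms driven by $v$. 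The four source terms in $\eqref{dd2}_2$ are then handled as follows: $(1-\rho)(\partial_t v+v\cdot\nabla v)$ and $\operatorname{div}(2(\mu(\rho)-1)\mathbb{D}v)$ are small because Proposition~\ref{Proposition-5.1} gives smallness of $\partial_t v$, $\nabla v$ and $\Delta v$ in the relevant parabolic norms controlled by $\|u_0\|_{\dot B^{-1+3/p}_{p,1}}$; $\rho w\cdot\nabla v$ is controlled by $\|\nabla v\|_{L^2_t L^\infty}$ paired with $\|w\|_{L^\infty_t L^2}$; and the transport $\rho(v+w)\cdot\nabla w$ splits into a small $v$-piece and a $w$-piece tamed by \eqref{d39}. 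After Young's inequality and integration over $[t_0,T]$, the vanishing initial datum $w(t_0)=0$ kills the boundary contribution, producing \eqref{d46} with constant $2C\|u_0\|^2_{\dot B^{-1+3/p}_{p,1}}$ provided $\varepsilon$ is chosen small enough to absorb all cubic-in-$w$ contributions.

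Finally, \eqref{u} follows from writing $u=v+w$, using the uniform positive lower bound on $\rho$ to pass between $|u_t|^2$ and $\rho|u_t|^2$, and combining \eqref{d46} with \eqref{v} by the triangle inequality. The main obstacle I anticipate is the cubic commutator $\int u\mu(\rho)\cdot(\mathbb{D}w\cdot\nabla\mathbb{D}w)\,dx$, which couples three derivatives with a factor of $\nabla\mu(\rho)$; closing it requires simultaneously the $L^3$-half of \eqref{d39} and the second-order elliptic estimate of Corollary~\ref{c-2.1}. A second delicate point is the forcing term $\operatorname{div}(2(\mu(\rho)-1)\mathbb{D}v)=2\nabla\mu(\rho)\cdot\mathbb{D}v+2(\mu(\rho)-1)\Delta v$, where the $L^1_t\dot B^{1+3/p}_{p,1}$-regularity of $v$ from Proposition~\ref{Proposition-5.1} is essential to produce $\Delta v\in L^1_t L^\infty$ and thereby match the time integrability required by $w_t$.
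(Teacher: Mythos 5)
Your plan for the three estimates tracks the paper's structure for $v$ (test against $v_t$) and for the $\dot H^1$ part of $w$ (test against $w_t$, rewrite $\partial_t\mu(\rho)$ via the transport equation, use Corollary~\ref{c-2.1} to trade $\|\nabla^2 w\|_{L^2}$). However, there is a genuine gap: you never establish the basic $L^2$ energy estimate for $w$, which the paper proves as a separate preliminary step (its Step~1, yielding $\|w\|^2_{L^\infty_t L^2}+\|\nabla w\|^2_{L^2_t L^2}\lesssim\|u_0\|^2_{\dot B^{-1+3/p}_{p,1}}$). This step is indispensable for closing your $\dot H^1$ estimate with $T$-uniform constants, and it does not follow from the ingredients you cite. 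After testing against $w_t$, applying Corollary~\ref{c-2.1} and Young's inequality, the right-hand side of the differential inequality inevitably contains terms of the form $\|\nabla w\|^2_{L^2}$ and $\|w\|^2_{L^2}\|v\|^2_{\dot B^{s_1}_{p,1}}$ (the latter from the source $\rho\,w\cdot\nabla v$), and their time integrals over $[t_0,T]$ must be small uniformly in $T$. The bootstrap hypothesis \eqref{d39} controls only $\sup_t\|\nabla w\|_{L^2}$, not $\int_{t_0}^T\|\nabla w\|^2_{L^2}\,dt$ (which could grow like $(T-t_0)\|u_0\|^2$) nor $\|w\|_{L^\infty_t L^2}$ (no Poincar\'e on $\mathbb{R}^3$), so bootstrapping alone cannot rescue these terms. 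You do mention pairing $\rho\,w\cdot\nabla v$ with ``$\|w\|_{L^\infty_t L^2}$'' but never say where a small bound on that quantity would come from.

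To fix the argument, first test $\eqref{dd2}_2$ against $w$: because $w(t_0)=0$ and all source terms are driven by $v$, whose $L^1_t$ and $L^2_t$ norms are small by Proposition~\ref{Proposition-5.1}, a Gronwall argument (with only $\exp\{\int\|\nabla v\|_{L^\infty}+\|\Delta v\|_{L^\infty}\}\lesssim e^{C\|u_0\|}$ in the exponent) yields the $T$-uniform bound $\|w\|^2_{L^\infty_t L^2}+\|\nabla w\|^2_{L^2_t L^2}\lesssim\|u_0\|^2_{\dot B^{-1+3/p}_{p,1}}$. Only then does the $w_t$ estimate close: the quadratic-in-$\nabla w$ terms integrate to something small, the $\|w\|^2_{L^2}\|v\|^2$ term is handled by the $L^2$ bound on $w$, and the higher powers $\|\nabla w\|^4,\|\nabla w\|^6$ are tamed by combining the bootstrap with $\int\|\nabla w\|^4\,dt\leq\sup\|\nabla w\|^2\cdot\int\|\nabla w\|^2\,dt\lesssim\|u_0\|^4$. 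With that insertion, the rest of your outline (including the handling of the viscous commutator and the forcing $\operatorname{div}(2(\mu(\rho)-1)\mathbb{D}v)$) is consistent with the paper's proof.
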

 	\begin{proof} First, using the fact that $\underline{\rho}<\rho_0<\overline{\rho}$, one deduces from the transport equation of (\ref{a1}) that $\underline{\rho}< \rho(x,t)<\overline{\rho}.$ We now decompose the proof of Lemma \ref{Lemma-5.3} into the following steps:\\
 		
 		\noindent\textbf{Step 1.} The estimates of $\|w\|_{L^{\infty}([t_0,T];L^2)}$ and $\|\nabla w\|_{{L}^{2}([t_0,T];L^2)}$.
 		
 		By using standard energy estimate to the $w$ equation of (\ref{dd2}) that
 		$$
 		\begin{aligned}
 		\frac{1}{2}\frac{d}{dt}\|\sqrt{\rho}w&\|^{2}_{L^2}+\|\nabla w\|^{2}_{L^2}
 		\leq \left|\int_{\mathbb{R}^{3}}(1-\rho)(\partial_{t}v+v\cdot\nabla v)|wdx \right|\\&+\left| \int_{\mathbb{R}^{3}}\rho w \cdot\nabla v|wdx\right|+\left| \int_{\mathbb{R}^{3}}2(\mu(\rho)-1)\mathbb{D}v:\mathbb{D}wdx\right|. 
 		\end{aligned}          
 		$$
 		Thus, we deduce that
 		\begin{equation}\label{d42}
 		\begin{aligned}
 	 \frac{1}{2}\frac{d}{dt}\|\sqrt{\rho}w\|^{2}_{L^2}+&\|\nabla w\|^{2}_{L^2}\leq \|1-\rho_0\|_{L^2} \|\partial_{t}v+v\cdot\nabla v\|_{L^\infty}\|w\|_{L^2}\\
 		&+\|\sqrt{\rho}w\|^2_{L^2}\|\nabla v\|_{L^\infty}+\|1-\mu(\rho_0)\|_{L^2}\|\nabla w\|_{L^2}\|\nabla v\|_{L^\infty},
 		\end{aligned}          
 		\end{equation}
 		where we use $\|\partial_{t} v\|_{L^{\infty}}\leq \|v\|_{L^\infty}\|\nabla v\|_{L^\infty}+\|\Delta v\|_{L^\infty}.$ Hence, one has
 		\begin{equation}\label{d43}
 		\begin{aligned}
 		\frac{d}{dt}\|\sqrt{\rho}w\|^{2}_{L^2}+\|\nabla w\|^{2}_{L^2}
 		\leq\frac{1}{2}\|\nabla w\|^2_{L^2}+&\|w\|_{L^2}\left(\|v\|_{L^\infty}\|\nabla v\|_{L^\infty}+\|\Delta v\|_{L^\infty}\right)\\
 		&+C\|\sqrt{\rho}w\|^2_{L^2}\|\nabla v\|_{L^\infty}+C\|\nabla v\|^{2}_{L^\infty}.
 		\end{aligned}
 		\end{equation}
 		Integrating in time over $[t_{0},T]$ yields
 		$$
 		\begin{aligned}
 		&\|w\|^{2}_{L^{\infty}([t_{0},T];L^{2})}+\|\nabla w\|^{2}_{L^{2}([t_{0},T];L^{2})}\\
 		\leq& C \int^{T}_{t_0}\|\nabla v\|^2_{L^\infty}dt\cdot\exp\left\{\int^{T}_{t_0}\|\nabla v\|_{L^\infty}dt+\int^{T}_{t_0}\|\Delta v\|_{L^\infty}dt\right\}.
 		\end{aligned}
 		$$
 		By virtue of Proposition \ref{Proposition-5.1},
 		\begin{equation}\label{w1}
 		\|w\|^{2}_{L^{\infty}([t_{0},T];L^{2})}+\|\nabla w\|^{2}_{L^{2}([t_{0},T];L^{2})}\leq C\|u_0\|^{2}_{\dot{B}^{-1+\frac{3}{p}}_{p,1}}.
 		\end{equation}
 		
 		\noindent\textbf{Step 2.} The estimates of $\| w_t\|_{{L}^{2}([t_0,T];L^2)}$ and $\|\nabla w\|_{L^{\infty}([t_0,T];L^2)}$.

 		Multiplying the momentum equations $(\ref{dd2})_2$ by $w_{t}$ and integrating over $\mathbb{R}^{3}$, we have
 		\begin{equation}\label{d47}
 		\begin{aligned}
 		&\int_{\mathbb{R}^{3}}\rho| w_{t}|^{2}dx+\frac{d}{dt}\int_{\mathbb{R}^{3}}\mu(\rho)\mathbb{D}w:\mathbb{D}w dx\\
 		\leq &\left|\int_{\mathbb{R}^{3}}(1-\rho)(\partial_{t}v+v\cdot\nabla v)|\partial_{t}w dx \right|+\left|\int_{\mathbb{R}^{3}}\rho (w\cdot\nabla v)|w_{t}dx \right|\\&+\left|\int_{\mathbb{R}^{3}}\operatorname{div}(2(\mu(\rho)-1)\mathbb{D}v)|\partial_{t}w dx\right|+\left|\int_{\mathbb{R}^{3}}\partial_{t}\mu(\rho)\mathbb{D}w:\mathbb{D}w dx\right|\\&+\left|\int_{\mathbb{R}^{3}}\rho (v+w)\cdot\nabla w|w_{t} dx\right|.
 		\end{aligned}
 		\end{equation}
 		\par Applying Gagliardo-Nirenberg inequality, it holds that
 		$$\begin{aligned}
 		\left|\int_{\mathbb{R}^{3}}(1-\rho)(\partial_{t}v+v\cdot\nabla v)|\partial_{t}w dx \right|&\leq
 		\|1-\rho_0\|_{L^2}\|\partial_{t}v+v\cdot\nabla v\|_{L^{\infty}}\|\sqrt{\rho}w_t\|_{L^2}\\
 		&\leq \frac{1}{8}\|\sqrt{\rho}w_t\|^{2}_{L^2}+C\|\Delta v\|^2_{L^\infty}+C\|v\|^4_{\dot{B}^{s_1}_{p,1}},
 		\end{aligned}$$
 		and
 		$$\begin{aligned}
 		\left|\int_{\mathbb{R}^{3}}\rho (w\cdot\nabla v)|w_{t}dx \right|
 		&\leq\|\sqrt{\rho}w_t\|_{L^{2}}\|w\|_{L^{2}}\|\nabla v\|_{L^{\infty}}\\
 		&\leq \frac{1}{8}\|\sqrt{\rho}w_t\|^{2}_{L^2}+C\| w\|^{2}_{L^2}\|v\|^{2}_{\dot{B}^{s_1}_{p,1}}.
 		\end{aligned}$$
 		Similarly,
 		$$\begin{aligned}
 		&\left|\int_{\mathbb{R}^{3}}\operatorname{div}(2(\mu(\rho)-1)\mathbb{D}v)|\partial_{t}w dx\right|\\
 		\leq &\|(\mu(\rho)-1)\Delta v+\nabla\mu(\rho)\cdot\mathbb{D}v\|_{L^2}\|\sqrt{\rho}w_t\|_{L^2}\\
 		\leq &\left\{\|\mu(\rho_0)-1\|_{L^2}\|\Delta v\|_{L^\infty}+4\|\nabla v\|_{L^6}\|\nabla\mu(\rho_0)\|_{L^3}\right\}\|\sqrt{\rho}w_t\|_{L^2}\\
 		\leq& \frac{1}{8}\|\sqrt{\rho}w_t\|^{2}_{L^2}+C\left(\|\mu(\rho_0-1)\|_{L^2},M\right)\left\{\|\Delta v\|^2_{L^\infty}+\|v\|^{2}_{\dot{B}^{s_1}_{p,1}}\right\}.
 		\end{aligned}$$
 		Along the same way,
 		$$\begin{aligned}
 		&\left|\int_{\mathbb{R}^{3}}\partial_{t}\mu(\rho)\mathbb{D}w:\mathbb{D}w dx\right|\leq\left|\int_{\mathbb{R}^{3}}(v+w)\cdot\nabla\mu(\rho)\mathbb{D}w:\mathbb{D}w dx\right|\\
 		\leq& \|\nabla \mu(\rho)\|_{L^3}\|\nabla w\|_{L^6}\|\nabla w\|_{L^2}\left( \|w\|_{L^\infty}+\|v\|_{L^\infty}\right)\\
 		\leq& 4\|\nabla \mu(\rho_0)\|_{L^3}\|\nabla^2 w\|_{L^2}\|\nabla w\|_{L^2}\left( \|\nabla w\|^{\frac{1}{2}}_{L^2}\|\nabla^{2}w\|^{\frac{1}{2}}_{L^2}+\|v\|_{L^\infty}\right)\\
 		\leq& C(M)\|\nabla w\|^{\frac{3}{2}}_{L^2}\|\nabla^{2}w\|^{\frac{3}{2}}_{L^2}+C(M)\|\nabla w\|_{L^2}\|\nabla^{2}w\|_{L^2}\|v\|_{\dot{B}^{s_1}_{p,1}}.
 		\end{aligned}$$
 		Here we have used the fact that
 		$$\partial_{t}\mu(\rho)+(v+w)\cdot\nabla \mu(\rho)=0,$$
 		which is a consequence of mass equation and the fact that $\operatorname{div} u=0.$ Notice that
 		$$\begin{aligned}
 		&\left|\int_{\mathbb{R}^{3}}\rho (v+w)\cdot\nabla w|w_{t} dx\right|
 		\\\leq &\|\sqrt{\rho}w_t\|_{L^2}\left\{ \|\nabla w\|_{L^3}\|w\|_{L^6}+\|\nabla w\|_{L^2}\|v\|_{L^\infty}\right\}\\
 		\leq &C\|\sqrt{\rho}w_t\|_{L^2}\|\nabla w\|^{\frac{3}{2}}_{L^2}\|\nabla^{2}w\|^{\frac{1}{2}}_{L^2}+\|\sqrt{\rho}w_t\|_{L^2}\|\nabla w\|_{L^2}\|v\|_{\dot{B}^{s_1}_{p,1}}.
 		\end{aligned}$$
 		Hence, by Young's inequality and Corollary \ref{c-2.1}, we obtain
 		\begin{equation}\label{d48}
 		\begin{aligned}
 		&\int_{\mathbb{R}^{3}}\rho| w_{t}|^{2}dx+\frac{d}{dt}\int_{\mathbb{R}^{3}}\mu(\rho)\mathbb{D}w:\mathbb{D}w dx\\
 		\leq& \frac{1}{2}\|\sqrt{\rho}w_t\|^{2}_{L^2}
 		+C\|\nabla w\|^{\frac{3}{2}}_{L^2}\left\{ \|\rho w_t\|_{L^{2}}+\|\nabla w\|_{L^{2}}+\|\nabla w\|^{3}_{L^{2}}+\|\Delta v\|_{L^\infty}+\|v\|_{\dot{B}^{s_1}_{p,1}}\right\}^{\frac{3}{2}}\\&+C\|\nabla w\|_{L^2}\| v\|_{\dot{B}^{s_1}_{p,1}}\left\{ \|\rho w_t\|_{L^{2}}+\|\nabla w\|_{L^{2}}+\|\nabla w\|^{3}_{L^{2}}+\|\Delta v\|_{L^\infty}+\|v\|_{\dot{B}^{s_1}_{p,1}}\right\}\\
 		&+C\|\sqrt{\rho}w_t\|_{L^2}\|\nabla w\|^{\frac{3}{2}}_{L^2}\left\{ \|\rho w_t\|_{L^{2}}+\|\nabla w\|_{L^{2}}+\|\nabla w\|^{3}_{L^{2}}+\|\Delta v\|_{L^\infty}+\|v\|_{\dot{B}^{s_1}_{p,1}}\right\}^{\frac{1}{2}}\\
 		&+C\|\Delta v\|^2_{L^\infty}+C\|v\|^{2}_{\dot{B}^{s_1}_{p,1}}+C\|v\|^{4}_{\dot{B}^{s_1}_{p,1}}+C\|w\|^{2}_{L^2}\|v\|^{2}_{\dot{B}^{s_1}_{p,1}}+C\|\nabla w\|^{2}_{L^2}\|v\|^{2}_{\dot{B}^{s_1}_{p,1}},
 		\end{aligned}
 		\end{equation}
 		which yields that
 		\begin{equation}\label{d49}
 		\begin{aligned}
 		&\int_{\mathbb{R}^{3}}\rho| w_{t}|^{2}dx+\frac{d}{dt}\int_{\mathbb{R}^{3}}\mu(\rho)\mathbb{D}w:\mathbb{D}w dx\\
 		\leq &\frac{7}{8}\|\sqrt{\rho}w_t\|^{2}_{L^2}+C\|\nabla w\|^{6}_{L^2}+C\|\nabla w\|^{4}_{L^2}+C\|\nabla w\|^{3}_{L^2}\\&+C\|\Delta v\|^2_{L^\infty}+C\|v\|^{2}_{\dot{B}^{s_1}_{p,1}}+C\|v\|^{4}_{\dot{B}^{s_1}_{p,1}}+C\|w\|^{2}_{L^2}\|v\|^{2}_{\dot{B}^{s_1}_{p,1}}.
 		\end{aligned}
 		\end{equation} 
 		With the help of Proposition \ref{Proposition-5.1}, we obatin for $t\in[t_0,T],$
 		\begin{equation}\label{dd50}
 		\begin{aligned}
 		&\int_{\mathbb{R}^{3}}\rho| w_{t}|^{2}dx+\frac{d}{dt}\int_{\mathbb{R}^{3}}\mu(\rho)\mathbb{D}w:\mathbb{D}w dx\\
 		\leq& C\|\nabla w\|^{6}_{L^2}+C\|\nabla w\|^{2}_{L^2}+C\|\Delta v\|^2_{L^\infty}+C\|v\|^{2}_{\dot{B}^{s_1}_{p,1}}+C\|w\|^{2}_{L^2}\|v\|^{2}_{\dot{B}^{s_1}_{p,1}}.
 		\end{aligned}
 		\end{equation}
 		Integrating with respect to time on $[t_{0}, T]$ and using (\ref{w1}) and gives
 		$$
 		\begin{aligned}
 		\int^{T}_{t_{0}}\int_{\mathbb{R}^{3}}&\rho| w_{t}|^{2}dxdt+\sup_{t\in[t_{0},T]}\int_{\mathbb{R}^{3}}|\nabla w|^{2} dx\leq C\|u_0\|^{2}_{\dot{B}^{-1+\frac{3}{p}}_{p,1}}\\&+C\int^{T}_{t_{0}}\|\nabla w\|^{6}_{L^{2}}dt+C\int^{T}_{t_{0}}\|\Delta v\|^{2}_{L^\infty}dt+C\int^{T}_{t_{0}}\|v\|^{2}_{\dot{B}^{s_1}_{p,1}}dt.
 		\end{aligned}
 		$$
 		Applying Proposition \ref{Proposition-5.1} and Gronwall's inequality, it implies that
 		$$
 		\begin{aligned}
 		&\int^{T}_{t_{0}}\int_{\mathbb{R}^{3}}\rho| w_{t}|^{2}dxdt+\sup_{t\in[t_{0},T]}\int_{\mathbb{R}^{3}}|\nabla w|^{2} dx
 		\leq C\|u_0\|^{2}_{\dot{B}^{-1+\frac{3}{p}}_{p,1}}\cdot\exp\{ C\int^{T}_{t_{0}}\|\nabla w\|^{4}_{L^{2}}dt\}.
 		\end{aligned}
 		$$
 	Under the assumptions that
 		\begin{equation}\label{d50}
 		\|u_0\|_{\dot{B}^{-1+\frac{3}{p}}_{p,1}}\leq \varepsilon_{1},
 		\end{equation}
 		and 
 		\begin{equation}
 		\sup_{t\in[t_{0},T]}\|\nabla w\|^{2}_{L^{2}}\leq  4C\|u_0\|^{2}_{\dot{B}^{-1+\frac{3}{p}}_{p,1}}\leq 1,
 		\end{equation}
 		then
 		\begin{equation}\label{d51}
 		\begin{aligned}
 		\int^{T}_{t_{0}}\|\nabla w\|^{4}_{L^{2}}dt&\leq \sup_{t\in[t_{0},T]}\|\nabla w\|^{2}_{L^{2}}\cdot\int^{T}_{t_{0}}\|\nabla w\|^{2}_{L^{2}}dt\\&
 		\leq 4C\|u_0\|^{4}_{\dot{B}^{-1+\frac{3}{p}}_{p,1}}\leq \|u_0\|^{2}_{\dot{B}^{-1+\frac{3}{p}}_{p,1}}.
 		\end{aligned}
 		\end{equation}
 		Hence, we arrive at
 		\begin{equation}\label{d52}
 		\begin{aligned}
 		\int^{T}_{t_{0}}\int_{\mathbb{R}^{3}}\rho| w_{t}|^{2}dxdt+\sup_{t\in[t_{0},T]}\int_{\mathbb{R}^{3}}|\nabla w|^{2} dx\leq 2C\|u_0\|^{2}_{\dot{B}^{-1+\frac{3}{p}}_{p,1}}.
 		\end{aligned}
 		\end{equation}
 		Choose some small positive constant $	\varepsilon_{1}=\min\{\sqrt{\frac{1}{4C}}, \sqrt{\frac{\ln2}{C}}\}$, which is clear that (\ref{d52}) holds, provided (\ref{d50}) holds.
 		\vskip 0.5cm
 		\noindent\textbf{Step 3.} The estimates of $\| v_t\|_{{L}^{2}([t_0,T];L^2)}$ and $\|\nabla v\|_{L^{\infty}([t_0,T];L^2)}$.
 		\vskip 0.5cm
 		Multiplying the classical Navier-Stokes equation $(\ref{a11})_1$ by $v_{t}$ and integrating over $\mathbb{R}^{3}$ yield
 		\begin{equation}\label{u1}
 		\begin{aligned}
 		\int_{\mathbb{R}^{3}}| v_{t}|^{2}dx+\frac{1}{2}\frac{d}{dt}\int_{\mathbb{R}^{3}}|\nabla v|^{2}dx
 		&=-\int_{\mathbb{R}^3} v\cdot\nabla v\cdot v_t dx\\
 		&\leq \frac{1}{2}\|v_t\|^2_{L^2}+C\|\nabla v\|^2_{L^2}\|v\|^2_{L^\infty},
 		\end{aligned}
 		\end{equation}
 		from which, we arrive at
 		\begin{equation}
 		\int^{T}_{t_{0}}\int_{\mathbb{R}^{3}}|v_{t}|^{2}dxdt+\sup_{t\in[t_{0},T]}\int_{\mathbb{R}^{3}}|\nabla v|^{2} dx\leq C\|u(t_0)\|^{2}_{\dot{H}^1}.
 		\end{equation}
 		By virtue of $u\stackrel{\text { def }}{=}v+w$ and (\ref{d46}), we completes the proof of Lemma.
 	\end{proof}
 	
 	\subsection{The \emph{a priori} time-decay estimates to (\ref{a1})}  
 	
 	\par In this subsection we note that the condition $u_0\in\dot{H}^{-2\delta}$ with $\delta\in ]1/2,3/4[$ here is necessary to give the equation a decay, since there is no Poincar\'{e}'s~inequality similar to that of the bounded domain in \cite{2015HW}.
 	
 	\begin{Lemma}\label{Lemma-5.4}
 		Suppose $(\rho,u,\pi)$ is the unique local strong solution to $(\ref{a1})$ satisfying $(\ref{a7}).$ Then under the assumptions of Proposition \ref{Proposition-5.5}, we have 
 		\begin{equation}\label{d53}
 		\begin{aligned}
 		\int^{T}_{t_{0}}\int_{\mathbb{R}^{3}}t|u_{t}|^{2}dxdt+\sup_{t\in[t_{0},T]}\int_{\mathbb{R}^{3}}t|\nabla u|^{2} dx\leq  \exp\{C\mathcal{H}_{0}\},
 		\end{aligned}
 		\end{equation}
 		where $\mathcal{H}_{0}$ is given by (\ref{h}) and $C$ depends on $\underline{\mu}$, $\overline{\mu}$, $M$, $\|\rho_0-1\|_{B^{\frac{3}{q}}_{q,1}}$ and $\|u_0\|_{\dot{H}^{-2\delta}}$.
 	\end{Lemma}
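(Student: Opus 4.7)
The plan is to derive a weighted $\dot H^1$ estimate by testing the momentum equation in (\ref{a1}) with $u_t$ and then multiplying the resulting identity by $t$. Concretely, since $\operatorname{div} u_t = 0$ the pressure contribution drops out, and the viscous-work term $2\int\mu(\rho)\mathbb{D}u:\mathbb{D}u_t\,dx$ can be rewritten with help of the transport equation $\partial_t\mu(\rho)+u\cdot\nabla\mu(\rho)=0$, yielding
\[
\int_{\mathbb{R}^3}\rho|u_t|^2\,dx+\frac{d}{dt}\int_{\mathbb{R}^3}\mu(\rho)|\mathbb{D}u|^2\,dx=-\int_{\mathbb{R}^3}\rho\,u\cdot\nabla u\cdot u_t\,dx+\int_{\mathbb{R}^3}u\cdot\nabla\mu(\rho)\,|\mathbb{D}u|^2\,dx.
\]
This is the analogue for $u$ of the $w$-identity used in (\ref{d47})--(\ref{dd50}). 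Multiplying by $t$ and using $\frac{d}{dt}(tY)=Y+tY'$ converts it into
\[
\tfrac{t}{2}\int\rho|u_t|^2\,dx+\frac{d}{dt}\!\left(t\!\int\mu(\rho)|\mathbb{D}u|^2\,dx\right)\leq\int\mu(\rho)|\mathbb{D}u|^2\,dx+Ct\|\nabla u\|^2_{L^2}+Ct\|\nabla u\|^6_{L^2},
\]
once the two nonlinear RHS terms of the identity are absorbed as in the next step.

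To control the nonlinearities I would use Hölder and Gagliardo--Nirenberg in three dimensions. The convective term is bounded by $C\|\sqrt\rho u_t\|_{L^2}\|u\|_{L^\infty}\|\nabla u\|_{L^2}$, and the variable-viscosity term by $C\|u\|_{L^6}\|\nabla\mu(\rho)\|_{L^3}\|\nabla u\|_{L^6}\|\nabla u\|_{L^3}$. Applying $\|u\|_{L^\infty}\lesssim\|\nabla u\|^{1/2}_{L^2}\|\nabla^2 u\|^{1/2}_{L^2}$, $\|\nabla u\|_{L^3}\lesssim\|\nabla u\|^{1/2}_{L^2}\|\nabla^2 u\|^{1/2}_{L^2}$, $\|\nabla u\|_{L^6}\lesssim\|\nabla^2 u\|_{L^2}$ and Young's inequality, then invoking the density-dependent Stokes estimate of Corollary \ref{Corollary-5.1} (valid under the standing assumption (\ref{d39})) to replace $\|\nabla^2 u\|^2_{L^2}$ by $C(\|\sqrt\rho u_t\|^2_{L^2}+\|\nabla u\|^2_{L^2}+\|\nabla u\|^6_{L^2})$, both RHS terms get absorbed into $\tfrac{1}{2}\|\sqrt\rho u_t\|^2_{L^2}+C(\|\nabla u\|^2_{L^2}+\|\nabla u\|^6_{L^2})$, which justifies the weighted inequality displayed above.

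Integrating on $[t_0,s]\subset[t_0,T]$ and estimating each piece uniformly in $T$: the unweighted integral $\int_{t_0}^T\|\nabla u\|^2_{L^2}\,dt$ is bounded by Lemma \ref{Lemma-u1} and is $\leq C\|u(t_0)\|^2_{L^2}\lesssim\mathcal{H}_0$; the weighted integral $\int_{t_0}^T t\|\nabla u\|^2_{L^2}\,dt$ is handled by writing $t\leq t_0^{1-2\delta_-}\,t^{2\delta_-}$ (legitimate because $\delta>1/2$ so that $2\delta_->1$) and invoking the weighted decay (\ref{d44}) of Corollary \ref{c-4.2}, giving a bound of order $\mathcal{H}_0$; and $\int_{t_0}^T t\|\nabla u\|^6_{L^2}\,dt\leq(\sup_{[t_0,T]}\|\nabla u\|^2_{L^2})^2\int_{t_0}^T t\|\nabla u\|^2_{L^2}\,dt\lesssim\mathcal{H}_0^3$ by Lemma \ref{Lemma-5.3}. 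Together with the initial contribution $t_0\int\mu(\rho)|\mathbb{D}u(t_0)|^2\,dx\lesssim\|u(t_0)\|^2_{\dot H^1}\lesssim\mathcal{H}_0$ and Korn's inequality $\|\nabla u\|^2_{L^2}\lesssim\|\mathbb{D}u\|^2_{L^2}$ (valid since $\operatorname{div}u=0$ and $\underline\mu>0$), this yields a bound of polynomial order $\mathcal{H}_0^3$ for both the time integral of $t\|u_t\|^2_{L^2}$ and $\sup_s s\|\nabla u(s)\|^2_{L^2}$, which is dominated by $\exp(C\mathcal{H}_0)$.

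The main obstacle is the genuinely density-dependent-viscosity term $\int u\cdot\nabla\mu(\rho)|\mathbb{D}u|^2\,dx$, which has no counterpart in the constant-viscosity theory; its absorption requires both the $L^3$ bound on $\nabla\mu(\rho)$ from (\ref{d39}) and the $\dot W^{2,2}$ estimate of Corollary \ref{Corollary-5.1}, which together allow $\|\nabla^2 u\|_{L^2}$ to be traded against $\|\sqrt\rho u_t\|_{L^2}$ and lower-order energy quantities. A secondary subtlety is the role of $\delta>1/2$: only when $2\delta_->1$ can the time weight $t$ be bounded by a constant times $t^{2\delta_-}$ on $[t_0,\infty[$, which is what makes the weighted decay integral $\int_{t_0}^T t\|\nabla u\|^2_{L^2}\,dt$ uniformly bounded in $T$ via (\ref{d44}); this is precisely why the hypothesis $u_0\in\dot H^{-2\delta}$ with $\delta>1/2$ enters the final constant.
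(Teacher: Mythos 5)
Your proposal is correct and follows the same core strategy as the paper: test $(\ref{a1})_2$ with $u_t$, rewrite the viscous work term via $\partial_t\mu(\rho)+u\cdot\nabla\mu(\rho)=0$, estimate the two RHS integrals by H\"older/Gagliardo--Nirenberg, and crucially trade $\|\nabla^2 u\|_{L^2}$ for $\|\sqrt\rho u_t\|_{L^2}$ and lower-order energy via the density-dependent Stokes estimate of Corollary \ref{Corollary-5.1} before multiplying by $t$ and integrating. This produces the same differential inequality (the paper's \eqref{dd53}), and the pieces you flag (the $L^3$ control of $\nabla\mu(\rho)$ from \eqref{d39} and the role of $\delta_->\tfrac12$) are exactly what the paper leans on.

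The one genuine divergence is the closing step. The paper does not absorb the intermediate powers of $\|\nabla u\|_{L^2}$: after multiplying by $t$ it keeps $\int_{t_0}^T t\|\nabla u\|^3_{L^2}$, $\int_{t_0}^T t\|\nabla u\|^4_{L^2}$, $\int_{t_0}^T t\|\nabla u\|^6_{L^2}$, writes each as $(t\|\nabla u\|^2_{L^2})\,\|\nabla u\|^k_{L^2}$ and applies Gronwall, so the burden shifts to bounding $\int_{t_0}^T\|\nabla u\|_{L^2}\,dt$ by Cauchy--Schwarz against $\int t^{-2\delta_-}dt<\infty$ (equation \eqref{d55}); this is where $\delta_->\tfrac12$ enters in the paper and where the $\exp\{C\mathcal H_0\}$ structure comes from. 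You instead absorb $\|\nabla u\|^3$ and $\|\nabla u\|^4$ into $\|\nabla u\|^2+\|\nabla u\|^6$ by Young, and then bound $\int_{t_0}^T t\|\nabla u\|^2_{L^2}\,dt$ directly by writing $t\le t_0^{1-2\delta_-}t^{2\delta_-}$ and invoking \eqref{d44}, plus $\int_{t_0}^T t\|\nabla u\|^6_{L^2}\,dt\le(\sup\|\nabla u\|^2_{L^2})^2\int_{t_0}^T t\|\nabla u\|^2_{L^2}\,dt$ via Lemma \ref{Lemma-5.3}. This avoids Gronwall entirely and gives a polynomial ($\lesssim\mathcal H_0^3$) rather than exponential bound, which is of course dominated by $\exp\{C\mathcal H_0\}$. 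Both routes use $\delta_->\tfrac12$ in the same essential place --- to tame the time weight near infinity via Corollary \ref{c-4.2} --- so neither is more general; yours is marginally more elementary and sharper in the $\mathcal H_0$-dependence, at the cost of a slightly more delicate Young-splitting of the intermediate powers, which you glossed over but which is harmless ($a^3\le\tfrac34 a^2+\tfrac14 a^6$, $a^4\le\tfrac12 a^2+\tfrac12 a^6$).
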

 	\begin{proof} 
 		Multiplying the momentum equations by $u_{t}$ and integrating over $\mathbb{R}^{3}$ yield
 		\begin{equation}\label{dd52}
 		\begin{aligned}
 		&\int_{\mathbb{R}^{3}}\rho| u_{t}|^{2}dx+\frac{d}{dt}\int_{\mathbb{R}^{3}}\mu(\rho)\mathbb{D}u:\mathbb{D}u dx\\
 		\leq&|\int_{\mathbb{R}^{3}}\rho u\cdot\nabla u\cdot u_{t}dx|+\int_{\mathbb{R}^{3}}|\nabla\mu(\rho)|\cdot|u|\cdot|\nabla u|^{2} dx.
 		\end{aligned}
 		\end{equation}
 		Along the same way as (\ref{d49}) and using Corollary \ref{Corollary-5.1} to estimate each term on the right hand of (\ref{dd52}), we obtain
 		\begin{equation}\label{dd53}
 		\begin{aligned}
 		&\int_{\mathbb{R}^{3}}\rho|u_{t}|^{2}dx+\frac{d}{dt}\int_{\mathbb{R}^{3}}\mu(\rho)\mathbb{D}u:\mathbb{D}u dx\\ \leq&\frac{7}{8}\|\sqrt{\rho}u_{t}\|^{2}_{L^{2}}+C\|\nabla u\|^{3}_{L^{2}}+C\|\nabla u\|^{4}_{L^{2}}+C\|\nabla u\|^{6}_{L^{2}},
 		\end{aligned}
 		\end{equation}
 		Multiplying (\ref{dd53}) by $t,$ as shown in the last proof, one has 
 		\begin{equation}\label{d54}
 		\begin{aligned}
 		&\int^{T}_{t_{0}}t\|u_{t}\|^{2}_{L^{2}}dt+\sup_{t\in[t_{0},T]}t\|\nabla u\|^{2}_{L^{2}}\leq C\|\nabla u(t_0)\|^2_{L^2}+C\int^{T}_{t_{0}}\|\nabla u\|^{2}_{L^{2}}dt
 		\\ &\quad+ C\int^{T}_{t_{0}}t\|\nabla u\|^{3}_{L^2}dt
 		+C\int^{T}_{t_{0}}t\|\nabla u\|^{4}_{L^{2}}dt+C\int^{T}_{t_{0}}t\|\nabla u\|^{6}_{L^{2}}dt.
 		\end{aligned}
 		\end{equation}
 		Applying Gronwall's inequality,
 		\begin{equation}\label{d56}
 		\begin{aligned}
 		&\int^{T}_{t_{0}}t\|u_{t}\|^{2}_{L^{2}}dt+\sup_{t\in[t_{0},T]}t\|\nabla u\|^{2}_{L^{2}}\\
 		\leq&  C\| u(t_{0})\|^{2}_{H^{1}}\exp\left\{\int^{T}_{t_{0}}\|\nabla u\|_{L^2}dt+\int^{T}_{t_{0}}\|\nabla u\|^{2}_{L^2}dt\int^{T}_{t_{0}}\|\nabla u\|^{4}_{L^2}dt\right\}.
 		\end{aligned}
 		\end{equation}
 	Here we have used the Corollary \ref{c-4.2}, so for $\delta_{-}>\frac{1}{2},$ we get that
 		\begin{equation}\label{d55}
 		\int^{T}_{t_{0}}\|\nabla u\|_{L^2}dt\leq\left(\int^{T}_{t_{0}}\|t^{\delta_{-}}\nabla u\|^{2}_{L^2}dt\right)^{\frac{1}{2}}\left(\int^{T}_{t_{0}}t^{-2\delta_{-}}dt\right)^{\frac{1}{2}}  \leq C\mathcal{H}_{0}.
 		\end{equation}   
 		This leads to (\ref{d53}).
 	\end{proof}
 	\begin{Lemma}\label{Lemma-5.5}
 		Suppose $(\rho,u,\pi)$ is the uniquely local  solution to $(\ref{a1})$ satisfying $(\ref{a7}).$ Then under the assumptions of Proposition \ref{Proposition-5.5}, we have 
 		\begin{equation}\label{d57}
 		\begin{aligned}
 		\sup_{t\in[t_{0},T]}t\|u_{t}\|^{2}_{L^{2}}+\int^{T}_{t_{0}}t\|\nabla u_{t}\|^{2}_{L^{2}}dt\leq \|u (t_0)\|^2_{H^2}\exp\{C\mathcal{H}_{0}\},
 		\end{aligned}
 		\end{equation}
 		and
 		\begin{equation}\label{d58}
 		\begin{aligned}
 		\sup_{t\in[t_{0},T]}t^2\|u_{t}\|^{2}_{L^{2}}+\int^{T}_{t_{0}}t^{2}\|\nabla u_{t}\|^{2}_{L^{2}}dt\leq \|u (t_0)\|^2_{H^2}\exp\{C\mathcal{H}_{0}\},
 		\end{aligned}
 		\end{equation}
 		where $\mathcal{H}_{0}$ is given by (\ref{h}) and $C$ depends on $\underline{\mu}$, $\overline{\mu}$, $M$, $\|\rho_0-1\|_{B^{\frac{3}{q}}_{q,1}}$ and $\|u_0\|_{\dot{H}^{-2\delta}}$.
 	\end{Lemma}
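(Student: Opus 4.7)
The plan is to time-differentiate the momentum equation $(\ref{a1})_{2}$ and test against $u_t$, which is the standard route to second-order-in-time energy estimates for this kind of system. Using the continuity equation $\rho_t=-\operatorname{div}(\rho u)$ and the transport identity $\partial_{t}\mu(\rho)=-u\cdot\nabla\mu(\rho)$, one obtains after integration by parts the identity
\begin{equation*}
\tfrac{1}{2}\tfrac{d}{dt}\|\sqrt{\rho}\,u_t\|_{L^2}^2+2\int_{\mathbb{R}^3}\mu(\rho)|\mathbb{D}u_t|^2\,dx=\mathcal{R},
\end{equation*}
where $\mathcal{R}$ collects the lower-order terms, namely $\int\rho(u\cdot\nabla u_t)\cdot u_t$, $\int\rho(u_t\cdot\nabla u)\cdot u_t$, $\int\rho_t(u\cdot\nabla u)\cdot u_t$, and the viscous commutator $\int(u\cdot\nabla\mu(\rho))\mathbb{D}u:\mathbb{D}u_t$. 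The pressure term vanishes since $\operatorname{div}u_t=0$.

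The first weighted inequality is obtained by multiplying through by $t$ and writing $\frac{d}{dt}(t\|\sqrt{\rho}u_t\|_{L^2}^2)=\|\sqrt{\rho}u_t\|_{L^2}^2+t\frac{d}{dt}\|\sqrt{\rho}u_t\|_{L^2}^2$. The free contribution $\int_{t_0}^{T}\|\sqrt{\rho}u_t\|_{L^2}^{2}\,dt$ is already controlled by Lemma \ref{Lemma-5.3}, and the boundary term $t_0\|\sqrt{\rho}u_t(t_0)\|_{L^2}^2$ is dominated by $\|u(t_0)\|_{H^2}^{2}$ via the momentum equation at $t=t_0$ (together with Corollary \ref{Corollary-5.1}, which converts $\nabla\pi$ and $\nabla^{2}u$ into $H^{2}$ data). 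The various pieces of $2t\mathcal{R}$ are estimated with Gagliardo--Nirenberg and H\"older: for instance $|\int\rho(u_t\cdot\nabla u)\cdot u_t|\lesssim\|u_t\|_{L^{3}}\|\nabla u\|_{L^{2}}\|u_t\|_{L^{6}}\lesssim\|\sqrt{\rho}u_t\|_{L^2}^{1/2}\|\nabla u_t\|_{L^2}^{3/2}\|\nabla u\|_{L^{2}}$, absorbed into $\underline{\mu}t\|\nabla u_t\|_{L^2}^{2}$ by Young's inequality, and the commutator term is treated via $\|\nabla\mu(\rho)\|_{L^{r}\cap L^{3}}\le 4M$ together with $\|\nabla^{2}u\|_{L^{2}}\lesssim\|\rho u_t\|_{L^{2}}+\|\nabla u\|_{L^{2}}+\|\nabla u\|_{L^{2}}^{3}$ from Corollary \ref{Corollary-5.1}. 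Gronwall's lemma with the $L^{1}$-in-$t$ integrability of the prefactors (obtained from Corollary \ref{c-4.2} and Lemma \ref{Lemma-5.4}, exactly as in the $\int_{t_0}^{T}\|\nabla u\|_{L^{2}}\,dt\le C\mathcal{H}_{0}$ estimate in (\ref{d55})) produces (\ref{d57}). Estimate (\ref{d58}) follows by repeating the argument with weight $t^{2}$, using the relation $\frac{d}{dt}(t^{2}\|\sqrt{\rho}u_t\|_{L^2}^{2})=2t\|\sqrt{\rho}u_t\|_{L^2}^{2}+t^{2}\frac{d}{dt}\|\sqrt{\rho}u_t\|_{L^2}^{2}$, where the additional integral $\int_{t_0}^{T}t\|\sqrt{\rho}u_t\|_{L^2}^{2}\,dt$ is precisely what (\ref{d57}) just provided.

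The main obstacle is the variable viscosity commutator $\int(u\cdot\nabla\mu(\rho))\mathbb{D}u:\mathbb{D}u_t$, which cannot be closed on the bare energy level. One is forced to trade a $\nabla^{2}u$ into an $L^{\infty}$ norm of $u$ plus $\|\nabla\mu(\rho)\|_{L^{r}\cap L^{3}}$, and then invoke Corollary \ref{Corollary-5.1} to exchange $\|\nabla^{2}u\|_{L^{2}}$ against $\|\rho u_t\|_{L^{2}}+\|\nabla u\|_{L^{2}}+\|\nabla u\|_{L^{2}}^{3}$; the resulting sixth-order-in-$\|\nabla u\|_{L^2}$ Gronwall factor is exactly controllable thanks to the integrability $\int_{t_0}^{T}\|\nabla u\|_{L^{2}}^{k}\,dt<\infty$ that already appears in the proof of Lemma \ref{Lemma-5.4}. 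Once this is handled, the remaining algebra is bookkeeping, and the $t^{2}$-weighted bound follows from the $t$-weighted one by a second, completely parallel application of Gronwall's inequality.
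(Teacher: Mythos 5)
Your proposal follows the paper's proof essentially verbatim: differentiate $(\ref{a1})_{2}$ in $t$, test against $u_t$, use the mass equation for $\rho_t$, reduce $\|\nabla^2 u\|_{L^2}$ via Corollary \ref{Corollary-5.1}, control the boundary term $\|u_t(t_0)\|_{L^2}$ through the momentum equation and $\|u(t_0)\|_{H^2}$, then apply Gronwall first with weight $t$ and then with weight $t^2$. This is exactly the route the paper takes.

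Two small inaccuracies that do not affect the overall strategy. First, the free term $\int_{t_0}^{T}t\|\sqrt{\rho}u_t\|_{L^2}^2\,dt$ produced by the $t^2$-weight is \emph{not} "precisely what (\ref{d57}) just provided": (\ref{d57}) gives $\sup_t t\|u_t\|_{L^2}^2$ and $\int t\|\nabla u_t\|_{L^2}^2\,dt$, and $\sup_t t\|u_t\|_{L^2}^2$ only yields $\int_{t_0}^{T}t\|u_t\|_{L^2}^2\,dt\lesssim T$, which is useless as $T\to\infty$. The correct source is Lemma \ref{Lemma-5.4}, whose (\ref{d53}) already bounds $\int_{t_0}^{T}t\|u_t\|_{L^2}^2\,dt$ uniformly in $T$. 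Second, the $t^2$ step is not "completely parallel" to the $t$ step: a genuinely new term $\int_{t_0}^{T}t^2\|\nabla u\|_{L^2}^4\,dt$ (not present in the $t$-weighted version) must be absorbed, and this is precisely where the decay hypothesis $\delta>1/2$ enters through Corollary \ref{c-4.2}, via $\int t^2\|\nabla u\|_{L^2}^4\,dt\lesssim\sup_t t\|\nabla u\|_{L^2}^2\cdot\int t^{1-2\delta_-}\|t^{\delta_-}\nabla u\|_{L^2}^2\,dt$. Your sketch would be complete once these two points are patched.
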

 	\begin{proof}
 		Applying $t$-derivative to the momentum equations, we easily know that
 		\begin{equation}\label{d59}
 		\begin{aligned}
 		\rho\partial_{tt}u+\rho u\cdot\nabla u_{t}-\operatorname{div}(2\mu(\rho)\mathbb{D}u_{t})+\nabla\pi_{t}=-\rho_t u_t- \left(\rho u\right)_t\cdot\nabla u+
 		\operatorname{div}(2\partial_{t}\mu(\rho)\mathbb{D}u).
 		\end{aligned}
 		\end{equation}
 		Multiplying (\ref{d59}) by $u_{t}$ and integrating over $\mathbb{R}^{3}$, we get after integration by parts that
 		\begin{equation}\label{d60}
 		\begin{aligned}
 		&\frac{1}{2}\frac{d}{dt}\int_{\mathbb{R}^{3}}\rho|u_{t}|^{2}dx+2\int_{\mathbb{R}^{3}}\mu(\rho)\mathbb{D}u_{t}:\mathbb{D}u_{t}dx\\=&-\int_{\mathbb{R}^{3}}\rho_t u_t|u_{t}dx-\int_{\mathbb{R}^{3}} \left(\rho u\right)_t\cdot\nabla u|u_{t}dx
 		-2\int_{\mathbb{R}^{3}}\partial_{t}\mu(\rho)\mathbb{D}u\cdot\nabla u_{t}dx\\\stackrel{\text { def }}{=}&\sum_{i=1}^{3} J_{i}.
 		\end{aligned}
 		\end{equation} 
 		\par Now, we will use the Gagliardo-Nirenberg inequality to estimate each term on the right hand of (\ref{d60}). First, with the help of the mass equation, one has
 		\begin{equation}\label{j1}
 		\begin{aligned}
 		J_1=-2\int_{\mathbb{R}^{3}}\rho u\cdot\nabla u_t\cdot u_{t}dx&\leq C\|u\|_{L^6}\|\nabla u_t\|_{L^2}\|u_t\|_{L^3}\\
 		&\leq C\|\nabla u\|_{L^2}\|u_t\|_{L^2}^{\frac{1}{2}}\|\nabla u_t\|^{\frac{3}{2}}_{L^2}\\
 		&\leq \frac{1}{8} \underline{\mu} \|\nabla u_t\|^{2}_{L^2}+C\|u_t\|^{2}_{L^2}\|\nabla u\|^{4}_{L^2}.
 		\end{aligned}
 		\end{equation} 
 		Taking into account the mass equation again, we arrive at
 		$$\begin{aligned}
 		J_2&=-\int_{\mathbb{R}^{3}}\rho_t u \cdot\nabla u|u_{t}dx-\int_{\mathbb{R}^{3}}\rho u_t\cdot\nabla u|u_{t}dx\\
 		& \leq  \int_{\mathbb{R}^{3}} \rho|u| \cdot|\nabla u|^2 \cdot|u_t| dx+C  \int_{\mathbb{R}^{3}} \rho|u|^2 \cdot|\nabla^2 u| \cdot|u_t| d x \\
 		& \quad+ \int_{\mathbb{R}^{3}}\rho|u|^2 \cdot|\nabla u| \cdot|\nabla u_t| d x+ \int_{\mathbb{R}^{3}} \rho|u_t|^2 \cdot|\nabla u| d x.
 		\end{aligned}$$
 		Hence, it follows from Sobolev embedding inequality and Gagliardo–Nirenberg inequality, that
 		$$
 		\begin{aligned}
 		\int_{\mathbb{R}^{3}} \rho|u| \cdot|\nabla u|^2 \cdot|u_t| d x & \leq C\|u_t\|_{L^6} \|u\|_{L^6} \|\nabla u\|_{L^3}^2 \\
 		& \leq C \|\nabla u_t\|_{L^2} \|\nabla u\|_{L^2}^2 \|\nabla^2 u\|_{L^2} \\
 		& \leq \frac{1}{8} \underline{\mu}\|\nabla u_t\|_{L^2}^2+C\|\nabla u\|_{L^2}^4\|\nabla^2 u\|_{L^2}^2,
 		\end{aligned}
 		$$
 		and, 
 		$$
 		\begin{aligned}
 		\int_{\mathbb{R}^{3}} \rho|u|^2 \cdot|\nabla^2 u| \cdot|u_t| d x & \leq C \|u_t\|_{L^6} \|\nabla^2 u\|_{L^2} \|u\|_{L^6}^2 \\
 		& \leq \frac{1}{8} \underline{\mu}\|\nabla u_t\|_{L^2}^2+C\|\nabla^2 u\|^2_{L^2} \|\nabla u\|_{L^2}^4.
 		\end{aligned}
 		$$
 		Similarly, it holds that
 		$$
 		\begin{aligned}
 		\int_{\mathbb{R}^{3}}\rho|u|^2 \cdot|\nabla u| \cdot|\nabla u_t| d x & \leq C \|\nabla u_t\|_{L^2} \|\nabla u\|_{L^6} \|u\|_{L^6}^2 \\
 		& \leq \frac{1}{8} \underline{\mu} \|\nabla u_t\|_{L^2}^2+ C\|\nabla^2 u\|_{L^2}^2 \|\nabla u\|_{L^2}^4,
 		\end{aligned}
 		$$
 		and,
 		$$\begin{aligned}
 		\int_{\mathbb{R}^{3}} \rho|u_t|^2 \cdot|\nabla u| d x&\leq C\|u_t\|^2_{L^4} \|\nabla u\|_{L^2} \\
 		&\leq  C\|u_t\|^{\frac{1}{2}}_{L^2} \|\nabla u_t\|^{\frac{3}{2}}_{L^2}\|\nabla u\|_{L^2}\\
 		&\leq \frac{1}{8} \underline{\mu} \|\nabla u_t\|_{L^2}^2+C\|u_t\|^2_{L^2}\|\nabla u\|^4_{L^2}.
 		\end{aligned}$$
 		By means of Corollary \ref{Corollary-5.1}, we deduce that
 		\begin{align}\label{j2}
 		\begin{aligned}
 		|J_2|&\leq \frac{1}{2} \underline{\mu}\|\nabla u_t\|_{L^2}^2+C\|\nabla u\|_{L^2}^4\|\nabla^2 u\|_{L^2}^2+C\|u_t\|^2_{L^2}\|\nabla u\|^4_{L^2}\\
 		&\leq \frac{1}{2} \underline{\mu}\|\nabla u_t\|_{L^2}^2 +C\|u_t\|^2_{L^2}\|\nabla u\|^4_{L^2}+C\|\nabla u\|_{L^2}^6+C\|\nabla u\|_{L^2}^{10}.
 		\end{aligned}
 		\end{align}  
 		Along the same way, we obtain
 		$$
 		\begin{aligned}
 		\left|J_3\right| &\leq C \int|u| \cdot|\nabla \mu(\rho)|\cdot |\mathbb{D}u| \cdot\left|\nabla u_t\right| d x \\
 		& \leq C\|\nabla \mu(\rho)\|_{L^3}\|u\|_{L^{\infty}}\|\nabla u\|_{L^{6}}\left\|\nabla u_t\right\|_{L^2} \\
 		& \leq 4C\|\nabla \mu(\rho_0)\|_{L^3}\|u\|_{L^6}^{1 / 2}\|\nabla u\|_{L^6}^{1 / 2}\left\|\nabla^2 u\right\|_{L^2}\left\|\nabla u_t\right\|_{L^2} \\
 		& \leq \frac{1}{8} \underline{\mu}\left\|\nabla u_t\right\|_{L^2}^2+C(M)\|\nabla u\|_{L^2}\left\|\nabla^2 u\right\|_{L^2}^3,
 		\end{aligned}
 		$$
 		thus, it follows from Corollary \ref{Corollary-5.1} that
 		\begin{equation}\label{j3}
 		\begin{aligned}
 		|J_3|\leq\frac{1}{8} \underline{\mu}\left\|\nabla u_t\right\|_{L^2}^2+C\|\nabla u\|_{L^2}\left\|u_t\right\|_{L^2}^3+C\|\nabla u\|_{L^2}^4+C\|\nabla u\|^{10}_{L^2}.
 		\end{aligned}
 		\end{equation}
 	 Substituting (\ref{j1})-(\ref{j3}) into (\ref{d60}) gives
 		\begin{equation}\label{d68}
 		\begin{aligned}
 		\frac{d}{dt}\int_{\mathbb{R}^{3}}\rho|u_{t}|^{2}dx+&\|\nabla u_{t}\|^{2}_{L^{2}} \leq C\|u_t\|^2_{L^2}\|\nabla u\|^4_{L^2}+C\|u_t\|^4_{L^2}\\&+C\|\nabla u\|_{L^2}^4+C\|\nabla u\|_{L^2}^6+C\|\nabla u\|^{10}_{L^2}.
 		\end{aligned}
 		\end{equation}
 		Multiplying (\ref{d68}) by $t$ gives that
 		$$
 		\begin{aligned}
 		\frac{d}{dt}\|\sqrt{t}&\sqrt{\rho}u_{t}\|^{2}_{L^{2}}+\|\sqrt{t}\nabla u_{t}\|^{2}_{L^{2}}\lesssim \|u_{t}\|^{2}_{L^{2}}+\|\sqrt{t}u_{t}\|^{2}_{L^{2}}\|\nabla u\|^{4}_{L^{2}}\\&+ \|\sqrt{t}u_{t}\|^{2}_{L^{2}}\|u_{t}\|^{2}_{L^{2}}+\|\sqrt{t}\nabla u\|^{2}_{L^{2}}\{\|\nabla u\|^{2}_{L^{2}}+\|\nabla u\|^{4}_{L^{2}}+\|\nabla u\|^{8}_{L^{2}}\}.
 		\end{aligned}
 		$$
 		Integrating with respect to time on $[t_{0},T]$ and using Gronwall's inequality, we obtain
 		\begin{equation}\label{d69}
 		\begin{aligned}
 		\sup_{t\in[t_{0},T]}t\|&u_{t}\|^{2}_{L^{2}}+\int^{T}_{t_{0}}t\|\nabla u_{t}\|^{2}_{L^{2}}dt\lesssim\Big\{\|u_{t}(t_0)\|^{2}_{L^{2}}+\int^{T}_{t_{0}}\|u_t\|^{2}_{L^{2}}dt\\&+\int^{T}_{t_{0}}\|\sqrt{t}\nabla u\|^{2}_{L^{2}}(\|\nabla u\|^{2}_{L^{2}}+\|\nabla u\|^{4}_{L^{2}}+\|\nabla u\|^{8}_{L^{2}})dt\Big\}\\&\quad\times\exp\left\{\int^{T}_{t_{0}}\|\nabla u\|^{4}_{L^{2}}dt+\int^{T}_{t_0}\|u_t\|^{2}_{L^{2}}dt\right\}.
 		\end{aligned}
 		\end{equation}
 		According to Lemma \ref{Lemma-5.3} and \ref{Lemma-5.4}, we derive
 		\begin{equation}\label{d70}
 		\begin{aligned}
 		&\int^{T}_{t_{0}}\|\sqrt{t}\nabla u\|^{2}_{L^{2}}\left(\|\nabla u\|^{2}_{L^{2}}+\|\nabla u\|^{4}_{L^{2}}+\|\nabla u\|^{8}_{L^{2}}\right)dt\\
 		\leq &\sup_{t\in[t_{0},T]}\|\sqrt{t}\nabla u\|^{2}_{L^{2}}\cdot\left(1+\sup_{t\in[t_{0},T]}\|\nabla u\|^{2}_{L^{2}}+\sup_{t\in[t_{0},T]}\|\nabla u\|^{6}_{L^{2}}\right)\\&\quad\times\int^{T}_{t_{0}}\|\nabla u\|^{2}_{L^{2}}dt\leq \exp\{C\mathcal{H}_{0}\}.
 		\end{aligned}
 		\end{equation}
 		Whereas taking $L^2$-norm of the $u_t$ at $t=t_0$ and using (\ref{d1}), it gives rise to
 		\begin{equation}\label{d71}
 		\begin{aligned}
 		\|u_t(t_0)\|_{L^2}
 		\lesssim &\|\rho u\cdot\nabla u (t_0)\|_{L^2}+\|\operatorname{div}(2\mu(\rho)\mathbb{D}u)(t_0)\|_{L^2}\\
 		\lesssim & \|u(t_0)\|_{L^6}\|\nabla u(t_0)\|_{L^3}+\|\nabla\mu(\rho)(t_0)\|_{L^3}\|\nabla u(t_0)\|_{L^{6}}+\|\Delta u(t_0)\|_{L^2}\\
 		\lesssim & \|\nabla u(t_0)\|^{\frac{3}{2}}_{L^2}\|\nabla^{2}u(t_0)\|^{\frac{1}{2}}_{L^2}+C(M)\|\nabla^2 u(t_0)\|_{L^2}\\
 		\lesssim & \|u (t_0)\|_{H^2}.
 		\end{aligned}
 		\end{equation}
 		Plugging (\ref{d70}) and (\ref{d71}) into (\ref{d69}), we have
 		\begin{equation}\label{d72}
 		\begin{aligned}
 		\sup_{t\in[t_{0},T]}t\|u_{t}\|^{2}_{L^{2}}+\int^{T}_{t_{0}}t\|\nabla u_{t}\|^{2}_{L^{2}}dt\leq\|u (t_0)\|^2_{H^2}\exp\{C\mathcal{H}_{0}\}.
 		\end{aligned}
 		\end{equation}
 		On the other hand, multiplying (\ref{d68}) by $t^2,$ one has
 		$$
 		\begin{aligned}
 		\frac{d}{dt}\|t\sqrt{\rho}u_{t}&\|^{2}_{L^{2}}+\|t\nabla u_{t}\|^{2}_{L^{2}}\lesssim \|\sqrt{t}u_{t}\|^{2}_{L^{2}}+ \|tu_{t}\|^{2}_{L^{2}}\|\nabla u\|^{4}_{L^{2}}\\&+\|tu_{t}\|^{2}_{L^{2}}\|u_t\|^{2}_{L^{2}}+\|\sqrt{t}\nabla u\|^{4}_{L^{2}}\{1+\|\nabla u\|^{2}_{L^{2}}+\|\nabla u\|^{6}_{L^{2}}\}.
 		\end{aligned}
 		$$
 		Owing to Corollary \ref{c-4.2}, we get for $\delta>\frac{1}{2},$ 
 		$$
 		\begin{aligned}
 		\int^{T}_{t_0}\|\sqrt{t}\nabla u\|^{4}_{L^{2}}dt\leq \sup_{t\in[t_0,T]}\|\sqrt{t}\nabla u\|^{2}_{L^{2}}\int^{T}_{t_0}t^{1-2\delta_{-}}\|t^{\delta_{-}}\nabla u\|^{2}_{L^{2}}dt\leq \exp\{C\mathcal{H}_{0}\}.
 		\end{aligned}$$ 
 		From which and Gronwall's inequality, we can deduce
 		\begin{equation}\label{d74}
 		\begin{aligned}
 		&\sup_{t\in[t_{0},T]}t^2\|u_{t}\|^{2}_{L^{2}}+\int^{T}_{t_{0}}t^2\|\nabla u_{t}\|^{2}_{L^{2}}dt\leq \|u (t_0)\|^2_{H^2} \exp\{C\mathcal{H}_{0}\}.
 		\end{aligned}
 		\end{equation}
 		This completes the proof of Lemma \ref{Lemma-5.5}.
 	\end{proof}
 	
 	\subsection{The $L^{1}([t_0,T];L^\infty(\mathbb{R}^{3}))$ estimate for $\nabla u$}
 	To close the $L^\infty([t_0,T];L^r(\mathbb{R}^{3}))$ estimate of $\nabla \mu(\rho)$ in this subsection, we need the $L^{1}([t_0,T];L^\infty(\mathbb{R}^{3})))$ estimate of $\nabla u$ to be small enough, i.e:
 	\begin{Lemma}\label{Lemma-5.6}
 		Let $n\in]2,\frac{6\delta_{-}}{1+\delta_{-}}[$ for $\delta\in]1/2,3/4[$ and $\alpha=\frac{(m-3)n}{3m-3n+nm}$. Assume that $(\rho,u,\pi)$ is the unique local strong solution to $(\ref{a1})$ satisfying $(\ref{a7}).$ Then under the assumptions of Proposition \ref{Proposition-5.5}, we have 
 		\begin{equation}\label{d75}
 		\begin{aligned}
 		\|\nabla u\|_{L^{1}([t_{0},T];L^\infty)}\leq \overline{C}\|u_{0}\|^{\frac{3\alpha(n-2)}{2n}}_{\dot{B}^{-1+\frac{3}{p}}_{p,1}}\exp\{C\mathcal{H}_{0}\}\quad{\rm and}\quad \overline{C}\overset{\text{def}}{=} 1+\|u(t_0)\|_{H^2},
 		\end{aligned}
 		\end{equation}
 		where $\mathcal{H}_{0}$ is given by (\ref{h}) and $C$ depends on $\underline{\mu}$, $\overline{\mu}$, $M$, $\|\rho_0-1\|_{B^{\frac{3}{q}}_{q,1}}$ and $\|u_0\|_{\dot{H}^{-2\delta}}$.
 	\end{Lemma}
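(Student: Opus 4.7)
The plan is to bound $\|\nabla u\|_{L^{1}([t_{0},T];L^\infty)}$ by pointwise Gagliardo--Nirenberg interpolation that routes through $\|\nabla u\|_{L^6}$, then to extract the smallness factor $\|u_{0}\|^{\frac{3\alpha(n-2)}{2n}}_{\dot{B}^{-1+\frac{3}{p}}_{p,1}}$ from the fact that $\|\nabla u\|_{L^{2}([t_{0},T];L^{6})}$ is already known to be small (as a consequence of the $v/w$ decomposition, cf.~\eqref{bbbb2}--\eqref{bbb2}). All remaining weighted quantities should be absorbed into $\overline{C}\exp\{C\mathcal{H}_{0}\}$ using the \emph{a priori} bounds of Lemmas~\ref{Lemma-u1}--\ref{Lemma-5.5} together with Corollary~\ref{c-4.2}.

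First, I would apply the scale-invariant Gagliardo--Nirenberg inequality
$$\|\nabla u\|_{L^\infty}\lesssim \|\nabla u\|_{L^n}^{\alpha}\,\|\nabla^{2} u\|_{L^m}^{1-\alpha},\quad \alpha=\frac{(m-3)n}{3m-3n+nm},$$
and then interpolate the $L^n$ factor between $L^2$ and $L^6$ in the Lebesgue scale,
$$\|\nabla u\|_{L^n}\lesssim \|\nabla u\|_{L^2}^{\frac{6-n}{2n}}\,\|\nabla u\|_{L^6}^{\frac{3(n-2)}{2n}},$$
so that
$$\|\nabla u\|_{L^\infty}\lesssim \|\nabla u\|_{L^2}^{\alpha\frac{6-n}{2n}}\,\|\nabla u\|_{L^6}^{\alpha\frac{3(n-2)}{2n}}\,\|\nabla^{2} u\|_{L^m}^{1-\alpha}.$$
The factor $\|\nabla^{2} u\|_{L^m}$ is then handled by Lemma~\ref{Lemma-5.1} applied to the representation $-\operatorname{div}(2\mu(\rho)\mathbb{D}u)+\nabla\pi=-\rho u_t-\rho u\cdot\nabla u$, which bounds it by $\|\rho u_{t}\|_{L^m}$ plus lower-order polynomial factors of $\|\nabla u\|_{L^2}$; the term $\|\rho u_{t}\|_{L^m}$ is subsequently interpolated between $L^2$ and $L^6$, with $\|u_{t}\|_{L^6}\lesssim \|\nabla u_{t}\|_{L^2}$.

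Next, this pointwise estimate is integrated in time by H\"older's inequality using the weights $t^{\delta_-}$, $\sqrt{t}$ and $t$, chosen so that every factor is matched to a weighted bound already in hand: $\|t^{\delta_-}\nabla u\|_{L^2_{t}L^2}\lesssim \mathcal{H}_{0}^{1/2}$ from Corollary~\ref{c-4.2}; $\|\sqrt{t}\,\nabla u\|_{L^\infty_{t}L^2}$ and $\|\sqrt{t}\,u_{t}\|_{L^2_{t}L^2}$ from Lemma~\ref{Lemma-5.4}; and $\|t\,\nabla u_{t}\|_{L^2_{t}L^2}$ together with $\|t\,u_t\|_{L^\infty_tL^2}$ from Lemma~\ref{Lemma-5.5}. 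The $L^6$ factor is kept outside and controlled by $\|\nabla u\|_{L^2([t_{0},T];L^{6})}\leq \|\nabla^2 w\|_{L^2_tL^2}+\|\nabla v\|_{L^2_tL^6}\le C\|u_{0}\|_{\dot{B}^{-1+\frac{3}{p}}_{p,1}}$ (which itself follows from the Stokes estimate of Corollary~\ref{c-2.1} for $w$, combined with Proposition~\ref{Proposition-5.1} and the embedding $\dot B^{\frac{3}{p}+\frac12}_{p,1}\hookrightarrow \dot B^{1}_{6,1}$ for $v$). It is precisely this $L^6$ piece that produces the advertised positive power $\frac{3\alpha(n-2)}{2n}$ of $\|u_{0}\|_{\dot{B}^{-1+\frac{3}{p}}_{p,1}}$.

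The main technical hurdle is the bookkeeping of time weights: the H\"older exponents must be chosen simultaneously so as to (i) match the available weighted $L^2_{t}$ norms on the right, (ii) leave a time integral of the form $\int_{t_{0}}^{T}t^{-\sigma}\,dt$ that is convergent \emph{uniformly in $T$}, and (iii) reproduce exactly the exponent $\frac{3\alpha(n-2)}{2n}$. The upper bound $n<\frac{6\delta_-}{1+\delta_-}$ is what makes the resulting temporal integral converge as $T\to\infty$, while the strict inequality $n>2$ ensures the extracted power of $\|u_{0}\|_{\dot{B}^{-1+\frac{3}{p}}_{p,1}}$ is strictly positive. Together these supply the smallness needed to close the bootstrap in Proposition~\ref{Proposition-5.5}, yielding the inequality~\eqref{d75}.
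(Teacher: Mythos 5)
Your proposal is correct and takes essentially the same route as the paper: Gagliardo--Nirenberg interpolation of $\|\nabla u\|_{L^\infty}$ between $\|\nabla u\|_{L^n}$ and $\|\nabla^2 u\|_{L^m}$; the Stokes regularity of Lemma~\ref{Lemma-5.1} with $F=-\rho u_t-\rho u\cdot\nabla u$ to control $\|\nabla^2 u\|_{L^1_t L^m}$; a further interpolation of the $L^n$ factor between $L^2$ and $L^6$ that extracts the small power of $\|u_0\|_{\dot B^{-1+3/p}_{p,1}}$ via $\|\nabla u\|_{L^2_tL^6}\le\|\nabla^2 w\|_{L^2_tL^2}+\|\nabla v\|_{L^2_tL^6}$; and the weighted time-decay bounds of Corollary~\ref{c-4.2} and Lemmas~\ref{Lemma-5.4}--\ref{Lemma-5.5} to absorb the rest, with $n<\frac{6\delta_-}{1+\delta_-}$ guaranteeing convergence of the time integral and $n>2$ guaranteeing a positive exponent. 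The only cosmetic difference is that you perform both interpolations at the pointwise level before invoking H\"older in $t$, whereas the paper separates them into two steps (bounding $\|\nabla^2 u\|_{L^1_tL^m}$ and $\|\nabla u\|_{L^1_tL^n}$ independently); the bookkeeping and final exponent are identical.
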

 	\begin{proof} Applying the Gagliardo-Nirenberg inequality, we have
 		\begin{equation}\label{d80}
 		\begin{aligned}
 		\|\nabla u\|_{L^{1}([t_{0},T];L^\infty)}&\lesssim\|\nabla u\|^{\alpha}_{L^{1}([t_{0},T];L^{n})}\|\nabla^{2}u\|^{1-\alpha}_{L^{1}([t_{0},T];L^{m})},
 		\end{aligned}
 		\end{equation}
 		with $$0=\frac{\alpha}{n}+(1-\alpha)(\frac{1}{m}-\frac{1}{3})\quad{\rm or}\quad\alpha=\frac{(m-3)n}{3m-3n+nm},$$
 		where $n$ is a positive constant which will be determined later and $m>3$. We now decompose the proof of Lemma \ref{Lemma-5.6} into the following steps:\\
 		
 		\noindent\textbf{Step 1.} The estimate of $\|\nabla^{2}u\|_{L^{1}([t_{0},T];L^{m})}$.
 		
 		By virtue of Lemma \ref{Lemma-5.1}, one has for $m\in(2,\min\{r,6\})$ with $r\in]3,+\infty[$,
 		\begin{equation}\label{d76}
 		\|\nabla^2 u\|_{L^m}\lesssim \|\nabla u\|_{L^2}+\|F\|_{L^m}+\|(-\Delta)^{-1}\operatorname{div}F\|_{L^2}
 		\end{equation}
 		with $F=-\rho \partial_t u-\rho u\cdot\nabla u.$ Thus, for any $\eta>0$, we have
 		$$\begin{aligned}
 		\|F\|_{L^m}\lesssim &\|\rho u_t\|_{L^m}+\|u\|_{L^6}\|\nabla u\|_{L^{\frac{6m}{6-m}}}\\
 		\lesssim& \|u_t\|^{\frac{6-m}{2m}}_{L^2}\|\nabla u_t\|^{\frac{3m-6}{2m}}_{L^2}+\|\nabla u\|^{\frac{6(m-1)}{5m-6}}_{L^2}\|\nabla^2 u\|^{\frac{4m-6}{5m-6}}_{L^m}\\
 		\lesssim & \eta\|\nabla^2 u\|_{L^m}+\|u_t\|^{\frac{6-m}{2m}}_{L^2}\|\nabla u_t\|^{\frac{3m-6}{2m}}_{L^2}+\|\nabla u\|^{\frac{6(m-1)}{m}}_{L^2}.
 		\end{aligned}$$
 		Thanks to $\operatorname{div} u_t=0$, one has
 		$$\begin{aligned}
 		\|(-\Delta)^{-1}\operatorname{div}F\|_{L^2}\lesssim& \|(-\Delta)^{-1}\operatorname{div}((1-\rho)u_t)\|_{L^2}+\|\rho u\cdot\nabla u\|_{L^{\frac{6}{5}}}\\& \|(1-\rho)u_t\|_{L^{\frac{6}{5}}}+\|u\|_{3}\|\nabla u\|_{L^{2}}\\
 		\lesssim & \|1-\rho_0\|_{L^2}\|u_t\|^{\frac{1}{2}}_{L^2}\|\nabla u_t\|^{\frac{1}{2}}_{L^2}+\|u\|^{\frac{1}{2}}_{L^2}\|\nabla u\|^{\frac{3}{2}}_{L^2},
 		\end{aligned}$$
 		where we have used $L^{\frac{6}{5}}\hookrightarrow \dot{W}^{-1,2}$. Substituting the above inequality into (\ref{d76}) and taking $\eta>0$ sufficiently small, we arrive at
 		$$
 		\begin{aligned}
 		\|\nabla^2 u\|_{L^m}\lesssim \|\nabla u\|_{L^2}+\|u_t\|^{\frac{6-m}{2m}}_{L^2}\|\nabla u_t\|^{\frac{3m-6}{2m}}_{L^2}+\|\nabla u\|^{\frac{6(m-1)}{m}}_{L^2}+\|u_t\|^{\frac{1}{2}}_{L^2}\|\nabla u_t\|^{\frac{1}{2}}_{L^2}+\|u\|^{\frac{1}{2}}_{L^2}\|\nabla u\|^{\frac{3}{2}}_{L^2},
 		\end{aligned}
 		$$
 		and it is easy to observe that
 		\begin{equation}\label{d78}
 		\begin{aligned}
 		&\int^{T}_{t_{0}}\|\nabla^{2} u\|_{L^{m}}dt\lesssim \int^{T}_{t_{0}}\|\nabla u\|_{L^2}dt+\int^{T}_{t_{0}}\|u_t\|^{\frac{6-m}{2m}}_{L^2}\|\nabla u_t\|^{\frac{3m-6}{2m}}_{L^2}dt\\&+\int^{T}_{t_{0}}\|\nabla u\|^{\frac{6(m-1)}{m}}_{L^2}dt+\int^{T}_{t_{0}}\|u_t\|^{\frac{1}{2}}_{L^2}\|\nabla u_t\|^{\frac{1}{2}}_{L^2}dt+\int^{T}_{t_{0}}\|u\|^{\frac{1}{2}}_{L^2}\|\nabla u\|^{\frac{3}{2}}_{L^2}dt\stackrel{\text { def }}{=}\sum^{5}_{i=1} I_{i}.
 		\end{aligned}
 		\end{equation}
 		First, due to Corollary \ref{c-4.2} and $\delta_{-}>\frac{1}{2}$, we get
 		$$
 		\begin{aligned}
 		I_1\leq \|t^{\delta_{-}}\nabla u\|_{L^{2}([t_{0},T];L^{2})}\left(\int^{T}_{t_{0}}t^{-2\delta_{-}}dt\right)^{\frac{1}{2}}\leq C\mathcal{H}_{0}.
 		\end{aligned}
 		$$
 		Applying Lemma \ref{Lemma-5.5}, we arrive at
 		$$
 		\begin{aligned}
 		&I_2\leq \int^{T}_{t_{0}}\|t\partial_{t}u\|^{\frac{6-m}{2m}}_{L^{2}}\|t\nabla u_{t}\|^{\frac{3(m-2)}{2m}}_{L^{2}}\cdot t^{-1}dt
 		\leq \bigg(\sup_{t\in[t_{0},T]}\|t\partial_t u\|_{L^{2}}dt\bigg)^{\frac{6-m}{2m}}\\&\quad\times\bigg(\int^{T}_{t_{0}}\|t\nabla u_{t}\|^{2}_{L^{2}}dt\bigg)^{\frac{3(m-2)}{4m}}\cdot\bigg(\int^{T}_{t_{0}}t^{-\frac{4m}{m+6}}dt\bigg)^{\frac{6+m}{4m}}
 		\leq \|u(t_0)\|_{H^2}\exp\{C\mathcal{H}_{0}\}.
 		\end{aligned}
 		$$
 		Similarly, 
 		$$ 
 		I_3\leq \sup_{t\in[t_{0},T]}\|\nabla u\|^{\frac{4m-6}{m}}_{L^{2}}\cdot\int^{T}_{t_{0}}\|\nabla u\|^{2}_{L^{2}}dt\leq \exp\{C\mathcal{H}_{0}\}.
 		$$
 		The same estimate holds for $I_4$. Notice that
 		$$
 		\begin{aligned}
 		I_5\leq C\sup_{t\in[t_0,T]}\|t^\delta u\|^{\frac{1}{2}}_{L^{2}}\left(\int^{T}_{t_{0}}\|t^{\delta_{-}}\nabla u\|^{2}_{L^{2}}dt\right)^{\frac{3}{4}} \left(\int^{T}_{t_{0}}t^{-8\delta_{-}}dt\right)^{\frac{1}{4}}\leq C\mathcal{H}_{0}.
 		\end{aligned}
 		$$
 		Substituting the estimates of $I_1-I_5$ into the (\ref{d78}), we can deduce 
 		\begin{equation}\label{d79}
 		\begin{aligned}
 		\int^{T}_{t_{0}}\|\nabla^{2} u\|_{L^{m}}dt\leq (1+\|u(t_0)\|_{H^2})\exp\{C\mathcal{H}_{0}\}.
 		\end{aligned}
 		\end{equation}
 		
 		\noindent\textbf{Step 2.} The estimate of $\|\nabla u\|_{L^{1}([t_{0},T];L^n)}.$
 		
 		 By virtue of the Gagliardo-Nirenberg inequality, 
 		 \begin{equation}\label{dd64}
 		 \|t^{(1-\beta)\delta_{-}}\nabla u\|_{L^{2}([t_{0},T];L^{n})}\leq C\|\nabla u\|^{\beta}_{L^{2}([t_{0},T];L^{6})}\|t^{\delta_{-}}\nabla u\|^{1-\beta}_{L^{2}([t_{0},T];L^{2})}
 		 \end{equation}
 		 with
 		 $$
 		 \begin{aligned}\frac{1}{n}=\frac{\beta}{6}+\frac{1-\beta}{2}\quad{\rm or}\quad \beta=\frac{3(n-2)}{2n}.
 		 \end{aligned}
 		 $$
 		 Thanks to Corollary \ref{c-4.2}, we obtain
 		 \begin{equation}\label{dd63}
 		 \|t^{\delta_{-}}\nabla u\|^2_{L^{2}([t_{0},T];L^{2})}\leq C\mathcal{H}_{0},
 		 \end{equation}
 		 Thanks to $u=v+w$ and Proposition \ref{Proposition-5.1}, one has
 		 $$
 		 \int^{T}_{t_{0}}\|\nabla v\|^{2}_{L^{6}}dt\leq \int^{T}_{t_{0}} \|v\|^2_{\dot{B}^{\frac{1}{2}+\frac{3}{p}}_{p,1}}dt\lesssim \|u_0\|^2_{\dot{B}^{-1+\frac{3}{p}}_{p,1}},
 		 $$
 		 Combining (\ref{w1}) and (\ref{d52}), we infer that
 		\begin{equation}
 		\begin{aligned}
 		&\int^{T}_{t_{0}}\|\nabla w\|^{2}_{L^{6}}dt 
 		\lesssim \int^{T}_{t_{0}}\|\nabla^2 w\|^{2}_{L^{2}}dt \lesssim\int^{T}_{t_{0}}\|\partial_{t}w\|^{2}_{L^{2}}dt+\int^{T}_{t_{0}}\|\nabla w\|^{2}_{L^{2}}dt\\&+\sup_{t\in[t_{0},T]}\|\nabla w\|^{4}_{L^{2}}\int^{T}_{t_{0}}\|\nabla w\|^{2}_{L^{2}}dt+\int^{T}_{t_{0}}\|\Delta v\|^2_{L^\infty}dt+\int^{T}_{t_{0}}\|v\|^2_{\dot{B}^{s_1}_{p,1}}dt\lesssim \|u_0\|^2_{\dot{B}^{-1+\frac{3}{p}}_{p,1}},
 		\end{aligned}
 		\end{equation}
 		where we used the embedding inequality $\dot{H}^{1}\hookrightarrow L^{6}$ in the first inequality. Thus, we deduce that
 		\begin{equation}\label{dd65}
 		\int^{T}_{t_{0}}\|\nabla u\|^{2}_{L^{6}}dt\leq \int^{T}_{t_{0}}\|\nabla v\|^{2}_{L^{6}}dt+\int^{T}_{t_{0}}\|\nabla w\|^{2}_{L^{6}}dt\leq C\|u_0\|^2_{\dot{B}^{-1+\frac{3}{p}}_{p,1}}.
 		\end{equation}
 		Substituting (\ref{dd63}) and (\ref{dd65}) into (\ref{dd64}) and taking $n\in]2,\frac{6\delta_{-}}{1+\delta_{-}}[$ with $\delta\in]1/2,3/4[$, we get, by using H\"{o}lder's inequality, that
 		$$
 		\begin{aligned}
 		\|\nabla u\|_{L^{1}(L^{n})}&\leq C	\|t^{\frac{6-n}{2n}\delta_{-}}\nabla u\|_{L^{2}([t_{0},T];L^{n})}\left(\int^{T}_{t_{0}}t^{\frac{n-6}{n}\delta_{-}}dt\right)^{\frac{1}{2}}\\&\leq \|u_0\|^{\frac{3(n-2)}{2n}}_{\dot{B}^{-1+\frac{3}{p}}_{p,1}}\exp\{C\mathcal{H}_{0}\},
 		\end{aligned}
 		$$
 		which along with (\ref{d79}) yields
 		$$
 		\begin{aligned}
 		\|\nabla u\|_{L^{1}([t_{0},T];L^\infty)}&\leq\|\nabla u\|^{\alpha}_{L^{1}([t_{0},T];L^{n})}\|\nabla^{2} u\|^{1-\alpha}_{L^{1}([t_{0},T];L^{m})}\\&\leq \|u_{0}\|^{\frac{3\alpha(n-2)}{2n}}_{\dot{B}^{-1+\frac{3}{p}}_{p,1}}\left(1+\|u(t_0)\|_{H^2}\right)\exp\{C\mathcal{H}_{0}\}.
 		\end{aligned}
 		$$
 		This completes the proof of Lemma \ref{Lemma-5.6}.
 	\end{proof}
 	
 	 With Lemma \ref{Lemma-u1}-\ref{Lemma-5.6} at hand, we are in a position to prove the following Proposition.
 
 \begin{proof}[Proof of  Proposition\ref{Proposition-5.5}]
 	Since $\mu(\rho)$ satisfies
 	\begin{equation}\label{d83}
 	\partial_{t}(\mu(\rho))+u\cdot\nabla \mu(\rho)=0,
 	\end{equation}
 	standard caclulations show that
 	\begin{equation}\label{d84}
 	\frac{d}{dt}\|\nabla\mu(\rho)\|_{L^{r}}\leq r \|\nabla u\|_{L^{\infty}}\|\nabla\mu(\rho)\|_{L^{r}},
 	\end{equation}
 	which together with Gronwall's inequality and (\ref{d75}) yields
 	\begin{equation}\label{d85}
 	\begin{aligned}
 	\sup_{t\in[0, T]}\|\nabla\mu(\rho)\|_{L^{r}}
 	\leq&\|\nabla \mu(\rho_0)\|_{L^{r}}\exp{\{r\int^{t_0}_{0}\|\nabla u\|_{L^{\infty}}dt+r\int^{T}_{t_0}\|\nabla u\|_{L^{\infty}}dt\}}\\
 	\leq& \|\nabla\mu(\rho_0)\|_{L^{r}}\exp\left\{C\|u_0\|_{\dot{B}^{-1+\frac{3}{p}}_{p,1}}+\overline{C}\|u_{0}\|^{\frac{3\alpha(n-2)}{2n}}_{\dot{B}^{-1+\frac{3}{p}}_{p,1}}\exp\{C\mathcal{H}_{0}\}\right\}.
 	\end{aligned}
 	\end{equation}
 	Hence, one has
 	$$\sup_{t\in[t_{0}, T]}\|\nabla\mu(\rho)\|_{L^{r}}\leq 2\|\nabla\mu(\rho_0)\|_{L^{r}},$$
 	provided that
 	\begin{equation}\label{d86}
 	\|u_0\|_{\dot{B}^{-1+\frac{3}{p}}_{p,1}}\leq\varepsilon_{2}\quad{\rm and}\quad\varepsilon_{2}+\overline{C}\varepsilon^{\frac{3\alpha(n-2)}{2n}}_{2}\exp\{\mathcal{H}_0\}\overset{\text{def}}{=}C^{-1}\ln 2.
 	\end{equation}
 	Similarly, we have
 		$$\sup_{t\in[t_{0}, T]}\|\nabla\mu(\rho)\|_{L^{3}}\leq 2\|\nabla\mu(\rho_0)\|_{L^{3}}\leq 2\|\mu^{\prime}(\cdot)\|_{L^\infty}\|\nabla\rho_0\|_{L^3}\leq C\|\rho_0-1\|_{\dot{B}^{\frac{3}{q}}_{q,1}}.$$
 	\par Choosing $\varepsilon=\min\{1,\varepsilon_{0},\varepsilon_{1},\varepsilon_{2}\},$ we directly obtain (\ref{d40}) from (\ref{d84})-(\ref{d86}). The proof of Proposition \ref{Proposition-5.5} is finished.
 \end{proof}
 \subsection{Proof of Theorem \ref{Theorem-1}}
 	
 We first rewrite the momentum equation in $(\ref{a1})_{2}$ as 
\begin{equation}\label{d87}
\partial_{t}u+u\cdot\nabla u-\mu(\rho)\Delta u+\nabla\pi=(1-\rho)(\partial_t u+u\cdot\nabla u)+2\nabla\mu(\rho)\mathbb{D}u.
\end{equation}
Applying the operator $\dot{\Delta}_{j}\mathbb{P}$ to (\ref{d87}) gives
\begin{equation}\label{d88}
\begin{aligned}
&\partial_{t}\dot{\Delta}_{j}u+u\cdot\nabla \dot{\Delta}_{j}u-\operatorname{div}\{\mu(\rho)\dot{\Delta}_{j}\nabla u\}\\=&\dot{\Delta}_j\mathbb{P}\{(1-\rho)(\partial_t u+u\cdot\nabla u)\} +[u\cdot\nabla;\dot{\Delta}_{j}\mathbb{P}]u\\&-[\mu(\rho);\dot{\Delta}_{j}\mathbb{P}]\Delta u-\nabla\mu(\rho)\cdot\dot{\Delta}_{j}\nabla u+\dot{\Delta}_{j}\mathbb{P}(2\nabla\mu(\rho)\mathbb{D}u).
\end{aligned}
\end{equation}
Due to $\operatorname{div}u=0$ and $\mu(\rho)\geq \underline{\mu}$ by multiplying (\ref{d88}) by $|\dot{\Delta}_{j}u|^{p-2}\dot{\Delta}_{j}u$ and then integrating the resulting equality over $\mathbb{R}^{3}$, we obtain
\begin{equation}\label{d89}
\begin{aligned}
\frac{d}{dt}\|\dot{\Delta}_{j} u\|_{L^{p}}+&2^{2j}\|\dot{\Delta}_{j} u\|_{L^{p}}\lesssim \|\dot{\Delta}_j\{(1-\rho)(\partial_t u+u\cdot\nabla u)\}\|_{L^p}\\&+\|[u\cdot\nabla;\dot{\Delta}_{j}\mathbb{P}]u\|_{L^{p}}+\|[\mu(\rho);\dot{\Delta}_{j}\mathbb{P}]\Delta u\|_{L^{p}}\\&+\|\nabla\mu(\rho)\cdot\dot{\Delta}_{j}\nabla u\|_{L^p}+\|\dot{\Delta}_{j}\mathbb{P}(2\nabla\mu(\rho)\mathbb{D}u)\|_{L^p}.
\end{aligned}
\end{equation} 
Intergrating the above inequality over $[t_0,T]$ and multiplying (\ref{d89}) by $2^{j(-1+\frac{3}{p})}$, then summing up the resulting inequality over $j\in\mathbb{Z},$ we achieve 
\begin{equation}\label{d90}
\begin{aligned}
&\|u\|_{\widetilde{L}^{\infty}([t_{0},T];\dot{B}^{-1+\frac{3}{p}}_{p,1})}+\|u\|_{L^{1}([t_{0},T];\dot{B}^{1+\frac{3}{p}}_{p,1})}\\
\lesssim &\|u(t_{0})\|_{\dot{B}^{-1+\frac{3}{p}}_{p,1}}+\|(1-\rho)(\partial_t u+u\cdot\nabla u)\|_{L^{1}([t_{0},T];\dot{B}^{-1+\frac{3}{p}}_{p,1})}\\&+ \sum_{j\in\mathbb{Z}}2^{j(-1+\frac{3}{p})}\|[u\cdot\nabla;\dot{\Delta}_{j}\mathbb{P}]u\|_{L^{1}([t_{0},T];L^{p})}+\sum_{j\in\mathbb{Z}}2^{j(-1+\frac{3}{p})}\|[\mu(\rho);\dot{\Delta}_{j}\mathbb{P}]\Delta u\|_{L^{1}([t_{0},T];L^{p})}\\&+\sum_{j\in\mathbb{Z}}2^{j(-1+\frac{3}{p})}\|\nabla\mu(\rho)\cdot\dot{\Delta}_{j}\nabla u\|_{L^{1}([t_{0},T];L^{p})}+\|\nabla\mu(\rho)\mathbb{D}u\|_{L^{1}([t_{0},T];\dot{B}^{-1+\frac{3}{p}}_{p,1})}.
\end{aligned}
\end{equation} 
\par In what follows,~we shall deal with the right-hand side of~(\ref{d90}). Applying Lemma \ref{Lemma-2.2} gives
$$
\begin{aligned}
&\|(1-\rho)(\partial_t u+u\cdot\nabla u)\|_{L^{1}([t_{0},T];\dot{B}^{-1+\frac{3}{p}}_{p,1})}\\
\lesssim& \|1-\rho\|_{L^\infty([t_0,T];\dot{B}^{\frac{3}{p}}_{p,1})}\|\partial_t u+u\cdot\nabla u\|_{L^{1}([t_{0},T];\dot{B}^{-1+\frac{3}{p}}_{p,1})}.
\end{aligned}$$
Yet thanks to Lemma \ref{Lemma-5.5}, one has
$$
\begin{aligned}
\|\partial_t u\|_{L^{1}([t_{0},T];\dot{B}^{-1+\frac{3}{p}}_{p,1})}
&\leq  \sup_{t\in[t_0,T]}\|t u_t\|^{\frac{1}{2}}_{L^2}(\int^{T}_{t_0}\|t\nabla u_t\|^{2}_{L^2}dt)^{\frac{1}{4}}(\int^{T}_{t_0}t^{-\frac{4}{3}}dt)^{\frac{3}{4}}\\
&\leq\|u(t_0)\|_{H^2} \exp\{C\mathcal{H}_{0}\},
\end{aligned}
$$
and
$$
\begin{aligned}
\|u\cdot\nabla u\|_{L^{1}([t_{0},T];\dot{B}^{-1+\frac{3}{p}}_{p,1})}\lesssim \int^{T}_{t_0}(\|\nabla w\|_{L^2}\|\Delta w\|_{L^2}+\|v\|^{2}_{\dot{B}^{\frac{3}{p}}_{p,1}})dt\lesssim \|u_0\|^2_{\dot{B}^{-1+\frac{3}{p}}_{p,1}}.
\end{aligned}
$$
By virtue of Lemma \ref{Lemma-2.4} and (\ref{d79}) with $m=3$, one has
\begin{equation}\label{d96}
\begin{aligned}
\|1-\rho\|_{\widetilde{L}_{T}^{\infty}(\dot{B}^{\frac{3}{2}}_{2,1})}&\leq\|1-\rho(t_0)\|_{\dot{B}^{\frac{3}{2}}_{2,1}}\exp\left\{C\|\nabla u\|_{\dot{B}^{1}_{3,\infty}\cap L^\infty}\right\}\\
&\leq \|1-\rho(t_0)\|_{\dot{B}^{\frac{3}{2}}_{2,1}}\exp\left\{\overline{C}\exp\{C\mathcal{H}_{0}\}\right\}.
\end{aligned}
\end{equation}
From which, we can deduce that
\begin{equation}\label{d92}
\begin{aligned}
&\|(1-\rho)(\partial_t u+u\cdot\nabla u)\|_{L^{1}([t_{0},T];\dot{B}^{-1+\frac{3}{p}}_{p,1})}\\
\leq& \|1-\rho(t_0)\|_{\dot{B}^{\frac{3}{2}}_{2,1}}\exp\left\{\overline{C}\exp\{C\mathcal{H}_{0}\}\right\}.
\end{aligned}
\end{equation}
Similarly, by virtue of Lemma \ref{Lemma-2.3}, 
\begin{equation}\label{d93}
\sum_{j\in\mathbb{Z}}2^{j(-1+\frac{3}{p})}\|[u\cdot\nabla;\dot{\Delta}_{j}\mathbb{P}]u\|_{L^{1}([t_{0},T];L^{p})}\lesssim\int^{T}_{t_{0}} \|\nabla u\|_{L^{\infty}}\|u\|_{\dot{B}^{-1+\frac{3}{p}}_{p,1}}dt.
\end{equation}
For $[\mu(\rho);\dot{\Delta}_{j}\mathbb{P}]\Delta u,$ homogenous Bony's decomposition implies
$$
[\mu(\rho);\dot{\Delta}_{j}\mathbb{P}]\Delta u=[T_{\mu(\rho)};\dot{\Delta}_{j}\mathbb{P}]\Delta u+T^{'}_{\dot{\Delta}_{j}\Delta u}\mu(\rho)-\dot{\Delta}_{j}\mathbb{P}(T_{\Delta u}\mu(\rho))-\dot{\Delta}_{j}\mathbb{P}\mathcal{R}(\Delta u,\mu(\rho)).$$
It follows again from the above estimate,~which implies that
$$
\begin{aligned}
&\sum_{j\in\mathbb{Z}}2^{j(-1+\frac{3}{p})}\|[T_{\mu(\rho)};\dot{\Delta}_{j}\mathbb{P}]\Delta u\|_{L^{p}}\\
\lesssim&\sum_{j\in\mathbb{Z}}\sum_{|j-k|\leq 4}2^{j(-2+\frac{3}{p})}\|\nabla\dot{S}_{k-1}\mu(\rho)\|_{L^{\infty}}\|\dot{\Delta}_{k}\Delta u\|_{L^{p}}\\
 \lesssim&\sum_{j\in\mathbb{Z}}\sum_{|j-k|\leq 4}2^{(j-k)(-2+\frac{3}{p})}\|\nabla \mu(\rho)\|_{L^{r}}2^{k(-2+\frac{3}{p}+\frac{3}{r})}\|\dot{\Delta}_{k}\Delta u\|_{L^{p}}\\
 \lesssim& \|\nabla \mu(\rho)\|_{L^{r}}\|u\|_{\dot {B}^{\frac{3}{p}+\frac{3}{r}}_{p,1}}.
\end{aligned}
$$
Similarly, one has
$$
\begin{aligned}
&\sum_{j\in\mathbb{Z}}2^{j(-1+\frac{3}{p})}\|T^{'}_{\dot{\Delta}_{j}\Delta u} \mu(\rho)\|_{L^{p}}\\
\lesssim&\sum_{j\in\mathbb{Z}}\sum_{k\geq j-2}2^{j(-1+\frac{3}{p})}\|\dot{S}_{k+2}\dot{\Delta}_{j}\Delta u\|_{L^{p}}\|\dot{\Delta}_{k} \mu(\rho)\|_{L^{\infty}}\\
\lesssim&\sum_{j\in\mathbb{Z}}2^{j(-2+\frac{3}{p}+\frac{3}{r})}\|\dot{\Delta}_{j}\Delta u\|_{L^{p}}\sum_{k\geq j-2}2^{(j-k)(1-\frac{3}{r})}\|\dot{\Delta}_{k}\nabla \mu(\rho)\|_{L^{r}}\\
\lesssim &\|u\|_{\dot {B}^{\frac{3}{p}+\frac{3}{r}}_{p,1}}\|\nabla \mu(\rho)\|_{L^{r}}.
\end{aligned}
$$
For the term $\dot{\Delta}_{j}\mathbb{P}(T_{\Delta u} \mu(\rho))$, we split the estimate into two cases. For the first case that $p<r$, we get
$$
\begin{aligned}
&\sum_{j\in\mathbb{Z}}2^{j(-1+\frac{3}{p})}\|\dot{\Delta}_{j}\mathbb{P}(T_{\Delta u} \mu(\rho))\|_{L^{p}}\\
\lesssim&\sum_{j\in\mathbb{Z}}\sum_{|k-j|\leq 4}2^{(j-k)(-1+\frac{3}{p})}2^{k(-2+\frac{3}{p})}\|\dot{S}_{k-1}\Delta u\|_{L^{p^{*}}}2^{k}\|\dot{\Delta}_{k}\mu(\rho)\|_{L^{r}}\\
\lesssim &\|u\|_{\dot {B}^{\frac{3}{p}+\frac{3}{r}}_{p,1}}\|\nabla \mu(\rho)\|_{L^{r}},
\end{aligned}
$$
where $\frac{1}{p}=\frac{1}{p^{*}}+\frac{1}{r}$ and we have used the embedding inequality $\dot {B}^{\frac{3}{p}+\frac{3}{r}}_{p,1}\hookrightarrow\dot{B}^{\frac{3}{p}}_{p^{*},1}$ in the second inequality. Along the same line, for the case that $p\geq r>3,$ we have
$$
\begin{aligned}
&\sum_{j\in\mathbb{Z}}2^{j(-1+\frac{3}{p})}\|\dot{\Delta}_{j}\mathbb{P}(T_{\Delta u} \mu(\rho))\|_{L^{p}}\\
\lesssim&\sum_{j\in\mathbb{Z}}\sum_{|k-j|\leq 4}2^{(j-k)(-1+\frac{3}{p})}2^{k(-1+\frac{3}{p})}\|\dot{S}_{k-1}\Delta u\|_{L^{p}}\|\dot{\Delta}_{k}\mu(\rho)\|_{L^{\infty}}\\
\lesssim&\sum_{j\in\mathbb{Z}}\sum_{|k-j|\leq 4}2^{(j-k)(-1+\frac{3}{p})}2^{k(-2+\frac{3}{p}+\frac{3}{r})}\|\dot{S}_{k-1}\Delta u\|_{L^{p}}\|\dot{\Delta}_{k}\nabla\mu(\rho)\|_{L^{r}}\\
\lesssim &\|u\|_{\dot {B}^{\frac{3}{p}+\frac{3}{r}}_{p,1}}\|\nabla \mu(\rho)\|_{L^{r}}.
\end{aligned}
$$
The same estimate holds for $\dot{\Delta}_{j}\mathbb{P}\mathcal{R}(\Delta u,\mu(\rho)),$ note that $r<9,$
$$
\begin{aligned}
\sum_{j\in\mathbb{Z}}2^{j(-1+\frac{3}{p})}\|\dot{\Delta}_{j}\mathbb{P}\mathcal{R}(\Delta u,\mu(\rho))\|_{L^{p}}
\lesssim&\sum_{j\in\mathbb{Z}}2^{j(-1+\frac{3}{p}+\frac{3}{r})}\|\dot{\Delta}_{j}\mathcal{R}(\Delta u,\mu(\rho))\|_{L^{r^{*}}}\\
\lesssim&\sum_{j\in\mathbb{Z}}\sum_{k\geq j-3}2^{j(-1+\frac{3}{p}+\frac{3}{r})}\|\dot{\Delta}_{k}\Delta u\|_{L^{p}}\|\dot{\Delta}_{k}\mu(\rho)\|_{L^{r}}\\
\lesssim &\|u\|_{\dot {B}^{\frac{3}{p}+\frac{3}{r}}_{p,1}}\|\nabla \mu(\rho)\|_{L^{r}},
\end{aligned}
$$
where $\frac{1}{r^{*}}=\frac{1}{p}+\frac{1}{r}.$ Hence, we deduce that
\begin{equation}\label{d94}
\begin{aligned}
&\sum_{j\in\mathbb{Z}}2^{j(-1+\frac{3}{p})}\|[\mu(\rho);\dot{\Delta}_{j}\mathbb{P}]\Delta u\|_{L^{1}([t_{0},T];L^{p})} \lesssim\int^{T}_{t_{0}}\|u\|_{\dot {B}^{\frac{3}{p}+\frac{3}{r}}_{p,1}}\|\nabla \mu(\rho)\|_{L^{r}}dt.
\end{aligned}
\end{equation}
Along the same line, one has
$$
\begin{aligned}
&\sum_{j\in\mathbb{Z}}2^{j(-1+\frac{3}{p})}\|\nabla\mu(\rho)\cdot\dot{\Delta}_{j}\nabla u\|_{L^{1}([t_{0},T];L^{p})}+\|\nabla\mu(\rho)\mathbb{D}u\|_{L^{1}([t_{0},T];\dot{B}^{-1+\frac{3}{p}}_{p,1})}\\
\lesssim&\int^{T}_{t_{0}}\|u\|_{\dot {B}^{\frac{3}{p}+\frac{3}{r}}_{p,1}}\|\nabla \mu(\rho)\|_{L^{r}}dt+\int^{T}_{t_{0}}\|\nabla u\|_{L^\infty}\|1-\rho\|_{\dot {B}^{\frac{3}{2}}_{2,1}}dt.
\end{aligned}
$$
Applying interpolation inequality $\|u\|_{\dot {B}^{\frac{3}{p}+\frac{3}{r}}_{p,1}}\lesssim \|u\|^{\frac{r-3}{2r}}_{\dot{B}^{-1+\frac{3}{p}}_{p,1}}\|u\|^{\frac{r+3}{2r}}_{\dot{B}^{1+\frac{3}{p}}_{p,1}}$ and Lemma \ref{Lemma-5.6}, we arrive at
\begin{equation}\label{d95}
\begin{aligned}
&\sum_{j\in\mathbb{Z}}2^{j(-1+\frac{3}{p})}\|\nabla\mu(\rho)\cdot\dot{\Delta}_{j}\nabla u\|_{L^{1}([t_{0},T];L^{p})}+\|\nabla\mu(\rho)\mathbb{D}u\|_{L^{1}([t_{0},T];\dot{B}^{-1+\frac{3}{p}}_{p,1})}\\
\lesssim&\int^{T}_{t_{0}}\|u\|^{\frac{r-3}{2r}}_{\dot{B}^{-1+\frac{3}{p}}_{p,1}}\|u\|^{\frac{r+3}{2r}}_{\dot{B}^{1+\frac{3}{p}}_{p,1}}\|\nabla \mu(\rho)\|_{L^{r}}dt+\|1-\rho(t_0)\|_{\dot{B}^{\frac{3}{2}}_{2,1}}\exp\left\{\overline{C}\exp\{C\mathcal{H}_{0}\}\right\}.
\end{aligned}
\end{equation}
\par Plugging (\ref{d92}) (\ref{d93}) (\ref{d94}) and (\ref{d95}) into (\ref{d90}) and using Young's inequality, we obtain
\begin{equation}
\begin{aligned}
\|u&\|_{\widetilde{L}^{\infty}([t_{0},T];\dot{B}^{-1+\frac{3}{p}}_{p,1})}+\|u\|_{L^{1}([t_{0},T];\dot{B}^{1+\frac{3}{p}}_{p,1})}\leq\|u(t_0)\|_{\dot{B}^{-1+\frac{3}{p}}_{p,1}}+\int^{T}_{t_{0}} \|\nabla u\|_{L^{\infty}}\|u\|_{\dot{B}^{-1+\frac{3}{p}}_{p,1}}dt\\
&\quad+\int^{T}_{t_{0}}\|u\|_{\dot{B}^{-1+\frac{3}{p}}_{p,1}}\|\nabla \mu(\rho)\|^{\frac{2r}{r-3}}_{L^{r}}dt+\|1-\rho(t_0)\|_{\dot{B}^{\frac{3}{2}}_{2,1}}\exp\left\{\overline{C}\exp\{C\mathcal{H}_{0}\}\right\},
\end{aligned}
\end{equation}
which together with Gronwall's inequality yields 
\begin{equation}\label{d97}
\begin{aligned}
&\|u\|_{\widetilde{L}^{\infty}([t_{0},T];\dot{B}^{-1+\frac{3}{p}}_{p,1})}+\|u\|_{L^{1}([t_{0},T];\dot{B}^{1+\frac{3}{p}}_{p,1})}\\
\leq&\left(\|u(t_0)\|_{\dot{B}^{-1+\frac{3}{p}}_{p,1}}+\|1-\rho(t_0)\|_{\dot{B}^{\frac{3}{2}}_{2,1}}\right)\exp\left\{\overline{C}\{1+T\|\nabla \mu(\rho_0)\|^{\frac{2r}{r-3}}_{L^{r}}\}\exp\left\{C\mathcal{H}_{0}\right\}\right\}.
\end{aligned}
\end{equation} 
The similar estimates hold for $\nabla\pi$ and $u_t.$ Hence, we completes the proof of Theorem \ref{Theorem-1}.

\end{document}